\theoremstyle{plain}
\newtheorem{thm}{Theorem}[section]
\newtheorem{lm}[thm]{Lemma}
\newtheorem{cor}[thm]{Corollary}
\newtheorem{prop}[thm]{Proposition}
\newtheorem{conj}[thm]{Conjecture}
\newtheorem{lem}[thm]{Lemma}
\theoremstyle{definition}
\newtheorem{de}[thm]{Definition}
\newtheorem{ex}[thm]{Example}
\newtheorem{re}[thm]{Remark}
\newcommand{\CC}{{\mathbb C}}
\newcommand{\RR}{{\mathbb R}}
\newcommand{\ZZ}{{\mathbb Z}}
\newcommand{\NN}{{\mathbb N}}
\newcommand{\im}{\operatorname{im}}
\newcommand{\supp}{\operatorname{supp}}
{\begin{figure} \begin{center}}%
{\end{center} \end{figure}}
\newcommand{\id}{\operatorname{id}}
\newcommand{\rk}{\operatorname{rk}}
\newcommand{\bu}{\mathbf{u}}
\newcommand{\la}{\langle}
\newcommand{\ra}{\rangle}
\newcommand{\Inc}{\operatorname{Inc}}
\newcommand{\Subs}{\operatorname{Subs}}
\newcommand{\GL}{\operatorname{GL}\nolimits}
\newcommand{\lieg}[1]{\mathrm{#1}}
\newcommand{\ot}{\leftarrow}
\newcommand{\Ainf}{A_\infty}
\newcommand{\Tinf}{T_\infty}
\newcommand{\Ginf}{G_\infty}
\newcommand{\Hinf}{H_\infty}
\newcommand{\Binf}{B_\infty}
\newcommand{\Dinf}{D_\infty}
\newcommand{\Sinf}{S_\infty}
\newcommand{\Xinf}[1][k]{X_\infty^{\leq #1}}
\newcommand{\Xp}[2][k]{X_{#2}^{\leq #1}}
\newcommand{\Yinf}[1][k]{Y_\infty^{\leq #1}}
\newcommand{\Yp}[2][k]{Y_{#2}^{\leq #1}}
\newcommand{\bw}{\mathbf{w}}
\newcommand{\bbb}{\mathfrak b}
\begin{document}

\begin{abstract}
Matrices of rank at most $k$ are defined by the vanishing of
polynomials of degree $k+1$ in their entries (namely, their $(k+1) \times
(k+1)$-subdeterminants), regardless of the size of the matrix. We prove a
qualitative analogue of this statement for tensors of arbitrary dimension,
where matrices correspond to two-dimensional tensors. More specifically,
we prove that for each $k$ there exists an upper bound $d=d(k)$ such
that tensors of {\em border rank} at most $k$ are defined by the vanishing
of polynomials of degree at most $d$, regardless of the dimension of
the tensor and regardless of its sizes in each dimension. Our proof
involves passing to an infinite-dimensional limit of tensor powers of
a vector space, whose elements we dub {\em
infinite-dimensional tensors},
and exploiting the symmetries of this limit in crucial way.
\end{abstract}

\title{Bounded-rank tensors are\\ defined in bounded degree}
\author[J.~Draisma]{Jan Draisma}
\address[Jan Draisma]{
Department of Mathematics and Computer Science\\
Technische Universiteit Eindhoven\\
P.O. Box 513, 5600 MB Eindhoven, The Netherlands\\
and Centrum voor Wiskunde en Informatica, Amsterdam,
The Netherlands}
\thanks{The first author is supported by a Vidi grant from
the Netherlands Organisation for Scientific Research (NWO)}
\email{j.draisma@tue.nl}

\author[J.~Kuttler]{Jochen Kuttler}
\address[Jochen Kuttler]{
Department of Mathematical and Statistical Sciences\\
632 Central Academic Building\\
University of Alberta\\
Edmonton, Alberta T6G 2G1,
CANADA}
\email{jochen.kuttler@ualberta.ca}

\maketitle

\tableofcontents

\section{Introduction}
Tensor rank is an important notion in such diverse
areas as multilinear algebra and algebraic geometry
\cite{Catalisano2002,Catalisano05b,Landsberg06}; algebraic statistics
\cite{Allman09,Garcia05}, in particular with applications to
phylogenetics \cite{Allman04,Casanellas05,Casanellas09,Sturmfels05b},
and complexity theory
\cite{Buergisser09,Buergisser97,Draisma09f,Landsberg06b,Landsberg08,Landsberg09,
Strassen69,Strassen83}. We will prove a fundamental, qualitative
property of tensor rank that we believe could have substantial impact
on theoretical questions in these areas. On the other hand, a future
computational counterpart to our theory may also lead to practical use
of the aforementioned property.

To set the stage, let $p,n_1,\ldots,n_p$ be natural numbers and let $K$
be an infinite field. An $n_1 \times \ldots \times n_p$-tensor $\omega$
(or $p$-tensor, for short) over $K$ is a $p$-dimensional array of scalars
in $K$. It is said to be {\em pure} if there exist vectors $x_1 \in
K^{n_1},\ldots,x_p \in K^{n_p}$ such that $\omega(i_1,\ldots,i_p)=x_1(i_1)
\cdots x_p(i_p)$ for all relevant indices. The {\em rank} of a general
tensor $\omega$ is the minimal number of terms in any expression of
$\omega$ as a sum of pure tensors. When $p$ equals $2$, this notion
of rank coincides with that of matrix rank. As a consequence, linear
algebra provides an alternative characterisation of rank for $2$-tensors:
the $2$-tensors of rank at most $k$ are precisely those for which all
$(k+1)\times(k+1)$-subdeterminants vanish. When $p$ is larger than $2$,
no such clean alternative characterisation seems to exist. One immediate
reason for this is that having rank at most $k$ is typically not a
Zariski-closed condition, that is, not equivalent to the vanishing of
some polynomials; the following classical example illustrates this.

\begin{ex}
Take $K=\CC$, $p=3$, and $n_1=n_2=n_3=2$. A tensor
$\omega$ defines a linear map $\phi$ from $\CC^2$ to the space $M_2(\CC)$
of $2 \times 2$-matrices over $\CC$ by
\[
\phi:(x_1,x_2) \mapsto
\begin{bmatrix}
	x_1\omega(1,1,1)+x_2\omega(1,1,2) &
	x_1\omega(1,2,1)+x_2\omega(1,2,2) \\
	x_1\omega(2,1,1)+x_2\omega(2,1,2) &
	x_1\omega(2,2,1)+x_2\omega(2,2,2)
\end{bmatrix}. \]
If $\im \phi$ is zero, then $\omega$ has rank $0$. If $\im \phi$
is one-dimensional, then the rank of $\omega$ equals the rank of any
non-zero matrix in $\im \phi$. If $\im \phi$ is spanned by two linearly
independent matrices of rank $1$, then the rank of $\omega$ is two. This
is the ``generic'' situation. In the remaining case, the rank is $3$. The
$3$-tensors of the latter type are contained and dense in the zero
set of Cayley's {\em hyperdeterminant}. This quartic polynomial
in the entries of $\omega$ equals the discriminant of determinant of
$\phi(x)$, seen as a homogeneous quadratic polynomial in $x_1,x_2$ with
coefficients that are homogeneous quadratic polynomials in the entries of
$\omega(i,j,k)$. Hence the set of tensors of rank at most $2$ is dense
in all $2 \times 2 \times 2$-tensors, and its complement is non-empty
and locally closed. Moreover, working over $\RR$ instead of $\CC$, the
tensors whose hyperdeterminant is negative also have rank $3$.
This serves to show that tensor rank is a delicate
notion. 
\end{ex}

This situation leads to the notion of {\em border rank} of a tensor
$\omega$, which is the minimal $k$ such that all polynomials that vanish
identically on the set of tensors of rank at most $k$ also vanish on
$\omega$. In other words, $\omega$ has border rank at most $k$ if it
lies in the Zariski-closure of the set of tensors of rank at most $k$.
Thus all $2 \times 2 \times 2$-tensors over $\CC$ have border rank
at most $2$, and the same is true over $\RR$. Furthermore, over $\CC$
the tensors of border rank at most $k$ is also the ordinary (Euclidean)
closure of the set of tensors of rank at most $k$. This explains the
term border rank: tensors of border rank at most $k$ are those that can
be approximated arbitrarily well by tensors of rank at most $k$.

There is an important situation where rank and border rank do agree, and
that is the case of rank at most one, i.e., pure tensors. These are always
defined by the vanishing of all $2 \times 2$-subdeterminants of {\em
flattenings} of the $p$-tensor $\omega$ into matrices.  Such a flattening
is obtained by partitioning the $p$ dimensions $n_1,\ldots,n_p$ into two
sets, for example $n_1,\ldots,n_q$ and $n_{q+1},\ldots,n_p$, and viewing
$\omega$ as a $(n_1 \cdots n_q) \times (n_{q+1}\cdots n_p)$-matrix. In
fact, it is well known that the $2 \times 2$-subdeterminants of all
flattenings even generate the ideal of all polynomials vanishing on
pure tensors.

Among the most advanced results on tensor rank is the fact that a
$p$-tensor has border rank at most $2$ if and only if all $3 \times
3$-subdeterminants of all its flattenings have rank at most $2$
\cite{Landsberg04}; the {\em GSS-conjecture} in \cite{Garcia05} that these
subdeterminants generate the actual ideal has recently been proved
by Raicu \cite{Raicu10}. In any case, having border rank at most $2$
is a property defined by polynomials of degree $3$, regardless of the
dimension of the tensor. Replacing $2$ by $k$ and $3$ by ``some degree
bound'', we arrive the statement of our first main theorem, to which
the title of this paper refers.

\begin{thm}[Main Theorem I]
For fixed $k \in \NN$, there exists a $d \in \NN$ such that for all $p \in
\NN$ and $n_1,\ldots,n_p \in \NN$ the set of $n_1 \times \ldots \times
n_p$ tensors over $K$ of border rank at most $k$ is defined by the vanishing
of a number of polynomials of degree at most $d$.
\end{thm}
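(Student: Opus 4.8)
The theorem asks only for \emph{set-theoretic} defining equations, so at no point do we need to control the full defining ideal (the analogue of the GSS conjecture for general $k$ would be much harder, and presumably is open). Fix a vector space $V$ with countable basis $\be_1,\be_2,\dots$ and form a limit $\Tinf$ of the tensor spaces $V^{\otimes p}$ --- an ind-scheme of \emph{infinite-dimensional tensors} --- into which every finite tensor space $K^{n_1}\otimes\cdots\otimes K^{n_p}$ embeds as a closed subspace: identify $K^{n_i}$ with $\la\be_1,\dots,\be_{n_i}\ra\subseteq V$, and pad any missing legs with the vector $\be_1$, an operation that changes neither rank nor border rank. Using coordinate projections on the factors one checks that this embedding both preserves and reflects border rank. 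Let $\Ainf$ be the coordinate ring of $\Tinf$, a polynomial ring whose variables are indexed by finite sequences of positive integers, equipped for every format with a surjective grading-preserving restriction homomorphism onto $K[K^{n_1}\otimes\cdots\otimes K^{n_p}]$ (killing the variables with the wrong length or an out-of-range index). A large monoid $\Ginf$ of grading-preserving algebra operators acts on $\Ainf$, generated by $\GL(V)$ acting on each tensor leg separately, by the symmetric group $\Sinf$ permuting legs, and by the leg-adjunction maps, and it is compatible under the restriction homomorphisms with the finite symmetry groups $\GL_{n_1}\times\cdots\times\GL_{n_p}\rtimes S_p$. Finally let $\Xinf\subseteq\Tinf$ be the closure of the set of infinite tensors of rank at most $k$; it is $\Ginf$-stable, and, by the preserve-and-reflect property above, its restriction to each finite format is exactly the border-rank-$\le k$ variety $\Xp{n_1,\ldots,n_p}$.

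\textbf{Reduction to equivariant finite generation.} It suffices to show that $\sqrt{I(\Xinf)}$ is the radical of an ideal generated by finitely many $\Ginf$-orbits $\Ginf\cdot f_1,\dots,\Ginf\cdot f_m$ of homogeneous polynomials. Granting this, set $d=d(k):=\max_j\deg f_j$. For any format, the restrictions of the $\Ginf$-translates $\gamma f_j$ are polynomials of degree $\le d$ on $K^{n_1}\otimes\cdots\otimes K^{n_p}$, all vanishing on $\Xp{n_1,\ldots,n_p}$ because $\gamma f_j$ vanishes on $\Xinf$, which contains the image of that variety. Conversely, if $\omega$ in that space has border rank exceeding $k$, then its image $\bar\omega$ lies outside $\Xinf$ since the embedding reflects border rank; choosing $g\in I(\Xinf)$ with $g(\bar\omega)\ne0$ and using the finite-generation statement, some power of $g$ lies in the displayed ideal, so some $\gamma f_j$ satisfies $(\gamma f_j)(\bar\omega)\ne0$ and the corresponding degree-$\le d$ polynomial on the finite space is nonzero at $\omega$. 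Hence the finite-dimensional space of degree-$\le d$ equations produced this way cuts out $\Xp{n_1,\ldots,n_p}$ set-theoretically, uniformly in $p$ and $n_1,\dots,n_p$, which is Main Theorem I. The routine points left to check are that the restriction maps behave as described and that closure commutes with restriction to the finite subspaces; both use only that $K$ is infinite.

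\textbf{The engine, and the main obstacle.} The equivariant finite-generation statement follows once we know that $\Ainf$ is \emph{topologically $\Ginf$-Noetherian}: the $\Ginf$-stable Zariski-closed subsets of $\operatorname{Spec}\Ainf$ satisfy the descending chain condition. A standard greedy argument then extracts finitely many orbit generators for any $\Ginf$-stable closed set, in particular for $\Xinf$. Establishing this Noetherianity is the heart of the matter. I would proceed by a double induction, on the number of legs and on $k$: flattening an infinite tensor along its first leg yields a (possibly infinite) matrix whose $\GL(V)$-orbit is governed by a single rank invariant, after which the residual data is an infinite tensor with one fewer leg, to which one recurses; the permutation action $\Sinf$ symmetrises the choice of leg, and --- crucially for finiteness --- the leg-adjunction monoid collapses the infinitely many possible formats into finitely many orbit-closures. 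The real difficulty lies here: one must carry out this orbit analysis for a monoid $\Ginf$ that is at once large enough to force finiteness and concrete enough to drive the induction, inside a limit that mixes the continuous symmetry $\GL(V)$ with the combinatorial symmetries $\Sinf$ and leg-adjunction. Known equivariant-Noetherianity results --- for bounded-rank matrices, and for the infinite Grassmannian, under $\Sinf$-symmetry --- supply the base cases and a template, but the tensor setting requires genuinely new ideas to control the interaction of all these symmetries.
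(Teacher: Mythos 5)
Your reduction has the right overall shape: pass to an infinite-dimensional limit of tensor spaces, show that the limit of the border-rank-$\le k$ loci is cut out set-theoretically by finitely many orbits of equations under a large symmetry monoid, and take $d$ to be the maximal degree of the orbit generators, pulling equations back to each finite format. This is exactly the skeleton by which the paper derives Main Theorem I from its Main Theorem III. The genuine gap is in what you call the engine: you reduce everything to the assertion that the \emph{whole ambient space} $\Ainf$ is topologically $\Ginf$-Noetherian, and you do not prove it. Your sketched ``double induction on the number of legs and on $k$'' (flatten along one leg, use a single rank invariant, recurse) is not an argument --- you yourself concede that ``genuinely new ideas'' are needed at exactly this point --- and the assertion itself is strictly stronger than anything the paper establishes: nowhere is $\Ainf$ shown to be $\Ginf$-Noetherian, and no such statement is known by the methods at hand. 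So the proposal stops short of the actual mathematical content of the theorem.

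What the paper does instead, and what is missing from your plan, is the interposition of the flattening variety $\Yinf \supseteq \Xinf$, defined by the $(k+1)\times(k+1)$ minors of all flattenings. Two separate statements are proved about it: first, $\Yinf$ is cut out from $\Ainf$ by finitely many $\Ginf$-orbits of such minors (Proposition~\ref{prop:YFinite}, via a Borel fixed-point argument showing that once $p$ is large, flattening ranks are controlled by contractions to a bounded number of factors); second, $\Yinf$ is $\Ginf$-Noetherian (Theorem~\ref{thm:YEqNoether}), by induction on $k$: one covers $\Yinf$ by $\Yinf[k-1]$ together with finitely many open charts on which a fixed $k\times k$ flattening minor is nonzero, and on each chart one builds an $\Inc(\NN)$-equivariant (not $\Sinf$-equivariant!) parametrization $\Psi$ landing in a localization of a polynomial ring with only finitely many ``rows'' of variables, to which the Cohen/Hillar--Sullivant theorem (Theorem~\ref{thm:RlN}) applies. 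Then $\Xinf$, being a closed $\Ginf$-stable subset of the $\Ginf$-Noetherian space $\Yinf$, is cut out by finitely many orbits. Note also that the paper first reduces to a fixed finite $n\ge k+1$ via Lemma~\ref{lm:AllSame}; with your infinite-dimensional $V$ even the finiteness of rows needed for the Hillar--Sullivant input is lost, so the ``known equivariant-Noetherianity results'' you invoke would not apply directly. In short, the intermediate variety $\Yinf$ and the localization/$\Inc(\NN)$ trick constitute the heart of the proof, and they are precisely what your proposal leaves open.
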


Table \ref{tab:dk} shows what is known about the smallest $d(k)$
satisfying the conclusion of the theorem for $K$ of characteristic
$0$.

\begin{table}
\begin{tabular}{l|l|l|l|l|l|l|l}
$k$ & $0$ & $1$ & $2$ & $3$ & $4$ & $(3l-1)/2$ ($l \geq 3$ odd) & $k$ \\
\hline
$d(k)$ & $1$ & $2$ & $3$ & $\geq 4$ & $\geq 9$ & $\geq 3l$ & $\geq k+1$ \\
\end{tabular}\\
\ \\
\ \\
\caption{Degree bounds: $d(0)=1$ is trivial; $d(1)=2$ is classical---$2
\times 2$-subdeterminants of flattenings generate the ideal of pure
tensors; $d(2)=3$ is due to \cite{Landsberg04} and scheme-theoretically
to \cite{Raicu10}; for $k=3$ there are already quartic polynomials in
the ideal for $3 \times 3 \times 3$-tensors (while there are no $4 \times
4$-subdeterminants of flattenings) \cite[Proposition 6.1]{Landsberg04};
these are due to Strassen \cite[Section 4]{Strassen83} and together
they generate the ideal if $p=3$ \cite[Theorem 1.3]{Landsberg06} but it
is unknown whether they define border-rank $\leq 3$ tensors for larger
$p$; $d((3l-1)/2) \geq 3l$ and its special case $d(4) \geq 9$ follows
from Strassen's degree-$3l$ equation for the hypersurface of $l \times
l \times 3$-tensors of border rank at most $(3l-1)/2$ \cite[Theorem
4.6]{Strassen83} (see also \cite{Landsberg08}); and $d(k) \geq
k+1$ follows from a general fact about ideals of secant varieties
\cite[Corollary 3.2]{Landsberg04} applied to the variety of pure tensors.}
\label{tab:dk}
\end{table}

\begin{re} 
{\em A priori}, the bound $d$ in Main Theorem I depends on the infinite
field $K$, as well as on $k$.  But take $L$ to be any algebraically
closed field of the same characteristic as $K$. Then, by Hilbert's Basis
theorem, $d$ from Main Theorem I applied to $L$ has the property that for
all $p,n_1,\ldots,n_p$ the ideal of all polynomials vanishing on tensors
over $L$ of rank at most $k$ is the radical of its sub-ideal generated
by elements of degree at most $d$. This latter statement (certain ideals
given as kernels of $L$-algebra homomorphisms defined over $\ZZ$ are the
radicals of their degree-at-most-$d$ parts) can be phrased as a linear
algebra statement over the common prime field of $L$ and $K$. Hence any
$d$ that works for $L$ also works for $K$. This shows the existence of a
degree bound that depends only on $k$ and on the {\em characteristic} of
$K$. It seems likely that a uniform bound exists that does not depend on
the characteristic either, but that would require more subtle arguments.
\end{re}

An interesting consequence of Main Theorem I is the following, for
whose terminology we refer to \cite{Allman04}.

\begin{cor} \label{cor:GMM}
For every fixed $k$, there exists a uniform degree bound on equations
needed to cut out the $k$-state general Markov model on any phylogenetic
tree (in which any number of taxa and of internal vertices
of any degree are allowed).
\end{cor}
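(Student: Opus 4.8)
The plan is to derive this from Main Theorem~I together with the known reduction of general Markov tree models to \emph{star} models. Fix $k$ and a phylogenetic tree $T$ with leaf set $L$; regard the $k$-state general Markov model on $T$ as a constructible subset of $\bigotimes_{\ell\in L}K^k$ and let $V_T$ be its Zariski closure. For each internal vertex $v$ of $T$ the connected components of $T-v$ are indexed by the edges $e$ at $v$, and this partitions $L$ into blocks $S_e$; bracketing the tensor factors accordingly identifies $\bigotimes_{\ell\in L}K^k$ with $\bigotimes_{e\ni v}\bigl(\bigotimes_{\ell\in S_e}K^k\bigr)$, and I call this linear coordinate change $\mathrm{Flat}_v$. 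The first step is to invoke the structural result from the phylogenetics literature (see \cite{Allman04} and subsequent work): $V_T=\bigcap_{v}\mathrm{Flat}_v^{-1}\bigl(V_{v}\bigr)$, the intersection running over the internal vertices $v$, where $V_{v}$ is the Zariski closure of the general Markov model on the star with a $k$-state center and with $\deg(v)$ leaves whose state counts are the numbers $k^{|S_e|}$.

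The second step identifies each $V_v$ with a bounded-rank tensor variety. Writing the parametrisation of a star model with center states $[k]$ and leaf states $[N_1],\ldots,[N_m]$ in terms of a root distribution and row-stochastic transition matrices $M_i$, the resulting array is $\sum_{h=1}^{k}\pi_h\,M_1^{(h)}\otimes\cdots\otimes M_m^{(h)}$, a tensor of rank at most $k$; since the stochasticity constraints cut out a full-dimensional subset, $V_v$ is exactly the locus of $N_1\times\cdots\times N_m$-tensors of border rank at most $k$, with $N_i=k^{|S_{e_i}|}$ and $m=\deg(v)$.

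Now Main Theorem~I applies to each $V_v$ with a single bound $d=d(k)$ that is independent of $v$ and of the integers $|S_e|$ and $m$, hence independent of $T$ and of the number of taxa: each $V_v$ is cut out inside $\bigotimes_{e\ni v}\bigl(\bigotimes_{\ell\in S_e}K^k\bigr)$ by polynomials of degree at most $d$. Because $\mathrm{Flat}_v$ merely permutes coordinates, pulling these polynomials back to $\bigotimes_{\ell\in L}K^k$ preserves their degrees, and the union of the resulting finite sets over the finitely many internal vertices of $T$ is a set of polynomials of degree at most $d$ whose common zero set is $\bigcap_v\mathrm{Flat}_v^{-1}(V_v)=V_T$. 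Since $d$ depends only on $k$, this is the asserted uniform degree bound.

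The main obstacle is the first step: the identity $V_T=\bigcap_v\mathrm{Flat}_v^{-1}(V_v)$ is the nontrivial input, resting on an edge-contraction analysis of the parametrising map. If one does not wish to quote it, one proves the two inclusions separately. The inclusion ``$\subseteq$'' is immediate, since $\mathrm{Flat}_v$ sends a model point on $T$ to a model point on the star at $v$. For ``$\supseteq$'' one must reconstruct edge parameters from the flattenings; the classical route is to diagonalise the tensor along one internal edge at a time and propagate the resulting factorisation through the tree, and this reconstruction is where essentially all the effort lies. One should also check the degenerate situations---leaves or branches whose flattening has rank below $k$, or internal vertices of small degree---so that the propagation does not stall; these do not affect the degree bound $d(k)$.
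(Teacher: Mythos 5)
Your proposal is correct and follows essentially the same route as the paper: quote the Allman--Rhodes local-to-global reduction to star models at the internal vertices, identify (the closures of) those star models with varieties of tensors of border rank at most $k$, and invoke Main Theorem~I, whose bound $d(k)$ is independent of the clump sizes $k^{|S_e|}$ and of the vertex degrees, hence of the tree. The only cosmetic difference is that you absorb the $(k+1)\times(k+1)$ edge-flattening determinants into the vertex star conditions (harmless, since a border-rank-$\le k$ tensor at a vertex has all of its flattenings of rank $\le k$, and $d(k)\ge k+1$), whereas the paper keeps them as a separate set of degree-$(k+1)$ equations.
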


Indeed, in \cite{Allman04} it is proved that the $k$-state general Markov
model on any tree is cut out by certain $(k+1) \times (k+1)$-determinants
together with the equations coming from star models (and this also holds
scheme-theoretically \cite{Draisma07b}). These star models, in turn, are
varieties of border-rank-$k$ tensors. Note that set-theoretic equations
for the $4$-state model were recently found in \cite{Friedland10} and,
using numerical algebraic geometry, in \cite{Bates10}.

This paper is organised as follows. Section~\ref{sec:Tensors}
recalls a coordinate-free version of tensors, together with the
fundamental operations of contraction and flattening, which do
not increase (border) rank. That section also recalls a known
reduction to the case where all sizes $n_i$ are equal to any fixed
value $n$ greater than $k$ (Lemma~\ref{lm:AllSame}), as well as
an essentially equivalent reformulation of Main Theorem I (Main
Theorem II). In Section~\ref{sec:Infinite} we introduce a natural
projective limit $\Ainf$ of the spaces of $n \times n \times \ldots
\times n$-tensors (with the number of factors tending to infinity),
which contains as an (infinite-dimensional) subvariety the set
$\Xinf$ of {\em $\infty$-dimensional tensors of border rank at most
$k$}. This variety is contained in the {\em flattening
variety} $\Yinf$ defined by all
$(k+1)\times(k+1)$-subdeterminants of flattenings. In Section~\ref{sec:Yk}
we prove that $\Yinf$ is defined by finitely many orbits of equations
under a group $\Ginf$ of natural symmetries of $\Ainf,\Xinf,\Yinf$
(Proposition~\ref{prop:YFinite}). In Section~\ref{sec:EqNoeth} we
recall some basic properties of equivariantly Noetherian topological
spaces, and prove that $\Yinf$ is a $\Ginf$-Noetherian topological
space with the Zariski topology (Theorem~\ref{thm:YEqNoether}). This
means that any $\Ginf$-stable closed subset is defined by finitely many
$\Ginf$-orbits, and hence in particular this holds for $\Xinf$ (Main
Theorem III below). In Section~\ref{sec:Proofs} we show how this implies
Main Theorems I and II, and in Section~\ref{sec:Ideal} we speculate on
finiteness results for the entire ideal of $\Xinf$, rather than just
set-theoretic finiteness.

\section*{Acknowledgments}
We thank Sonja Petrovi\'c for pointing out that the formulation of
Corollary~\ref{cor:GMM} in an earlier version of this paper was incorrect.

\section{Tensors, flattening, and contraction} \label{sec:Tensors}

For most of the arguments in this paper---with the notable exception of
the proof of Lemma~\ref{lm:Zprime}---a coordinate-free notion of tensors
will be more convenient than the array-of-numbers interpretation from the
Introduction. Hence let $V_1,\ldots,V_p$ be finite-dimensional vector
spaces over a fixed infinite field $K$.  Setting $[p]:=\{1,\ldots,p\}$,
we write $V_I:=\bigotimes_{i \in I} V_i$ for the tensor product of the
$V_i$ with $i \in I \subseteq [p]$. The rank of a tensor $\omega$ in
$V_{[p]}$ is the minimal number of terms in any expression of $\omega$
as a sum of pure tensors $\bigotimes_{i \in [p]} v_i$ with $v_i \in V_i$.

Given a $p$-tuple of linear maps $\phi_i: V_i \to U_i$,
where $U_1,\ldots,U_p$ are also vector spaces over $K$, we write
$\phi_{[p]}:=\bigotimes_{i \in [p]}\phi_i$ for the linear map $V_{[p]} \to
U_{[p]}$ determined by $\bigotimes_{i \in [p]} v_i \mapsto \bigotimes_{i
\in [p]} \phi_i(v_i)$. Clearly $\rk \pi_{[p]} \omega \leq \rk \omega$ for
any $\omega \in V_{[p]}$, and this inequality carries over to the border
rank. In the particular case where a single $U_j$ equals $K$, $U_i$ equals
$V_i$ for all $i \neq j$, and $\phi_i=\id_{V_i}$ for all $i \neq j$, we
identify $U_{[p]}$ with $V_{[p] - \{j\}}$ in the natural way, and we
call the map $\phi_{[p]}:V_{[p]} \to V_{[p]-\{j\}}$ the {\em contraction}
along the linear function $\phi_j$. The composition of contractions
along linear functions $\phi_j$ on $V_{j}$ for all $j$ in some subset $J$
of $[p]$ is a linear map $V_{[p]} \to V_{[p]-J}$, called a contraction
along the pure $|J|$-tensor $\bigotimes_{j \in J} \phi_j$, that does not
increase (border) rank. Now we can phrase a variant of our Main Theorem I.

\begin{thm}[Main Theorem II.]
For all $k \in \NN$ there exists a $p_0$ such that for all $p \geq p_0$
and for all vector spaces $V_1,\ldots,V_p$ a tensor $\omega \in V_1
\otimes \cdots \otimes V_p$ has border rank at most $k$ if and only if
all its contractions along pure $(p-p_0)$-tensors have border rank at
most $k$.
\end{thm}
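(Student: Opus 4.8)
The plan is to derive Main Theorem II from the set-theoretic finiteness of $\Xinf$ provided by Main Theorem III, using nothing more than the monotonicity properties of border rank already recorded above. The ``only if'' direction is free: contraction along a pure tensor does not increase border rank, so if $\omega$ has border rank at most $k$ then so do all of its contractions. For the converse I would argue as follows.

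First, by Lemma~\ref{lm:AllSame} it suffices to treat the case in which every $V_i$ equals a fixed space $K^n$ with $n>k$, since contractions along pure tensors and border rank are compatible with the passage to this case; and we are free to enlarge $p_0$ at any point. Realise $\Ainf=\varprojlim_q(K^n)^{\otimes q}$ with connecting maps the contractions along the coordinate function $e_1^*$ in the last tensor factor, and write $\pi_q\colon\Ainf\to(K^n)^{\otimes q}$ for the projection; each $\pi_q$ is a composite of contractions, hence a morphism of varieties sending rank-$\le k$ tensors to rank-$\le k$ tensors and therefore border-rank-$\le k$ tensors to border-rank-$\le k$ tensors. For a finite tensor $\tau\in(K^n)^{\otimes q}$ put $\widehat\tau:=\tau\otimes e_1\otimes e_1\otimes\cdots\in\Ainf$; then $\pi_q(\widehat\tau)=\tau$ because $e_1^*(e_1)=1$. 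The observation, used repeatedly, is that $\tau$ has border rank at most $k$ if and only if $\widehat\tau\in\Xinf$. For one direction, each $\pi_{q'}(\widehat\tau)$ is either a contraction of $\tau$ (when $q'<q$) or $\tau$ tensored with the pure tensor $e_1^{\otimes(q'-q)}$ (when $q'\ge q$), and both operations preserve border rank $\le k$; hence $\widehat\tau$ lies in the border-rank variety at every level, i.e.\ in $\Xinf$. Conversely, $\tau=\pi_q(\widehat\tau)$ inherits border rank $\le k$ from $\widehat\tau$ because $\pi_q$ does not increase border rank.

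Now apply Main Theorem III: $\Xinf$ is a $\Ginf$-stable closed subset of the $\Ginf$-Noetherian space $\Yinf$, so there are finitely many polynomials $g_1,\dots,g_m$ on $\Ainf$ — to which I also adjoin the finitely many orbit representatives of flattening minors that cut out $\Yinf$ — whose $\Ginf$-orbits cut out $\Xinf$ set-theoretically. Since $K[\Ainf]=\varinjlim_q K[(K^n)^{\otimes q}]$, all $g_i$ factor through $\pi_{p_0}$ for one common $p_0=p_0(k)$. Let $p\ge p_0$ and let $\omega\in(K^n)^{\otimes p}$ be such that every contraction of $\omega$ along a pure $(p-p_0)$-tensor has border rank $\le k$; by the observation it suffices to show $\widehat\omega\in\Xinf$, i.e.\ that $g_i(h\widehat\omega)=0$ for all $i$ and all $h\in\Ginf$. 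The crux is the identity $\pi_{p_0}(h\widehat\omega)=B\bigl(\omega'\otimes e_1^{\otimes s}\bigr)$, valid for every $h\in\Ginf$, where $B$ is an invertible linear endomorphism of $(K^n)^{\otimes p_0}$ assembled from the factor-permutations and factor-linear-maps in $h$ that survive into the first $p_0$ positions, and $\omega'$ is a contraction of $\omega$ along a pure tensor of order $p-r\ge p-p_0$ (with $r=p_0-s$ the number of $\omega$-factors landing in the first $p_0$ positions). Writing such a contraction as the composite of a contraction of $\omega$ along a pure $(p-p_0)$-tensor with a further contraction along a pure $s$-tensor, the hypothesis shows $\omega'$ has border rank $\le k$; tensoring with $e_1^{\otimes s}$ and applying the invertible $B$ preserves this, so $\tau:=\pi_{p_0}(h\widehat\omega)$ has border rank $\le k$. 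By the observation at level $p_0$, $\widehat\tau\in\Xinf$, whence $g_i(h\widehat\omega)=g_i(\pi_{p_0}(h\widehat\omega))=g_i(\tau)=g_i(\widehat\tau)=0$. Therefore $\widehat\omega\in\Xinf$, and the observation gives that $\omega=\pi_p(\widehat\omega)$ has border rank at most $k$.

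The main obstacle is the crux identity: one has to unwind the precise action of $\Ginf$ on $\Ainf$ — how its factor-permutations and factor-linear-substitutions interact with the connecting contractions along $e_1^*$ and with the tail $e_1\otimes e_1\otimes\cdots$ of $\widehat\omega$ — and verify that collapsing the factors beyond position $p_0$ always yields an invertibly-twisted, $e_1$-padded contraction of $\omega$ along a pure tensor of order at least $p-p_0$. Everything else is a formal assembly of the monotonicity of border rank under contraction, its invariance under tensoring with pure tensors and under the $\Ginf$-action, and the closedness of $\Xinf$ in $\Ainf$.
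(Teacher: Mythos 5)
Your proposal is correct and follows essentially the same route as the paper's own proof: reduce via Lemma~\ref{lm:AllSame}, pad $\omega$ with $e_0$-factors to get an element of $\Ainf$, choose $p_0$ from Main Theorem III so that the defining equations live at level $p_0$, and check their $\Ginf$-translates vanish by unwinding each translate into a contraction of $\omega$ along a pure tensor of order at least $p-p_0$, padded and twisted by an element of $G_{p_0}$. The ``crux identity'' you defer is precisely the computation the paper carries out (it holds as you state it, up to the harmless degenerate case where a contracted padding vector produces the scalar $0$, in which case the projection is $0$ and the equations vanish trivially).
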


\begin{re}
If $p_0$ is known explicitly, this theorem gives rise to the following
theoretical randomised algorithm for testing whether a given $p$-tensor
has border rank at most $k$: for each $(p-p_0)$-subset $J$ of $[p]$
contract along a random pure tensor in the dual of $V_J$, and on the
resulting $p_0$-tensor evaluate the polynomials cutting out tensors of
border rank at most $k$. If each of these tests is positive, then one
can be fairly certain that the given tensor is, indeed, of border rank
at most $k$. While this might not be a practical algorithm, its number
of arithmetic operations is polynomial in the number of entries in the
tensor, as long as we take $k$ constant.
\end{re}

In our proofs of Main Theorems I and II we will use a second operation
on tensors, namely, {\em flattening}. Suppose that $I,J$ form a
partition of $[p]$ into two parts. Then there is a natural isomorphism
$\flat=\flat_{I,J}: V_{[p]} \to V_I \otimes V_J$. The image $\flat
\omega$ is a $2$-tensor called a flattening of $\omega$. Its rank (as
a $2$-tensor) is a lower bound on the border rank of $\omega$.
The first step in our proof below is a reduction to the case where all
$V_i$ have the same dimension.

\begin{lm} \label{lm:AllSame}
Let $p,k,n$ be natural numbers with $n \geq k+1$, and let $V_1,\ldots,V_p$
be vector spaces over $K$. Then a tensor $\omega \in \bigotimes_{i \in
[p]} V_i$ has rank (respectively, border rank) at most $k$ if and only
if for all $p$-tuples of linear maps $\phi_i:V_i \to K^n$ the tensor
$(\bigotimes_{i \in [p]}\phi_i)\omega$ has rank (respectively, border
rank) at most $k$.
\end{lm}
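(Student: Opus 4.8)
The plan is to prove both directions of the equivalence, the forward direction being trivial and the reverse being the substantive one.

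For the "only if" direction: if $\omega$ has rank (respectively border rank) at most $k$, then so does every tensor obtained by applying a tuple of linear maps $\bigotimes_i \phi_i$, since such maps do not increase rank or border rank — this was recorded in the discussion preceding the lemma (the inequality $\rk \phi_{[p]}\omega \le \rk\omega$, carrying over to border rank). So nothing needs to be done here.

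For the "if" direction, suppose that $(\bigotimes_i \phi_i)\omega$ has rank (resp. border rank) at most $k$ for every tuple $\phi_i : V_i \to K^n$. First I would reduce to the rank case and handle border rank afterwards. For the rank statement: the key point is that because $n \ge k+1$, a tensor of rank $\le k$ in $\bigotimes_i V_i$ is "supported" on subspaces of dimension $\le k \le n-1 < n$ in each factor. Concretely, if $\omega$ had rank $> k$, I want to produce a single tuple $(\phi_i)$ for which $(\bigotimes_i\phi_i)\omega$ still has rank $> k$. The natural approach: choose for each $i$ a subspace $W_i \subseteq V_i$ with $\dim W_i \le n$ such that $\omega$ lies in $\bigotimes_i W_i$ and $\omega$ already has rank $> k$ "inside" $\bigotimes_i W_i$ — but rank does not depend on the ambient space, so it suffices to find $W_i$ of dimension at most $n$ with $\omega \in \bigotimes_i W_i$; then any tuple $\phi_i$ restricting to an isomorphism $W_i \to \phi_i(W_i) \subseteq K^n$ (possible since $\dim W_i \le n$) sends $\omega$ to a tensor of the same rank. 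The existence of such $W_i$ is standard: take $W_i$ to be the image of the $i$-th flattening map $V_i^* \to \bigotimes_{j \ne i} V_j$ applied dually — i.e. the smallest subspace of $V_i$ such that $\omega \in W_i \otimes \bigotimes_{j\ne i} V_j$; one shows $\dim W_i \le \rk \omega$ when $\omega$ has finite rank, but in general we only know $\dim W_i$ could be large. This is where care is needed.

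So the \emph{main obstacle} is the following asymmetry: if $\omega$ genuinely has large rank, the "essential" subspaces $W_i$ may have dimension larger than $n$, and then no single tuple into $K^n$ captures all of $\omega$. The resolution is to argue by contradiction using the hypothesis more cleverly: assume $\omega$ has border rank (resp. rank) $> k$; then, since the set of tensors of border rank $\le k$ is Zariski-closed, there is a polynomial $f$ vanishing on all border-rank-$\le k$ tensors in $\bigotimes_i V_i$ with $f(\omega) \ne 0$; one then wants to "push $f$ down" to a finite stage — but this is essentially circular with the main theorem. The cleaner route, which I expect the authors take, is purely linear-algebraic: pick finite-dimensional subspaces $W_i \subseteq V_i$ with $\omega \in \bigotimes W_i$ (any will do since $\omega$ involves only finitely many basis vectors), then observe that it suffices to treat the case $V_i = W_i$ finite-dimensional, and then \emph{for each factor separately} use that a generic linear projection $V_i \to K^n$ composed with the others preserves the property "rank $> k$" provided $n \ge k+1$ — because the locus of tensors of (border) rank $\le k$ in $\bigotimes V_i$ is cut out set-theoretically, after restricting to a generic $\le n$-dimensional coordinate subspace in each factor, by the same equations, and a tensor of rank $>k$ has a nonzero $(k+1)\times(k+1)$ minor in \emph{some} flattening or a nonvanishing higher equation involving at most $k+1$ coordinates per factor. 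Making the bookkeeping "at most $k+1$ coordinates per factor, and $k+1 \le n$" precise is the technical heart; once one knows every obstruction to (border) rank $\le k$ lives on a $\le n$-dimensional piece of each $V_i$, a generic tuple $(\phi_i)$ into $K^n$ is injective on that piece (here one uses that $K$ is infinite), and transports the obstruction faithfully, giving the required contradiction.

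Finally, to pass from rank to border rank: both the set of rank-$\le k$ tensors and its closure are preserved under the reductions above, and the hypothesis "$(\bigotimes_i\phi_i)\omega$ has border rank $\le k$ for all $\phi_i$" together with the Zariski-closedness of the border-rank locus lets one run the same contradiction argument with "rank" replaced by "border rank" throughout; the only extra ingredient is that border rank, like rank, does not increase under $\bigotimes_i \phi_i$ and is insensitive to the ambient dimension, so restricting to finite-dimensional $W_i$ and then to generic $n$-dimensional quotients is harmless. I would present the rank case in full and then remark that the border-rank case is identical, replacing the variety of rank-$\le k$ tensors by its Zariski closure.
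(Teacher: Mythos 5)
Your ``only if'' direction and the initial reduction to finite-dimensional factors match the paper, and you correctly isolate the real difficulty: the essential subspaces $W_i$ supporting $\omega$ may have dimension larger than $n$. But your resolution of that difficulty does not work. It rests on the claim that every obstruction to (border) rank $\leq k$ is witnessed either by a nonzero $(k+1)\times(k+1)$ flattening minor or by an equation involving at most $k+1\leq n$ coordinates per factor. For rank this is not even meaningful: the rank-$\leq k$ locus is not Zariski closed, so there is no set of defining equations, and a tensor of rank $>k$ but border rank $\leq k$ defeats the flattening-minor version. For border rank, the statement that defining equations can be taken to involve at most $n$ coordinates per factor is essentially a reformulation of the lemma itself (it says that projections to $(K^n)^{\otimes p}$ detect border rank $>k$), so the argument is circular --- you note this about the first route you consider, but the ``cleaner route'' has the same problem, and the ``bookkeeping'' you defer is the entire content of the statement. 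Genericity of the $\phi_i$ does not rescue this, because you have no independent control on what a generic projection does to rank when the essential dimensions exceed $n$.

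The paper's fix is a simple dichotomy, carried out one factor at a time, which is the idea you are missing. Let $U_1\subseteq V_1$ be the image of $\omega$ viewed as a linear map $V_{[p]-\{1\}}^*\to V_1$. If $\dim U_1\leq n$, choose $\phi_1:V_1\to K^n$ and $\psi_1:K^n\to V_1$ with $\psi_1\circ\phi_1=\id$ on $U_1$; then $\omega':=(\phi_1\otimes\bigotimes_{i>1}\id_{V_i})\omega$ satisfies $\omega=(\psi_1\otimes\bigotimes_{i>1}\id_{V_i})\omega'$, so $\rk\omega'=\rk\omega$ (and likewise for border rank, since linear maps do not increase it); the compression is lossless and one moves on to the next factor. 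If $\dim U_1>n$, take $\phi_1$ mapping $U_1$ surjectively onto $K^n$; then the flattening $\flat_{\{1\},[p]-\{1\}}\omega'$ has rank $n\geq k+1$, and since flattening rank is a lower bound for border rank (hence for rank), $\omega'$ already has (border) rank $>k$ and one is done immediately. This second case is exactly where your sketch stalls: when the essential dimension exceeds $n$, it is the flattening-rank lower bound --- not equations with bounded support per factor --- that certifies the projected tensor still has (border) rank $>k$.
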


This lemma is well known, and even holds at the scheme-theoretic level
\cite[Theorem 11]{Allman04}. We include a proof for completeness.

\begin{proof}
The ``only if'' part follows from the fact that $\phi_{[p]}$ does
not increase rank or border rank. For the ``if'' part assume that
$\omega$ has rank strictly larger than $k$, and we argue that there
exist $\phi_1,\ldots,\phi_p$ such that $\phi_{[p]} \omega$ still has
rank larger than $k$. It suffices to show how to find $\phi_1$; the
remaining $\phi_i$ are found in the same manner. Let $U_1$ be the image
of $\omega$ regarded as a linear map from the dual space $V_{[p]-\{1\}}^*$
to $V_1$. If $U_1$ has dimension at most $n$, then by elementary linear
algebra there exist linear maps $\phi_1:V_1 \to K^n$ and $\psi_1:K^n \to
V_1$ such that $\psi_1 \circ \phi_1$ is the identity map on $U_1$. Set
$\omega':=\phi_1 \otimes (\bigotimes_{i>1} \id_{V_i})\omega$, so that by
construction $\omega$ itself equals $\psi_1 \otimes (\bigotimes_{i>1}
\id_{V_i}) \omega'$. By the discussion above, we have the inequalities
$\rk \omega \geq \rk \omega' \geq \rk \omega$, so that both ranks are
equal and larger than $k$, and we are done. If, on the other hand, $U_1$
has dimension larger than $n$, then let $\phi_1:V_1 \to K^n$ be any
linear map that maps $U_1$ surjectively onto $K^n$. Defining $\omega'$
as before, we find that the image of $\omega'$ regarded as a linear
map $V_{[p]-\{1\}}^* \to K^n$ is all of $K^n$, so that the flattening
$\flat_{1,[p]-\{1\}} \omega'$ already has rank $n>k$. This implies that
$\omega'$ itself has rank larger than $k$.
\end{proof}

\section{Infinite-dimensional tensors and their symmetries}
\label{sec:Infinite}

The space of $\NN \times \NN$-matrices can be realised as the projective
limit of the spaces of $p \times p$-matrices for $p \to \infty$, where
the projection from $(p+1) \times (p+1)$-matrices to $p \times p$-matrices
sends a matrix to the $p \times p$-matrix in its upper left corner. In
much the same way, we will construct a space of {\em infinite-dimensional
tensors} as a projective limit of all tensor powers $V^{\otimes p},\
p \in \NN$ of the vector space $V=K^n$ for some fixed natural number
$n$. To this end, fix a linear function $x_0 \in V^*$, and denote by $\pi_p$
the contraction $V^{\otimes p} \to V^{\otimes p}$ determined by
\[ \pi_p: v_1 \otimes \cdots \otimes v_p \otimes v_{p+1}
\mapsto
x_0(v_{p+1}) \cdot v_1 \otimes \cdots \otimes v_p. \]
Dually, this surjective map gives rise to the injective linear map
\[ (V^*)^{\otimes p} \to (V^*)^{\otimes p+1},\
\xi \mapsto \xi \otimes x_0. \]
Let $T_p$ be the coordinate ring of $V^{\otimes p}$. We identify $T_p$
with the symmetric algebra $S((V^*)^{\otimes p})$ generated by the
space $(V^*)^{\otimes p}$, and embed $T_p$ into $T_{p+1}$ by means of
the linear inclusion $(V^*)^{\otimes p} \to (V^*)^{\otimes p+1}$ above.
Now the projective limit $\Ainf:=\lim_{\ot p} V^{\otimes p}$ along the
projections $\pi_p$ is a vector space whose coordinate ring $\Tinf$
is the union $\bigcup_{p \in \NN} T_p$. Here by coordinate ring we mean that $\Ainf$ corresponds naturally (via evaluation) to the set of $K$-valued points of $\Tinf$ (ie.\ $K$-algebra homomorphisms $T \to K$). Not every vector space has a coordinate ring in that sense, as it is equivalent to being the dual space of a vector space: $\Ainf$ is naturally the dual space of the space ${\Tinf}^1$ of homogeneous polynomials of degree $1$ in $\Tinf$. For example, as ${\Tinf}^1$ has a countably infinite basis, it is not a dual space and has itself no coordinate ring in that sense.

At a crucial step in our arguments we will use the following more concrete
description of $\Tinf$. Extend $x_0$ to a basis $x_0,x_1,\ldots,x_{n-1}$
of $V^*$, so that $(V^*)^{\otimes p}$ has a basis in bijection with
words $w=(i_1,\ldots,i_p)$ over the alphabet $\{0,\ldots,n-1\}$, namely,
$x_w:= x_{i_1} \otimes \cdots \otimes x_{i_p}$.  The algebra $T_p$ is
the polynomial algebra in the variables $x_w$ with $w$ running over all
words of length $p$. In $\Tinf$, the coordinate $x_w$ is identified with
the variable $x_{w'}$ where $w'$ is obtained from $w$ by appending a $0$
at the end. We conclude that $\Tinf$ is a polynomial ring in countably
many variables that are in bijective correspondence with infinite words
$(i_1,i_2,\ldots)$ in which all but finitely many characters are $0$. The
finite set of positions where such a word is non-zero is called the {\em
support} of the word.

Now for symmetry: the symmetric group $S_p$ acts on $V^{\otimes p}$
by permuting the tensor factors:
\[ \pi (v_1 \otimes \cdots \otimes v_p) =
v_{\pi^{-1}(1)} \otimes \cdots \otimes v_{\pi^{-1}(p)}. \]
This leads to the contragredient action of $S_p$ on the dual space
$(V^*)^{\otimes p}$ by
\[ \pi(x_1 \otimes \cdots \otimes x_p) =
x_{\pi^{-1}(1)} \otimes \cdots \otimes x_{\pi^{-1}(p)}, \]
which extends to an action of $S_p$ on all of $T_p$ by means of algebra
automorphisms.  In terms of words, $\pi x_w$ equals the coordinate
$x_{w'}$ where the $q$-th character of $w'$ is the $\pi^{-1}(q)$-th
character of $w$. Let $\Sinf$ denote the union $\bigcup_{p \in \NN}
S_p$, where $S_p$ is embedded in $S_{p+1}$ as the subgroup fixing $p+1$.
Hence $\Sinf$ is the group of all bijections $\pi: \NN \to \NN$ whose
set of fixed points has a finite complement. This group acts on $\Ainf$
and on $\Tinf$ by passing to the limit.

The action of $\Sinf$ on $\Tinf$ has the following fundamental property:
for each $f \in \Tinf$ there exists a $p \in \NN$ such that whenever
$\pi,\sigma \in \Sinf$ agree on the initial segment $[p]$ we have
$\pi f=\sigma f$. Indeed, we may take $p$ equal to the maximum of the
union of the supports of words $w$ for which $x_w$ appears in $f$. In
this situation, there is a natural left action of the {\em increasing
monoid} $\Inc(\NN)=\{\pi: \NN \to \NN \mid \pi(1)<\pi(2)<\ldots\}$ by
means of injective algebra endomorphisms on $\Tinf$; see \cite[Section
5]{Hillar09}. The action is defined as follows: for $f \in \Tinf$,
let $p$ be as above. Then to define $\pi f$ for $\pi \in \Inc(\NN)$
take any $\sigma \in \Sinf$ that agrees with $\pi$ on the interval $[p]$
(such a $\sigma$ exists) and set $\pi f:=\sigma f$. In terms of words:
$\pi x_w$ equals the coordinate $x_{w'}$, where the $q$-th character of
$w'$ is the $\pi^{-1}(q)$-th character of $w$ if $q \in \im \pi$, and $0$
otherwise. By construction, the $\Inc(\NN)$-orbit of any $f \in \Tinf$
is contained in the $\Sinf$-orbit of $f$. Note that the left action of
$\Inc(\NN)$ on $\Tinf$ gives rise to a {\em right} action of $\Inc(\NN)$
by means of surjective linear maps $\Ainf \to \Ainf$. Since we cannot
take inverses, there is no corresponding left action on $\Ainf$, while
of course the left action of $\Sinf$ can be made into a right action by
taking inverses.  A crucial argument in Section~\ref{sec:EqNoeth} uses
a map that is not equivariant with respect to $\Sinf$ but is equivariant
relative to $\Inc(\NN)$.

Apart from $S_p$, the group $\lieg{GL}(V)^p$ acts linearly on $V^{\otimes p}$ by
\[ (g_1,\ldots,g_p)(v_1 \otimes \cdots v_p)=
(g_1 v_1 \otimes \cdots \otimes g_p v_p), \]
and this action gives a right action on $(V^*)^{\otimes p}$ by
\[ (z_1 \otimes \cdots \otimes z_p)(g_1,\ldots,g_p)
((z_1 \circ g_1) \otimes \cdots \otimes (z_p \circ g_p)). \]
Regarding $\lieg{GL}(V)^p$ as the subgroup of $\lieg{GL}(V)^{p+1}$
consisting of all tuples $(g_1,\ldots,g_p,1)$, we obtain a (left) action
of the union of all $\lieg{GL}(V)^p$ on $\Ainf$, as well as an action
on $\Tinf$ by means of algebra automorphisms.

The group generated by $S_p$ and $\lieg{GL}(V)^p$ in their representations
on $V^{\otimes p}$ is the semidirect (wreath) product $S_p \ltimes
\lieg{GL}(V)^p$, which we denote by $G_p$. The embeddings $S_p \to
S_{p+1}$ and $\lieg{GL}(V)^p \to \lieg{GL}(V)^{p+1}$ together determine
an embedding $G_p \to G_{p+1}$, and the union $\Ginf$ of all $G_p,\ p \in
\NN$ has an action on $\Ainf$ and on $\Tinf$ by means of automorphisms.

Now we come to tensors of bounded border rank. Write $\Xp{p}$ for the
subvariety of $V^{\otimes p}$ consisting of tensors of border rank at
most $k$. Since $\pi_p$ is a contraction along a pure tensor, it maps
$\Xp{p+1}$ into $\Xp{p}$; let $\Xinf \subseteq \Ainf$ be the projective
limit of the $\Xp{p}$. The elements of $G_p$ stabilise $\Xp{p}$, and
hence the elements of $\Ginf$ stabilise $\Xinf$.  We can now state our
third Main Theorem, which will imply Main Theorems I and II.

\begin{thm}[Main Theorem III]
For any fixed natural number $k$, the set $\Xinf$ is the common zero
set of finitely many $\Ginf$-orbits of polynomials in $\Tinf$.
\end{thm}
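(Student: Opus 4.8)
The plan is to deduce Main Theorem~III from the single nontrivial fact that the flattening variety $\Yinf$, with its Zariski topology, is a $\Ginf$-Noetherian topological space --- equivalently, that every $\Ginf$-stable closed subset of $\Yinf$ is the common zero set of finitely many $\Ginf$-orbits of polynomials in $\Tinf$ (this is Theorem~\ref{thm:YEqNoether}). Granting this, it remains only to check that $\Xinf$ is such a subset. Each $\Xp{p}\subseteq V^{\otimes p}$ is Zariski-closed by the very definition of border rank, and the projections $\pi_p$ carry $\Xp{p+1}$ into $\Xp{p}$; hence $\Xinf=\lim_{\ot p}\Xp{p}$ is closed in $\Ainf$, being cut out by the ideal $\bigcup_p I(\Xp{p})\subseteq\Tinf$. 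For any partition $I\sqcup J=[p]$ the rank of a flattening $\flat_{I,J}\omega$ is a lower bound for the border rank of $\omega$ (Section~\ref{sec:Tensors}), so border rank at most $k$ forces all $(k+1)\times(k+1)$-minors of every flattening to vanish; thus $\Xp{p}\subseteq\Yp{p}$ for all $p$, and in the limit $\Xinf\subseteq\Yinf$. Finally, $\Ginf$ stabilises $\Xinf$ since each $G_p=S_p\ltimes\GL(V)^p$ stabilises $\Xp{p}$.

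All the work thus goes into proving that $\Yinf$ is $\Ginf$-Noetherian. A preliminary step is that $\Yinf$ is itself defined by finitely many $\Ginf$-orbits of equations (Proposition~\ref{prop:YFinite}): an arbitrary $(k+1)\times(k+1)$-minor of an arbitrary flattening of a truncation $\omega_p$ can be transported, using an element of $S_p$ to normalise the two-block partition and elements of $\GL(V)^p$ to normalise the chosen rows and columns, onto one of finitely many model minors. For the Noetherianity itself the strategy is to compare $\Yinf$ with a model space whose coordinate ring is a polynomial ring in countably many variables carrying an action of the increasing monoid $\Inc(\NN)$ of the kind that Hillar and Sullivant proved to be Noetherian up to symmetry \cite[Section~5]{Hillar09}. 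Concretely, using Lemma~\ref{lm:AllSame} to fix the common dimension $n$ and using the factorisation of a rank-$\le k$ matrix through $K^k$, one builds a surjective morphism onto $\Yinf$ from such a model that is equivariant for the relevant symmetry; a descending chain of $\Ginf$-stable closed subsets of $\Yinf$ then pulls back to a descending chain of $\Inc(\NN)$-stable closed subsets of the model, which stabilises by the cited result, so the original chain stabilises too. Since a closed subspace of an equivariantly Noetherian space is again equivariantly Noetherian, this gives exactly what is needed.

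The main obstacle is the construction of this equivariant parametrisation, together with the subtlety --- flagged in Section~\ref{sec:Infinite} --- that $\Sinf$ acts on $\Ainf$ but the natural parametrising and comparison maps are only one-sided, being equivariant for the monoid $\Inc(\NN)$ rather than for the group $\Sinf$. Consequently the Noetherianity has to be established first at the level of the $\Inc(\NN)$-action and only then promoted to the statement about $\Ginf$-stable subsets; making that transfer precise, and pinning down a model to which the Hillar--Sullivant results genuinely apply, is where the real effort lies. Everything downstream is formal: Main Theorem~III as stated, and then the deductions of Main Theorems~I and~II, which amount to spreading the finitely many orbits of equations back out to finite $p$.
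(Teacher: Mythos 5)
Your reduction of Main Theorem III is exactly the paper's own proof: $\Xinf$ is a closed $\Ginf$-stable subset of $\Yinf$, Theorem~\ref{thm:YEqNoether} then cuts $\Xinf$ out of $\Yinf$ by finitely many $\Ginf$-orbits of equations, and Proposition~\ref{prop:YFinite} cuts $\Yinf$ itself out of $\Ainf$ by finitely many orbits, so the same holds for $\Xinf$. Granting those two cited results, nothing more is needed, and your verification that $\Xinf$ is closed, contained in $\Yinf$, and $\Ginf$-stable is correct.

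However, the bulk of your proposal concerns how Theorem~\ref{thm:YEqNoether} itself would be proved, and there your sketch has a genuine gap. You propose a single surjective $\Inc(\NN)$-equivariant morphism onto $\Yinf$ from a Hillar--Sullivant-type model, built ``using the factorisation of a rank-$\leq k$ matrix through $K^k$''. No such global parametrisation is available: factoring one chosen flattening through $K^k$ parametrises (the closure of) the rank-$\leq k$ locus of that single flattening---essentially the secant-type set $\Xinf$---not the flattening variety $\Yinf$, which is in general strictly larger and is not the image of any evident equivariant map from a polynomial-ring model; moreover, privileging one flattening destroys the symmetry to the point that even $\Inc(\NN)$-equivariance of such a map is unclear. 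The paper's argument has a different structure: it inducts on $k$, uses Proposition~\ref{prop:YFinite} to write $\Yinf=\Yinf[k-1]\cup Z_1\cup\cdots\cup Z_N$, where $Z_a$ is the open locus where some $\Ginf$-orbit of a fixed $k\times k$ minor does not vanish identically, and parametrises only the basic open piece $Z'$ on which that one minor $\det x[\bw;\bw']$ is nonzero: expanding a bordered $(k+1)\times(k+1)$ minor lets one solve recursively for all remaining coordinates, yielding an $\Inc(\NN)'$-equivariant homomorphism $\Psi$ from $\Tinf$ into a localisation $R[\det(x[\bw;\bw'])^{-1}]$ of a polynomial ring in the variables $x_w$ whose supports meet $\NN\setminus[p]$ in at most one point (Lemma~\ref{lm:Zprime}); then $Z'$ is $\Sinf'$-Noetherian by Lemmas~\ref{lm:Image} and~\ref{lm:Closed}, and this is promoted to $\Ginf$-Noetherianity of $Z_a$ via Lemma~\ref{lm:Quot}, with Lemma~\ref{lm:Union} and the induction hypothesis completing the proof. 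You correctly flag the $\Inc(\NN)$-versus-$\Sinf$ subtlety, but the induction on $k$, the stratification of $\Yinf$ into $\Yinf[k-1]$ and the open loci, and the local (rather than global) parametrisation are precisely the ideas your ``where the real effort lies'' remark defers; as it stands, the Noetherianity input to your argument is asserted rather than proved.
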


We emphasise here that Main Theorem III only concerns the variety $\Xinf$
as a set of $K$-valued points. We do not claim that the {\em ideal}
of $\Xinf$ is generated by finitely many $\Ginf$-orbits of polynomials;
see Section~\ref{sec:Ideal}. 

\section{The flattening variety} \label{sec:Yk}
%NOTE: Subspace variety corresponds to partitioning in singleton +
%rest.

In this section we discuss the {\em flattening variety} $\Yinf \subseteq \Ainf$,
which contains $\Xinf$ and is defined by explicit equations. These
equations are determinants obtained as follows. Given any partition of
$[p]$ into $I,J$ we have the flattening $V^{\otimes p} \to V^{\otimes I}
\otimes V^{\otimes J}$. Composing this flattening with a $(k+1) \times
(k+1)$-subdeterminant of the resulting two-tensor gives a degree-$(k+1)$
polynomial in $T_p$. The linear span of all these equations for all
possible partitions $I,J$ is a $G_p$-submodule $U_p$ of $T_p$, which by
$\pi_p^*$ is embedded into $U_{p+1}$. Let $\Yp{p}$
 denote the subvariety of $V^{\otimes p}$ defined by
the $U_p$. This is a $G_p$-stable variety, and $\pi_p$ maps $\Yp{p+1}$
into $\Yp{p}$. The projective limit $\Yinf \subseteq \Ainf$ is the zero
set of the space $U:=\bigcup_p U_p$. The variety $\Yinf$ consists of all
infinite-dimensional tensors all of whose flattenings to $2$-tensors have
rank at most $k$. In particular, since flattening does not increase rank,
$\Yinf$ contains $\Xinf$.

For later use we describe these determinants in more concrete terms in
the coordinates $x_w$, where $w$ runs through the infinite words over
$\{0,1,\ldots,n-1\}$ of finite support. Let $\bw=(w_1,\ldots,w_l)$ be an
$l$-tuple of pairwise distinct infinite words with finite support. Let
$\bw':=(w'_1,\ldots,w'_m)$ be another such tuple, and assume that the
support of each $w_i$ is disjoint from that of each $w'_j$. Then we
let $x[\bw;\bw']$ be the $l \times m$ matrix with $(i,j)$-entry equal
to $x_{w_i+w'_j}$. The space $U$ is spanned by the determinants of all
matrices $x[\bw;\bw']$ with $l=m=k+1$.

\begin{ex} \label{ex:Det}
Take $w_1=1,w_2=2,w'_1=01,w'_2=02$, where we follow the convention
that the (infinitely many) trailing zeroes of words may be left out,
and where we have left out brackets and separating commas for brevity.
Then
\[ x[\bw;\bw']=
\begin{bmatrix}
x_{11} & x_{12}\\
x_{21} & x_{22}\\
\end{bmatrix}. \]
\end{ex}

Apart from the inclusion $\Xinf \subseteq \Yinf$, we will need the
following crucial property of $\Yinf$, whose proof will take up the
remainder of this section.

\begin{prop} \label{prop:YFinite}
For every fixed natural number $k$ the flattening variety $\Yinf$ is
the common zero set of finitely many $\Ginf$-orbits of $(k+1) \times
(k+1)$-determinants $\det x[\bw;\bw']$ with $\bw,\bw'$ as above.
\end{prop}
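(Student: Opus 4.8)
The plan is to show that the infinitely many determinants $\det x[\bw;\bw']$ (with $\bw,\bw'$ disjoint-support $(k+1)$-tuples of words) fall into finitely many $\Ginf$-orbits. Since $\Ginf = \bigcup_p G_p$ and $G_p = S_p \ltimes \GL(V)^p$, I would exploit the two pieces of the symmetry separately: the symmetric part $\Sinf$ lets us move the supports of the words around freely, and the $\GL(V)^p$ part lets us change the ``letters'' of the words at any given position. The first reduction is to use $\Sinf$ to standardise the combined support. Any determinant $\det x[\bw;\bw']$ involves only finitely many positions — namely the union $S$ of the supports of all $w_i$ and all $w'_j$ — and $|S|$ is bounded: each word has finite support, but more to the point, for the determinant to be a genuine $(k+1)\times(k+1)$ minor we need the $w_i$ pairwise distinct and the $w'_j$ pairwise distinct, which forces $|S|$ to be at most some explicit function of $k$ and $n$ (roughly, enough positions to distinguish $k+1$ words over an $n$-letter alphabet, so $|S| \le N(k,n)$ for some bound $N$). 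Applying a suitable $\pi \in \Sinf$, I may assume $S \subseteq [N]$.

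Having confined everything to the first $N$ coordinates, the data of a determinant $\det x[\bw;\bw']$ is: a choice of two tuples of words in $\{0,\ldots,n-1\}^N$ (only finitely many such tuples exist), subject to the disjointness and distinctness constraints. That is already a finite list, so one gets finiteness trivially once $S$ is standardised — but one has to be slightly careful: the bound $N$ must genuinely be uniform in $p$, which it is, because the combinatorial obstruction (distinguishing $k+1$ words) does not see $p$ at all. So the core of the argument is really just: (i) bound $|S|$ uniformly; (ii) use $\Sinf \subseteq \Ginf$ to push $S$ into $[N]$; (iii) observe that only finitely many determinants are supported on $[N]$. One could even tighten step (iii) using the $\GL(V)^N$-action to further reduce the number of orbit representatives, but that refinement is not needed for the statement as phrased.

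The main obstacle I anticipate is step (i): proving a uniform bound on the size of the combined support $S$. Naively a word $w_i$ could have support of any finite size, so individual words are not bounded; what saves us is that, up to the $\GL(V)^p$-action at each position, the only thing that matters about the position-$q$ letters of $w_1,\dots,w_{k+1},w'_1,\dots,w'_{k+1}$ is the pattern of coincidences among them, and a position where all $2(k+1)$ letters agree contributes a common factor and can be absorbed — or, more carefully, such positions can be collapsed because the corresponding variable $x_{w_i + w'_j}$ only depends on the ``effective'' part of the words. So the real claim is that a position $q$ is ``relevant'' to the determinant only if the restriction of $(w_1,\dots,w_{k+1},w'_1,\dots,w'_{k+1})$ to $q$ is non-constant in a suitable sense, and the number of positions where a tuple of $2(k+1)$ words over $n$ letters can be ``non-redundant'' while keeping the $w_i$ distinct and the $w'_j$ distinct is bounded by a function of $k$ and $n$ only. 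Making ``relevant/redundant'' precise and checking that collapsing redundant positions maps one determinant to another determinant in the same $\Ginf$-orbit (this is where $\GL(V)$ enters, to rescale the collapsed coordinate, and where $\Inc(\NN)$-style bookkeeping of positions is convenient) is the technical heart of the proof; everything after that is a finite enumeration.
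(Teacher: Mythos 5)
Your whole plan rests on the claim in your step (i): that every determinant $\det x[\bw;\bw']$ can be moved by $\Ginf$ into one whose combined support lies in a fixed window $[N]$ with $N=N(k,n)$ --- equivalently, that the $(k+1)\times(k+1)$ flattening determinants fall into finitely many $\Ginf$-orbits. This claim is false, and your justification for it does not hold up. Pairwise distinctness of the $w_i$ can be witnessed on very few positions, but it in no way bounds the number of ``non-redundant'' positions: already for $k=1$ take $w_1$ the all-ones word on $[m]$, $w_2$ the zero word, and $w'_1,w'_2$ unit words at positions $m+1,m+2$. At every position of $[m]$ the letters of $(w_1,w_2)$ differ, so the collapsing move you describe (which can only absorb a position where the tuple of letters is constant, via a monomial element of $\GL(V)$ acting there) removes nothing, and $m$ is unbounded. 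Moreover no cleverer reduction can rescue the statement: if the determinants lay in finitely many $\Ginf$-orbits, their span would be a finitely generated $\Ginf$-module; for $k=1$ that span is exactly the degree-$2$ part of the ideal of $\Yinf[1]=\Xinf[1]$, and the appendix of the paper (Lemma~\ref{lem:BOUNDED} together with the construction of lowest weight vectors of arbitrarily high complexity) shows this module is \emph{not} finitely generated, at least in characteristic zero. So you are attempting to prove a statement strictly stronger than the proposition, and that stronger statement is false.

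The proposition itself is only set-theoretic: finitely many orbits of determinants have the same common zero set as the full (infinite, non-finitely-many-orbit) family. Proving this requires a genuinely different ingredient, namely that vanishing of the ``small'' minors propagates to vanishing of all minors: one must show that if every contraction of a tensor to $V^{\otimes p_0}$ (with $p_0 = 2(k+1)\lfloor\log_2(k+1)\rfloor$) satisfies the rank-$\le k$ flattening conditions, then the tensor itself does. This is Lemma~\ref{lem:ContractionPP_0}, which reduces to Lemma~\ref{lem:SubspaceContraction}: a subspace $W\subseteq V^{\otimes p}$, $p>(k+1)\lfloor\log_2(k+1)\rfloor$, all of whose single-factor contractions have dimension at most $k$, itself has dimension at most $k$; the proof uses Borel's fixed point theorem on the closed locus in the Grassmannian and a dimension count for $B^p$-stable subspaces. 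Your proposal contains nothing playing this role, and the finite enumeration in your steps (ii)--(iii) cannot substitute for it.
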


The proposition is an immediate consequence of the following related result:
\begin{lem}\label{lem:ContractionPP_0}
Let $k \geq 0$ be fixed.  There is an integer $p_0$ such that whenever
$p > p_0$ and whenever $v\in  V^{\otimes p}$  is an element such that
for all pure contractions $\phi \colon V^{\otimes p} \to V^{\otimes
[p]-I}\cong V^{\otimes p_0}$ with $\vert I \vert = p-p_0$, $\phi(v)
\in \Yp{p_0}$, then $v\in \Yp{p}$.
\end{lem}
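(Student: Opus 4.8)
The plan is to prove the contrapositive of Lemma~\ref{lem:ContractionPP_0}: if $v\in V^{\otimes p}$ does not lie in $\Yp{p}$, then some pure contraction of $v$ onto exactly $p_0$ of its tensor factors has image outside $\Yp{p_0}$. It suffices to produce a pure contraction of $v$ onto \emph{some} subset $R\subseteq[p]$ with $|R|\le p_0$ whose image has a flattening of rank $>k$: one then picks $W$ with $R\subseteq W\subseteq[p]$ and $|W|=p_0$ (possible since $p>p_0$), contracts $v$ along $[p]\setminus W$ by the same pure functionals used on $[p]\setminus R$, and observes that this padded tensor maps onto the bad tensor by a further pure contraction; since a pure contraction never raises the rank of a flattening, the padded tensor too has a flattening of rank $>k$. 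So the task is, given $[p]=I\sqcup J$ with $\rk\flat_{I,J}(v)>k$, to find a pure contraction of $v$ onto at most $p_0=p_0(k,n)$ factors, with $p_0$ independent of $p$, preserving a flattening of rank $>k$.

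First I would witness the inequality $\rk\flat_{I,J}(v)>k$ by pure tensors, and then localise the witness. As pure tensors span $V^{\otimes I*}$ and $V^{\otimes J*}$, one can pick pure functionals $\eta_l=\bigotimes_{i\in I}\eta_l^{(i)}$ and $\xi_m=\bigotimes_{j\in J}\xi_m^{(j)}$, $l,m\in\{1,\dots,k+1\}$, for which $\bigl((\eta_l\otimes\xi_m)(v)\bigr)_{l,m}$ is invertible. Let $S\subseteq I$ consist of the factors at which $\eta_1^{(i)},\dots,\eta_{k+1}^{(i)}$ span at least two dimensions, and define $T\subseteq J$ symmetrically. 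At each $i\in I\setminus S$ the $\eta_l^{(i)}$ are all nonzero (else the $l$-th row vanishes) and pairwise proportional, to a common $\psi_i\in V^*$; hence $\eta_l$ is a scalar multiple of $\bigl(\bigotimes_{i\in I\setminus S}\psi_i\bigr)\otimes\bigl(\bigotimes_{i\in S}\eta_l^{(i)}\bigr)$ and lies in $\bigl(\bigotimes_{i\in I\setminus S}\psi_i\bigr)\otimes V^{\otimes S*}$, and likewise for $\xi_m$ over $T$. Contracting $v$ along $[p]\setminus(S\sqcup T)$ by the pure tensor equal to $\psi_m$ on each remaining factor $m$ gives $v'\in V^{\otimes(S\sqcup T)}$; writing $\eta_l=\alpha\otimes\rho_l$ and $\xi_m=\beta\otimes\sigma_m$, the matrix $\bigl((\rho_l\otimes\sigma_m)(v')\bigr)_{l,m}$ is obtained from the invertible matrix above by rescaling rows and columns, so it is again invertible; thus $\rk\flat_{S,T}(v')\ge k+1$ and $v'\notin\Yp{|S\sqcup T|}$. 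It remains to choose the $\eta_l,\xi_m$ so that $|S|$ and $|T|$ are bounded in terms of $k$ and $n$ only.

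This is the crux and the step I expect to be the real obstacle; a single-factor-at-a-time collapse cannot work unconditionally (e.g.\ when the one-directional slices of $v$ in some direction form a space of skew-symmetric matrices, no single factor can be absorbed into the common functional). My intended approach is an induction which, while $|I|$ is still above the target bound, asks whether some factor $i_1\in I$ admits a $\psi\in V^*$ with $\rk\flat_{I-i_1,J}\bigl((\psi\text{ at }i_1)v\bigr)\ge k+1$; if so, contract $i_1$ away and recurse on $I-i_1$. If no such $i_1$ exists, then for every $i_1$ the family $\{(\psi\text{ at }i_1)v:\psi\in V^*\}$ is a linear space of rank-$\le k$ matrices in $V^{\otimes(I-i_1)}\otimes V^{\otimes J}$, and one invokes the structure theory of spaces of bounded-rank matrices (compression spaces; Atkinson, Eisenbud--Harris): either all of them share a fixed $\le k$-dimensional column space in $V^{\otimes J}$, forcing $\rk\flat_{I,J}(v)\le k$ against our hypothesis, or they share a fixed $\le k$-dimensional row space in $V^{\otimes(I-i_1)}$, which confines $v$ to a tensor subspace small enough that $\rk\flat_{I,J}(v)$ is bounded purely in terms of $k$ and $n$; in that case the pure-tensor witness can be taken supported on boundedly many factors and we are finished. (An alternative is an induction on $k$ by deflation: using $\GL(V)^p$ to move a chosen nonzero coordinate of $v$ to the all-zero coordinate, the Schur complement of that entry is again the flattening of an honest tensor of rank one less; the delicate point is to keep track of the pivot row and column so that rank is not lost when passing back from the deflated tensor to $v$.) Granting the bound $|S|,|T|\le f(k,n)$, take $p_0:=\max\{2f(k,n),2\}$; then $v'$, padded as in the first paragraph, is a contraction of $v$ onto $p_0$ factors lying outside $\Yp{p_0}$, proving the lemma and hence Proposition~\ref{prop:YFinite}, since words supported on a fixed set of at most $p_0$ positions form only finitely many $\Ginf$-orbits, so finitely many orbits of determinants $\det x[\bw;\bw']$ cut out $\Yinf$.
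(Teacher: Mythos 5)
Your reduction steps are fine: passing to the contrapositive, padding a bad contraction on at most $p_0$ factors up to exactly $p_0$ factors, witnessing $\rk \flat_{I,J}(v)>k$ by pure functionals, and collapsing all factors where the witnesses are proportional --- all of that is correct. But the step you yourself flag as the crux, bounding the number of factors on which the witness must genuinely vary, is where the proposal has a real gap, and it is precisely where the content of the lemma lies. Your proposed resolution rests on the dichotomy ``either all slices $\flat\bigl((\psi\text{ at }i_1)v\bigr)$ share a $\le k$-dimensional column space, or they share a $\le k$-dimensional row space.'' That dichotomy is false: by the Atkinson and Eisenbud--Harris theory you invoke, bounded-rank matrix spaces need not be compression spaces at all --- the space of $3\times 3$ skew-symmetric matrices (all ranks $\le 2$, no common $2$-dimensional row or column space) is the standard ``primitive'' counterexample, and it is exactly the example you cite earlier as the obstruction to factor-by-factor collapse. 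Moreover, even in the row-compression branch your argument is incomplete: confining $v$ to $V_{i_1}\otimes R\otimes V^{\otimes J}$ with $\dim R\le k$ only bounds $\rk\flat_{I,J}(v)$ by roughly $nk$; it does not by itself produce a witness supported on boundedly many factors, and the recursion structure is lost because $V_{i_1}\otimes R$ is not a tensor-factor decomposition. The alternative deflation/Schur-complement sketch is likewise not carried out.

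The paper circumvents this difficulty in a different way, which you may find instructive: instead of normalizing the witness functionals, it proves a subspace statement (Lemma~\ref{lem:SubspaceContraction}): if every one-factor pure contraction of a subspace $W\subseteq V^{\otimes p}$ has dimension $\le k$ and $p>(k+1)\lfloor\log_2(k+1)\rfloor$, then $\dim W\le k$. The set of $(k+1)$-planes violating this is a closed $\GL(V)^p$-stable subvariety of the Grassmannian, so by Borel's fixed point theorem it would contain a $B^p$-stable subspace; such a subspace is a sum of at most $k+1$ ``staircase'' products $E_{i_1}\otimes\cdots\otimes E_{i_p}$, each with at most $\lfloor\log_2(k+1)\rfloor$ factors of dimension $\ge 2$, so for large $p$ some factor is $E_0$ in every summand and a contraction there preserves the dimension $k+1$ --- a contradiction. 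This handles exactly the primitive-space phenomenon your compression-space dichotomy cannot. Lemma~\ref{lem:ContractionPP_0} then follows with $p_0=2p_1$ by induction on $p$, applying the subspace lemma to the image $W$ of the flattening $\flat v$ on the larger side of the partition. To repair your argument you would need either this kind of group-theoretic positioning of the image subspace or some genuinely new way to bound the support of a rank witness; the compression-space route as stated does not close.
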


\begin{proof}[Proof of Proposition~\ref{prop:YFinite}]
Let $p_0$ be the integer whose existence is asserted by Lemma~\ref{lem:ContractionPP_0}.
Let $f_1,f_2,\dots,f_N \in T_{p_0}$ be the finitely many $(k+1) \times (k+1)$-determinants that generate the ideal of $\Yp{p_0}$. Of course, in the inclusion $T_{p_0} \subset \Tinf$, each $f_i$ is one of the $\det x[\bw;\bw']$ for $\bw,\bw'$
each lists of $k+1$ words supported in $[p_0]$.

We will now show that $v \in \Ainf$ is an element of $\Yinf$ if and only if
$f_i(gv) = 0$ for all $i$ and all $g \in \Ginf$.  Note that $f_i(gv)$ is equal
to $f_i((gv)_{p_0})$ where $(gv)_{p_0}$ is the image of $v$ in $V^{\otimes
p_0}$ under the canonical projection $\Ainf \to V^{\otimes p_0}$. Now if $v \in
\Yinf$, then obviously so is $gv$ for each $g \in \Ginf$, and hence
$(gv)_{p_0}$ is an element of $\Yp{p_0}$.
This shows the only if part.

For the converse, suppose that $f_i(gv) = 0$ for all $i$ and all $g
\in \Ginf$. We need to show that $v \in \Yinf$. Equivalently, we need
to show that for all $p$, the image $v_p \in V^{\otimes p}$ of $v$ lies
in $\Yp{p}$.  Recall that $f_i \in T_{p_0}$ is identified in $T_p$ with
$f_i$ precomposed with the pure contraction $\phi$ of the last $p-p_0$
factors along $x_0^{\otimes(p-p_0)}$.  Now if $\phi'\colon V^{\otimes p}
\to V^{\otimes p_0}$ is any other (nonzero) pure contraction, then there
is $g \in G_p$ such that $\phi'(v) = \phi(gv)$ for all $v \in V^{\otimes
p}$; indeed, the symmetric group $S_p$ can be used to ensure that the
same factors are being contracted, and $\GL(V)^p$ can be used to ensure
that the contraction takes place along the same pure tensor.  If $g \in
G_p$, then $(gv)_p = gv_p$, and it follows that our assumption implies
that $f_i(\phi'(v_p)) = 0$ for all $i$ and all pure contractions $\phi'
\colon V^{\otimes p} \to V^{\otimes p_0}$. By the lemma this means
that $v_p \in \Yp{p}$, and we are done.
\end{proof}

The proof of the lemma itself---while not difficult---requires some
algebraic geometry.  As it will turn out, the crucial point is the
following observation.

\begin{lem}\label{lem:SubspaceContraction}
Let $k \geq 0$. There is an integer $p_1$ such that whenever $p > p_1$
and $W \subseteq V^{\otimes p}$ is a subspace the following holds:
if for all $i$ and all pure contractions $\phi \colon V^{\otimes p}
\to V^{\otimes [p]-\{i\}}$, $\dim \phi(W) \leq k$, then $\dim W \leq k$.
\end{lem}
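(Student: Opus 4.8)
The plan is to reformulate the hypothesis in terms of a single big tensor. Pick a basis $w_1,\ldots,w_r$ of $W$ and stack them into one tensor $T := \sum_{j=1}^r e_j \otimes w_j \in K^r \otimes V^{\otimes p}$, so that $W$ is exactly the image of $T$ viewed as a linear map $(K^r)^* \to V^{\otimes p}$; thus $\dim W = \operatorname{rk} T$ as a $2$-tensor for the flattening $\{K^r\}\mid\{[p]\}$. A pure contraction $\phi\colon V^{\otimes p}\to V^{\otimes([p]-\{i\})}$ along $\xi\in V^*$ applied factor-wise gives $(\mathrm{id}_{K^r}\otimes\phi)(T)$, whose rank as a $2$-tensor $\{K^r\}\mid\{[p]-\{i\}\}$ is $\dim\phi(W)$. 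So the hypothesis says: for every $i$ and every $\xi\in V^*$, contracting $T$ in the $i$-th $V$-slot along $\xi$ drops the flattening-rank (over the $K^r$-part) to at most $k$; the conclusion says $\operatorname{rk} T\le k$. This is now purely a statement about a single tensor in $K^r\otimes V^{\otimes p}$ and the key point is that it must hold with a bound $p_1$ on $p$ that is \emph{independent of $r$}.

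First I would reduce to the case where no $V$-slot is ``inert'': if for some $i$ the map $T$, flattened as $\{K^r,\text{all }V\text{-slots except }i\}\mid\{V_i\}$, has image of dimension $\le k$, then $T$ lies in a subspace $(K^r\otimes V^{\otimes([p]-\{i\})})\otimes W_i'$ with $\dim W_i'\le k\le n$; choosing $\xi\in V^*$ restricting injectively to $W_i'$ recovers $T$ from its contraction up to an isomorphism, so $\operatorname{rk}T=\operatorname{rk}\phi_\xi(T)\le k$ and we are done immediately. Hence we may assume that for each $i$ the $i$-th $V$-slot of $T$ ``uses'' at least $k+1$ dimensions. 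Now the heart of the argument: I claim there is a $p_1$ (depending only on $k$ and $n$, not on $r$) such that once $p>p_1$, the hypothesis is simply impossible unless $\operatorname{rk}T\le k$. The intended mechanism is iterative peeling. Suppose $\operatorname{rk}T\ge k+1$; pick a flattening-minor of size $(k+1)\times(k+1)$ that is nonzero, witnessed by $k+1$ linearly independent vectors $u_0,\ldots,u_k$ in $W$ that remain independent modulo something after one contraction — more precisely, use that if $\dim\phi_\xi(W)\le k$ for \emph{all} $\xi$, then for every $\xi$ the $(k+1)$-dimensional space spanned by a fixed independent set must collapse, forcing a nonzero element of $\ker\phi_\xi$ on a $(k+1)$-dimensional subspace; ranging over enough $\xi$ (here $n=\dim V$ is fixed, so finitely many generic $\xi$ suffice), one produces, inside the $i$-th factor, a nonzero pure-tensor-type relation. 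Doing this simultaneously for all $p$ factors and combining via the pigeonhole principle — there are only finitely many ``types'' of local degeneration available when $\dim V=n$ and we track at most $k+1$ vectors — one finds, once $p$ exceeds a bound $p_1=p_1(k,n)$, two factors $i\ne i'$ whose degenerations are ``compatible'' in a way that lets one contract in slot $i$ \emph{without losing rank}, contradicting the chosen $(k+1)\times(k+1)$ minor being nonzero. I expect to phrase this cleanly by induction on $p$: the real content is a bound, for $2$-tensors of rank $\ge k+1$ inside $K^r\otimes V^{\otimes p}$, on how many of the $p$ tensor factors can be ``essential'' to maintaining rank $\ge k+1$ under all single-factor contractions.

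The main obstacle is exactly making that last step uniform in $r$: a priori the number of $(k+1)\times(k+1)$ minors, and the linear-algebra data needed to certify $\operatorname{rk}T\ge k+1$, grows with $r$, so a naive counting argument gives $p_1$ depending on $r$. The fix I would pursue is to note that only a $(k+1)$-dimensional subspace of $W$ is ever relevant to a fixed nonzero minor, and a $(k+1)$-dimensional subspace of $V^{\otimes p}$ already lives in $U^{(1)}\otimes\cdots\otimes U^{(p)}$ for subspaces $U^{(i)}\subseteq V$ with $\prod\dim U^{(i)}$ bounded in terms of $k$ — in particular $\dim U^{(i)}\le k+1$ and $U^{(i)}=V$ for all but at most $\log_2(k+1)$ values of $i$. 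So after the first reduction the tensor $T$ is, in all but boundedly many slots, ``full'' and the problem genuinely becomes one about a tensor whose essential combinatorics is controlled by $k$ and $n$ alone; the bound $p_1$ then emerges from a Ramsey-type / pigeonhole estimate on finitely many local degeneration types, with $r$ having disappeared. I would set this up carefully and let $p_1$ be whatever that combinatorial estimate produces, making no attempt to optimise it.
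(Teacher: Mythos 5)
There is a genuine gap, and in fact two of your key claims are false as stated. First, the ``inert slot'' reduction does not work: a pure contraction in slot $i$ is evaluation against a \emph{single} functional $\xi \in V^*$, which cannot ``restrict injectively'' to a subspace $W_i' \subseteq V$ of dimension $\geq 2$, and the conclusion you want from it fails. For example, with $n=2$, $k=3$ and $W=(V\otimes V)\otimes e_0^{\otimes(p-2)}$, the slot-$1$ support of $W$ is $2$-dimensional, yet \emph{every} contraction in slot $1$ has image of dimension $2<\dim W=4$; only a slot whose support is \emph{one}-dimensional is guaranteed to admit a dimension-preserving contraction. Second, and more seriously, the claim on which your uniformity fix rests --- that a $(k+1)$-dimensional subspace of $V^{\otimes p}$ lies in some $U^{(1)}\otimes\cdots\otimes U^{(p)}$ with $\prod_i \dim U^{(i)}$ bounded in terms of $k$, with $U^{(i)}=V$ in all but $\log_2(k+1)$ slots --- is false: already the line spanned by a single generic tensor has slot support equal to all of $V$ in every slot, so the product is $n^p$. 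That structure holds only for very degenerate subspaces, and supplying the degeneration is precisely the missing idea. The paper's proof gets it by observing that the locus $Z(k+1,k)$ of bad $(k+1)$-planes is a closed $\GL(V)^p$-stable subvariety of the (projective) Grassmannian, so if nonempty it contains, by Borel's fixed point theorem, a $B^p$-stable plane; such a plane is a sum of at most $k+1$ products $E_{i_1}\otimes\cdots\otimes E_{i_p}$ of flag subspaces, and only for these does your $\log_2(k+1)$ count apply, yielding (for $p>(k+1)\lfloor\log_2(k+1)\rfloor$) a slot where every summand has the one-dimensional factor $E_0$ and hence a contraction that preserves the dimension --- the desired contradiction.

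Your remaining mechanism (``finitely many types of local degeneration'', ``compatible degenerations in two slots'', a Ramsey/pigeonhole bound) is not specified in a checkable way, and without some substitute for the compactness-plus-Borel degeneration step it cannot be made uniform in $r$: the finitely-many-types count you invoke is exactly what fails for non-degenerate subspaces. The sound parts of your plan --- encoding $W$ as a tensor in $K^r\otimes V^{\otimes p}$, and the final pigeonhole over slots once a bounded number of ``essential'' slots is established --- do mirror the paper's concluding count, but the reduction that legitimises that count is absent.
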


\begin{proof}
We will show that $p_1 = (k+1)\lfloor \log_2(k+1) \rfloor$ works.
Let $G(d,V^{\otimes p})$ denote the Grassmannian of $d$-planes in
$V^{\otimes p}$, which is a projective algebraic variety over $K$. Set
\[
Z(d,k) := \{ W \in G(d,V^{\otimes p}) \mid \dim \phi(W) \leq k \text{
for all contractions } \phi \},
\]
a closed subvariety of $G(d,V^{\otimes p})$. 
The assertion of the lemma is equivalent to the statement that $Z(d,k)
= \emptyset$ if $d > k$ and $p>p_1$. To prove this, we proceed by
induction on $d$. If $W \in Z(d,k)$, then all hyperplanes of $W$ are
elements of $Z(d-1,k)$, hence if the latter variety is empty, then
so is $Z(d,k)$. Thus, in the following we may assume that $d = k + 1$
and force a contradiction.

So suppose that $Z(k+1,k)$ is nonempty. We will use that it is
stable under $\GL(V)^p \subseteq G_p$. Denote by $B$ the group
of upper triangular matrices in $\GL(V)$ with respect to the basis
$e_0,\ldots,e_{n-1}$ of $V$ dual to $x_0,\ldots,x_{n-1}$.  Then $B^p$
is a connected solvable algebraic group and by Borel's Fixed Point
Theorem (\cite{Borel91}, Theorem~15.2), $B^p$ must have a fixed point $W$
on the projective algebraic variety $Z(d,k)$. This means that $W$ is a
$B^p$-stable subspace of $V^{\otimes p}$.

Let $D \subseteq \GL(V)$ denote the subset of diagonal matrices. Then
$D \subseteq B$ and hence $W$ is also $D^p$-stable. Any $D^p$-stable
subspace is spanned by common eigenvectors for $D^p$ (any algebraic
representation of $D^p$ is diagonalisable). Now $v \in V^{\otimes p}$ is
a $D^p$-eigenvector if and only if $v = e_{i_1} \otimes e_{i_2} \otimes
\dots \otimes e_{i_p}$ (up to a nonzero scalar) for some $i_1,\ldots,i_p$
with $0\leq i_k \leq n-1$. If such a $v$ is in $W$, then also the linear
span of $B^pv$ is. We claim that
\[ \la B^p v\ra = E_{i_1} \otimes E_{i_2} \otimes \dots \otimes E_{i_p}
= : E \]
where $E_i = \la e_0,e_1,\dots,e_i\ra \subseteq V$ is the $B$-stable
subspace generated by $e_i$.  Notice that $E$ is clearly $B^p$-stable. The
only non-trivial assertion is therefore that $\la B^p v\ra$ contains $E$.
This is seen most easily by considering the action of the Lie algebra
$\bbb^p$ of $B^p$ where $\bbb$ is the Lie algebra of $B$, ie.\ the
space of all upper triangular matrices. Since $B^p$ maps $W$ to itself,
so does $\bbb^p$.  An element $X = (X_1,X_2,\dots,X_p)$ of $\bbb^p$
acts on $V^{\otimes p}$ by
\[ X(v_1\otimes v_2 \otimes \dots \otimes v_p) = \sum_{i=1}^p v_1\otimes v_2 \otimes \dots \otimes v_{i-1} \otimes X_i(v_i) \otimes v_{i+1} \otimes \dots \otimes v_p.\]
Thus, if for $i \leq j$ we denote the element of $\bbb$ that maps $e_j$
to $e_i$ and everything else to zero by $E_{ij}$, then if $j_1 \leq i_1$,
$j_2 \leq i_2$, \dots, $j_p \leq i_p$
\[
 e_{j_1} \otimes e_{j_2} \otimes \dots \otimes e_{j_p} = (E_{j_1i_1},0,\dots,0)(0,E_{j_2i_2},0,\dots,0)\cdots(0,\dots,0,E_{j_pi_p})(v).
\]
It follows that indeed, $E \subseteq W$.  Since $W$ is by assumption
spanned by $k + 1$ such eigenvectors $v_i$, and hence $W = \sum_i \la
B^p v_i \ra$, we find that
\[ W = \sum_{a=1}^{k+1} F_a \]
where each $F_a$ is a tensor product of some of the $E_j$.  Now for a
fixed $a$, let $F_a = E_{i_1} \otimes \dots \otimes E_{i_p}$. Then most
$i_k$ must be equal to $1$: indeed, the dimension of $F_a$ is $(i_1+1)
(i_2+2) \cdots (i_p+1) \leq k + 1$, hence at most $\lfloor \log_2 (k+1)
\rfloor$ of the $i_j$ can be $1$ or larger. We conclude that there are
at most $p_1=(k+1)\lfloor \log_2(k+1) \rfloor$ indices where at least
one $F_a$ has a factor of dimension at least $2$. Since $p>p_1$, there
is at least one index $i$ where all $F_a$ have a one-dimensional factor
$E_0$. Thus, in the flattening corresponding to $\{ i\}$, $[p] - \{i\}$,
$W$ corresponds to a space of the form
\[ W = E_0 \otimes H \]
where $\dim H = \dim W$. Now any linear function on $V$ that is not zero
on $E_0$ yields a contraction of $W$ of dimension equal to $\dim H =
\dim W = k + 1$ -- a contradiction.
\end{proof}

\begin{proof}[Proof of Lemma~\ref{lem:ContractionPP_0}]
Let $p_0 = 2p_1 = 2(k+1)\lfloor \log_2(k+1) \rfloor$.  We will proceed by
induction on $p > p_0$. So let $v \in V^{\otimes p}$ be an element such
that for all subsets $I \subset [p]$ with $p-p_0$ elements, the image
of $v$ under any contraction $V^{\otimes p} \to V^{\otimes [p]-I} \cong
V^{\otimes p_0}$ is an element of $\Yp{p_0}$.  Note that if $p > p_0 +
1$, then for the image $v'$ of $v$ under any contraction $V^{\otimes p}
\to V^{\otimes [p]-\{i\}}$, any pure contraction $V^{\otimes [p]-\{i\}}
\to V^{\otimes [p]-\{i\}-I}$ takes $v'$ to an element of $\Yp{p_0}$,
where now $I$ ranges over all subsets with $p - 1 - p_0$ elements. As
a consequence, if $p > p_0 + 1$, then we may assume by induction that $v'$ is
an element of $\Yp{p-1}$. Thus, we need to show the following: Given $v
\in V^{\otimes p}$ such that all pure contractions of $v$ in $V^{\otimes
[p]-\{i\}}$ are elements of $\Yp{p-1}$, then $v \in \Yp{p}$, provided
$p > p_0$.

In order to see this, let $[p] = I \cup J$ be any partition and consider
the corresponding flattening
\[
 \flat \colon V^{\otimes p} \to V^{\otimes I} \otimes V^{\otimes J}.
\]
After exchanging the roles of $I$ and $J$ if necessary, we may assume
that $\vert J \vert  > p_1$. The statement that all $(k+1) \times
(k+1)$-subdeterminants on $\flat v$ are zero is equivalent to the
statement that same as saying that $\flat v$ has rank at most $k$ when
regarded as a linear map from $(V^{\otimes I})^*$ to $V^{\otimes J}$,
or, in other words, that the image $W \subseteq V^{\otimes J}$ of this
map has dimension at most $k$.

Since $\vert J \vert > p_1$ we may apply
Lemma~\ref{lem:SubspaceContraction} to $W$. Indeed, for each $j \in J$,
all contractions in factor $j$ of $V^{\otimes J}$ map $W$ to subspaces of
$V^{\otimes J-\{j\}}$ of dimension at most $k$. This follows form the
fact that this subspace is equal to the image $W'$ of the map $(V^{\otimes
I})^* \to V^{\otimes J-j}$ obtained by first applying $\flat v$ and then
contracting $V^{\otimes J} \to V^{\otimes J-i}$. This on the other hand,
is nothing but the map $\flat' (v')$ where $v'$ is the image of $v$
under the same contraction but applied to $V^{\otimes p}$, and $\flat'$
is the flattening of $[p]-\{j\}$ along $I,J-j$.  Since $v'$ gives rise
to a map of rank at most $k$ by assumption, $\dim W' \leq k$ as claimed.
Now this holds for all contractions and all factors and we may conclude
that, indeed, $\dim W \leq k$, and $\flat v$ has rank at most $k$.
\end{proof}

\section{Equivariantly Noetherian rings and spaces} \label{sec:EqNoeth}

We briefly recall the notions of equivariantly Noetherian rings and
topological spaces, and proceed to prove the main result of this section,
namely, that $\Yinf$ is $\Ginf$-Noetherian (Theorem~\ref{thm:YEqNoether}).

If a monoid $\Pi$ acts by means of endomorphisms on a commutative
ring $R$ (with $1$), then we call $R$ \emph{equivariantly Noetherian}, or
$\Pi$-Noetherian, if every chain $I_1 \subseteq I_2 \subseteq \ldots$
of $\Pi$-stable ideals stabilises. This is equivalent to the statement
that every $\Pi$-stable ideal in $R$ is generated by finitely many
$\Pi$-orbits. Similarly, if $\Pi$ acts on a topological space $X$ by
means of continuous maps $X \to X$, then we call $X$ equivariantly
Noetherian, or $\Pi$-Noetherian, if every chain $X_1 \supseteq X_2
\supseteq \ldots$ of $\Pi$-stable closed subsets stabilises. If $R$ is
an $K$-algebra then we can endow the set $X$ of $K$-valued points of $R$,
i.e., $K$-algebra homomorphisms $R \to K$ (sending $1$ to $1$), with the
Zariski topology. An endomorphism $\Psi: R \to R$ gives a continuous map
$\psi:X \to X$ by pull-back, and if $R$ has a left $\Pi$-action making
it equivariantly Noetherian, then this induces a right $\Pi$-action on
$X$ making $X$ equivariantly Noetherian. This means, more concretely,
that any $\Pi$-stable closed subset of $X$ is defined by the vanishing of
finitely many $\Pi$-orbits of elements of $R$. If $\Pi$ happens to be a
group, then we can make the right action into a left action by taking
inverses. Here are some further easy lemmas; for their proofs we refer
to \cite{Draisma08b}.

\begin{lm}\label{lm:Closed}
If $X$ is a $\Pi$-Noetherian topological space, then any 
$\Pi$-stable subset of $X$ is $\Pi$-Noetherian with respect to the
induced topology.
\end{lm}

We will only need this statement for closed subsets, for which the proof
is contained in \cite{Draisma08b}. For completeness, we prove the general
case here.

\begin{proof}
Let $Y$ be any $\Pi$-stable subset of $X$, and let $Y_1 \supseteq
Y_2 \supseteq \ldots$ be a sequence of $\Pi$-stable closed subsets of
$Y$. Define $X_i$ as the closure of $Y_i$ in $X$, so that $X_i \cap Y=Y_i$
by properties of the induced topology. A straightforward verification
shows that each $X_i$ is $\Pi$-stable, hence as $X$ is
$\Pi$-Noetherian we find that the sequence $X_1 \supseteq
X_2 \supseteq \ldots$ stabilises. But then the
sequence $Y_1=X_1 \cap Y \supseteq Y_2=X_2 \cap Y \supseteq
\ldots$ stabilises, as well.
\end{proof}

\begin{lm}\label{lm:Union}
If $X$ and $Y$ are $\Pi$-Noetherian topological spaces, then the disjoint
union $X \cup Y$ is also $\Pi$-Noetherian with respect to the disjoint
union topology and the natural action of $\Pi$.
\end{lm}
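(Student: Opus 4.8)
The plan is to unwind the definition of the disjoint-union topology and reduce the claim to the Noetherianity hypotheses on $X$ and $Y$ separately. First I would recall that the closed subsets of $X \cup Y$ are exactly the sets of the form $A \cup B$ with $A \subseteq X$ closed and $B \subseteq Y$ closed, and that the $\Pi$-action on $X \cup Y$ restricts to the given actions on the two pieces (since $\Pi$ sends $X$ to $X$ and $Y$ to $Y$). Hence a $\Pi$-stable closed subset of $X \cup Y$ is precisely a set $A \cup B$ where $A$ is a $\Pi$-stable closed subset of $X$ and $B$ is a $\Pi$-stable closed subset of $Y$.

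Next I would take an arbitrary descending chain $Z_1 \supseteq Z_2 \supseteq \cdots$ of $\Pi$-stable closed subsets of $X \cup Y$ and write $Z_i = A_i \cup B_i$ as above. The containment $Z_i \supseteq Z_{i+1}$ is equivalent to the pair of containments $A_i \supseteq A_{i+1}$ and $B_i \supseteq B_{i+1}$, because $X$ and $Y$ are disjoint. So we obtain a descending chain of $\Pi$-stable closed subsets of $X$ and, independently, one of $Y$. By the $\Pi$-Noetherianity of $X$, the chain $(A_i)$ stabilises, say for $i \geq N_1$; by that of $Y$, the chain $(B_i)$ stabilises for $i \geq N_2$. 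Then for $i \geq \max(N_1, N_2)$ we have $Z_i = A_i \cup B_i = A_{i+1} \cup B_{i+1} = Z_{i+1}$, so the original chain stabilises. This proves that $X \cup Y$ is $\Pi$-Noetherian.

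There is essentially no obstacle here: the only thing to be a little careful about is the bookkeeping that closed-and-$\Pi$-stable subsets of the disjoint union decompose cleanly into their intersections with the two summands, and that this decomposition is compatible with inclusions. One could alternatively phrase the argument lattice-theoretically — the lattice of $\Pi$-stable closed subsets of $X \cup Y$ is the product of the corresponding lattices for $X$ and $Y$, and a product of two lattices satisfying the descending chain condition again satisfies it — but the direct argument above is shortest. An obvious induction then extends the statement to any finite disjoint union, which is the form in which it will be applied.
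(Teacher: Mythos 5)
Your proof is correct: the lattice of $\Pi$-stable closed subsets of the disjoint union is the product of those of $X$ and $Y$, so descending chains stabilise componentwise, which is exactly the standard argument. The paper itself does not spell out a proof of this lemma (it refers to \cite{Draisma08b} for it), and your argument is precisely the expected one, so there is nothing to compare beyond noting agreement.
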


\begin{lm}\label{lm:Image}
If $X$ is a $\Pi$-Noetherian topological space, $Y$ is a topological
space with $\Pi$-action (by means of continuous maps), and $\psi:X \to Y$
is a $\Pi$-equivariant continuous map, then $\im \psi$ is $\Pi$-Noetherian
with respect to the topology induced from $Y$.
\end{lm}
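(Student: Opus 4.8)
The plan is the evident ``pull back, apply Noetherianity of $X$, push forward'' argument, using equivariance at both ends and surjectivity onto the image at the push-forward step. Write $Z:=\im\psi$, equipped with the topology induced from $Y$. First I would check that $Z$ is $\Pi$-stable and carries a $\Pi$-action by continuous maps: for $\pi\in\Pi$ and $y=\psi(x)\in Z$, equivariance of $\psi$ gives $\pi y=\pi\psi(x)=\psi(\pi x)\in Z$, so each continuous map $\pi\colon Y\to Y$ restricts to a continuous map $Z\to Z$.

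Next, let $Z_1\supseteq Z_2\supseteq\cdots$ be an arbitrary descending chain of $\Pi$-stable closed subsets of $Z$, and set $X_i:=\psi^{-1}(Z_i)\subseteq X$. Each $Z_i$ is of the form $C_i\cap Z$ with $C_i$ closed in $Y$, whence $X_i=\psi^{-1}(C_i)$ is closed in $X$ by continuity of $\psi$. Moreover each $X_i$ is $\Pi$-stable: if $x\in X_i$ and $\pi\in\Pi$, then $\psi(\pi x)=\pi\psi(x)\in\pi Z_i\subseteq Z_i$ since $Z_i$ is $\Pi$-stable, so $\pi x\in X_i$. Thus $X_1\supseteq X_2\supseteq\cdots$ is a descending chain of $\Pi$-stable closed subsets of $X$, and since $X$ is $\Pi$-Noetherian there is an $N$ with $X_i=X_N$ for all $i\geq N$.

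Finally I would push the chain back down. Because $Z_i\subseteq Z=\im\psi$, we have $\psi(X_i)=\psi(\psi^{-1}(Z_i))=Z_i$ for every $i$; hence $Z_i=\psi(X_i)=\psi(X_N)=Z_N$ for all $i\geq N$, so the chain of $Z_i$ stabilises. Since the chain was arbitrary, $\im\psi$ is $\Pi$-Noetherian.

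There is no serious obstacle here; the argument is a routine diagram chase. The only two points that need a little care are that ``closed in $\im\psi$'' must be read as ``trace on $\im\psi$ of a closed subset of $Y$,'' so that preimages under $\psi$ are genuinely closed in $X$, and that the final push-forward step uses surjectivity of $\psi$ onto its image to upgrade $\psi(\psi^{-1}(Z_i))\subseteq Z_i$ to an equality.
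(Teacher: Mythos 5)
Your argument is correct and is exactly the routine pull-back/push-forward proof that the paper omits (it refers to \cite{Draisma08b} for this lemma): preimages of the $\Pi$-stable closed chain are $\Pi$-stable and closed in $X$ by equivariance and continuity, the chain upstairs stabilises by $\Pi$-Noetherianity of $X$, and $\psi(\psi^{-1}(Z_i))=Z_i$ since $Z_i\subseteq\im\psi$ pushes the stabilisation back down. The two points you flag (reading ``closed'' in the induced topology and using surjectivity onto the image) are indeed the only places where care is needed, and you handle both.
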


\begin{lm} \label{lm:Quot}
If $\Pi$ is a group and $\Pi' \subseteq \Pi$ a subgroup acting from the
left on a topological space $X'$, and if $X'$ is a $\Pi'$-Noetherian,
then the orbit space $X:=(\Pi \times X')/\Pi'$ is a left-$\Pi$-Noetherian
topological space.
\end{lm}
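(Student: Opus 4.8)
The strategy is to describe the $\Pi$-stable closed subsets of $X=(\Pi\times X')/\Pi'$ explicitly in terms of $\Pi'$-stable closed subsets of $X'$, and then transfer the ascending (here descending) chain condition across this correspondence. First I would fix notation: write $q\colon \Pi\times X'\to X$ for the quotient map, where $\Pi'$ acts on $\Pi\times X'$ by $\gamma\cdot(\pi,x')=(\pi\gamma^{-1},\gamma x')$, and let $\Pi$ act on $X$ by left multiplication on the first coordinate, which is well defined since it commutes with the $\Pi'$-action. Give $\Pi$ the discrete topology (or at least the topology making the $\Pi'$-action on $\Pi\times X'$ continuous); then $X$ is, as a space, the disjoint union $\coprod_{\pi\Pi'\in\Pi/\Pi'}\{\pi\Pi'\}\times X'$ once one picks coset representatives, and the embedding $X'\hookrightarrow X$, $x'\mapsto q(1,x')$, is a homeomorphism onto the ``$\Pi'$-slice''. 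The key structural observation is that a subset $C\subseteq X$ is $\Pi$-stable if and only if it is of the form $\Pi\cdot(\{1\}\times C')$ for $C'=\{x'\in X': q(1,x')\in C\}\subseteq X'$, and that $C$ is closed in $X$ exactly when $C'$ is closed in $X'$ (using the disjoint-union description above and the fact that left translation by $\Pi$ permutes the slices homeomorphically). Moreover $C'$ is automatically $\Pi'$-stable because $C$ is $\Pi$-stable and $\Pi'\subseteq\Pi$ fixes the coset $1\cdot\Pi'$ setwise.

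Granting this dictionary, the proof concludes quickly: given a descending chain $X_1\supseteq X_2\supseteq\cdots$ of $\Pi$-stable closed subsets of $X$, intersect each with the slice $\{1\}\times X'\cong X'$ to obtain a descending chain $X_1'\supseteq X_2'\supseteq\cdots$ of $\Pi'$-stable closed subsets of $X'$. Since $X'$ is $\Pi'$-Noetherian this chain stabilises, say from index $m$ on; and since each $X_i$ is recovered from $X_i'$ as $\Pi\cdot(\{1\}\times X_i')$, the original chain stabilises from index $m$ on as well. Hence $X$ is $\Pi$-Noetherian. (If one prefers the equivalent formulation via finitely many orbits: a $\Pi$-stable closed $C$ corresponds to a $\Pi'$-stable closed $C'$ cut out by finitely many $\Pi'$-orbits of ``equations'' in whatever sense $X'$ admits, and those same equations, spread out over the $\Pi$-orbit, cut out $C$ in $X$; but the closed-set chain formulation above avoids needing $X'$ to carry a coordinate ring.)

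The main thing to be careful about—and the only real obstacle—is verifying the claimed equivalence ``$C$ closed and $\Pi$-stable $\iff$ $C=\Pi\cdot(\{1\}\times C')$ with $C'$ closed and $\Pi'$-stable'', in particular that the quotient topology on $X$ restricts on each slice to the original topology of $X'$ and that distinct slices are topologically disconnected from one another (so that closedness can be checked slice by slice). This is where the hypothesis that $\Pi$ is a \emph{group} and that $X'$ is acted on from the \emph{left} by the subgroup $\Pi'$ is used: it guarantees that $\Pi\times X'\to X$ is a covering-type quotient, with $\Pi$ permuting the fibres freely, so no slice accumulates onto another. I would spell this out as a short topological lemma. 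Everything else—$\Pi$-stability, the bijection between chains, transporting the stabilisation index—is a routine diagram chase that I would not belabour.
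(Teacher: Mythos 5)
Your proposal is correct and follows essentially the same route as the paper, which proves the lemma precisely by noting that (with $\Pi$ discrete and the quotient topology on $X$) closed $\Pi$-stable subsets of $X$ correspond one-to-one with closed $\Pi'$-stable subsets of $X'$, so descending chains transfer. Your write-up merely spells out the slice-by-slice verification of that correspondence, which the paper leaves implicit.
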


In this lemma, $\Pi \times X'$ carries the direct-product topology
of the discrete group $\Pi$ and the topological space $X'$, the right
action of $\Pi'$ on it is by $(\pi,x')\sigma=(\pi \sigma, \sigma^{-1}
x)$, and the topology on the quotient is the coarsest topology
that makes the projection continuous. The left action of $\Pi$ on the
quotient comes from left-action of $\Pi$ on itself.  As a consequence,
closed $\Pi$-stable sets in $X$ are in one-to-one correspondence with
closed $\Pi'$-stable sets in $X'$, whence the lemma. Next we recall a
fundamental example of an equivariantly Noetherian ring, which will be
crucial in what follows.

\begin{thm}[\cite{Cohen67,Hillar09}] \label{thm:RlN}
For any Noetherian ring $Q$ and any $l \in \NN$, the ring $Q[x_{ij}
\mid i=1,\ldots,l;\ j=1,2,3,4,\ldots]$ is equivariantly Noetherian with
respect to the action of $\Inc(\NN)$ by $\pi x_{ij}=x_{i\pi(j)}.$
\end{thm}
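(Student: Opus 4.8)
The plan is to prove this by a Gröbner‑basis argument that reduces the statement to a purely combinatorial finiteness fact about monomials, which is then supplied by the lemmas of Dickson and Higman. Write $R := Q[x_{ij} \mid i \in [l],\ j \in \NN]$ and let $M$ be its multiplicative monoid of monomials. The first step is to fix a term order $\prec$ on $M$ that is a multiplicative well‑order and is moreover compatible with the $\Inc(\NN)$-action, in the sense that $m \prec m'$ implies $\pi m \prec \pi m'$ for all $\pi \in \Inc(\NN)$. One such order: grade by total degree, and within a fixed degree compare monomials by the lexicographic order on the sorted list of their variables‑with‑multiplicity, each variable $x_{ij}$ contributing the pair $(j,i)$ and the list sorted increasingly. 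Since $\pi$ acts on variables by the strictly increasing substitution $j \mapsto \pi(j)$, it maps the sorted list of $m$ entrywise‑order‑preservingly to that of $\pi m$, so $\prec$ is $\Inc(\NN)$-compatible; a short argument with the statistics ``number of list entries $\le v$'' shows it is also multiplicative. For an ideal $I \subseteq R$ let $\mathrm{in}(I)$ be the $R$-submodule spanned by the leading terms $\mathrm{lc}(f)\cdot\mathrm{lm}(f)$, $f \in I \setminus \{0\}$. Because $\Inc(\NN)$ acts by $Q$-algebra endomorphisms fixing $Q$ and $\prec$ is $\Inc(\NN)$-compatible, we get $\mathrm{lm}(\pi f) = \pi\,\mathrm{lm}(f)$ and $\mathrm{lc}(\pi f) = \mathrm{lc}(f)$, hence $\mathrm{in}(I)$ is $\Inc(\NN)$-stable whenever $I$ is. By the usual standard‑basis reduction it then suffices to prove: \emph{(i)} every ascending chain of $\Inc(\NN)$-stable $R$-submodules of $R$ generated by terms stabilises; and \emph{(ii)} if $\mathrm{in}(I_1) \subseteq \mathrm{in}(I_2) \subseteq \cdots$ is eventually constant, so is $I_1 \subseteq I_2 \subseteq \cdots$ — which is the familiar argument reducing elements of $I_{k+1}$ modulo finitely many $\Inc(\NN)$-orbit generators of $I_k$, carried out over the Noetherian ring $Q$.

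The heart of the proof is (i). The key point is that the quasi‑order on $M$ defined by ``$m \preceq m'$ whenever $\pi m \mid m'$ for some $\pi \in \Inc(\NN)$'' is a well‑quasi‑order. To see this, encode a monomial $m = \prod_{i,j} x_{ij}^{a_{ij}}$ by the word $W(m) \in (\NN^l)^*$ whose letters are the exponent columns $(a_{1c},\dots,a_{lc}) \in \NN^l$ for those $c \in \NN$ actually occurring in $m$, listed in increasing order of $c$. A direct check shows that $\pi m \mid m'$ for some $\pi \in \Inc(\NN)$ if and only if $W(m)$ occurs in $W(m')$ as a (not necessarily contiguous) subword with each matched letter coordinatewise dominated by its image — i.e.\ $W(m) \le W(m')$ in Higman's subword order over the poset $(\NN^l, \le)$. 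By Dickson's lemma $(\NN^l,\le)$ is a well‑quasi‑order, so Higman's lemma shows $(\NN^l)^*$, and hence $M$, is a well‑quasi‑order. Consequently $M$ has no infinite antichain and no infinite strictly $\preceq$-descending chain, so every $\preceq$-upward‑closed subset of $M$ is the upward closure of a finite subset; equivalently, the ascending chain condition holds for $\preceq$-upward‑closed subsets of $M$. When $Q$ is a field this is already (i), since a term‑generated $\Inc(\NN)$-stable submodule of $R$ is then precisely the span of a $\preceq$-upward‑closed set of monomials. For general Noetherian $Q$ one records, for each $m \in M$, the leading‑coefficient ideal $C(m) := \{q \in Q \mid q\,m \text{ lies in the submodule}\} \subseteq Q$; this defines an order‑preserving map $(M,\preceq) \to (\{\text{ideals of }Q\},\subseteq)$, and (i) follows by interleaving the ACC for $\preceq$-upward‑closed subsets of $M$ with the Noetherianity of $Q$ through a finite stratification of $M$, exactly as in the classical proof that $Q[x_1,\dots,x_n]$ admits finite Gröbner bases over a Noetherian coefficient ring.

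I expect the main difficulty to be bookkeeping rather than a single hard idea: one must settle on a term order that is genuinely $\Inc(\NN)$-compatible (not merely $\Sinf$-compatible), verify that initial submodules are $\Inc(\NN)$-stable and that the reduction in (ii) terminates, and, for non‑field $Q$, carry out the stratification that weaves the well‑quasi‑ordering of $M$ together with the Noetherianity of $Q$. One subtlety deserves emphasis: $\Inc(\NN)$ is only a monoid, so the passage from ``$M$ is a well‑quasi‑order'' to ``$\Inc(\NN)$-stable term‑ideals are finitely generated up to the action'' cannot use any inverses; it must go through the ACC for $\preceq$-upward‑closed subsets, as above. By contrast, the combinatorial input — Dickson's lemma feeding Higman's lemma — is entirely standard, and it is where all of the phenomenon special to infinitely many variables is concentrated.
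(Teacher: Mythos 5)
Your overall route---an $\Inc(\NN)$-compatible multiplicative well-order, the observation that initial ideals of $\Inc(\NN)$-stable ideals are $\Inc(\NN)$-stable, reduction to an ACC for term-generated stable ideals, and a well-quasi-order on monomials supplied by Dickson plus Higman, with leading-coefficient ideals handling a general Noetherian $Q$---is essentially the route of the sources cited for this theorem, and most of your steps check out (your term order is indeed a multiplicative well-order compatible with $\Inc(\NN)$, and the reduction (ii) and the interleaving with Noetherianity of $Q$ are the standard arguments). However, there is a genuine error at the step you yourself call the heart of the proof. Your encoding $W(m)$ records only the \emph{nonzero} exponent columns, and the claimed equivalence ``$\pi m \mid m'$ for some $\pi \in \Inc(\NN)$ if and only if $W(m) \le W(m')$ in Higman's order'' is false in the direction you need. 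The point is that an increasing matching of the nonzero columns of $m$ into dominating columns of $m'$ need not extend to a map $\pi\colon \NN \to \NN$ that is increasing on \emph{all} of $\NN$: the gaps between matched columns in $m'$ must be at least the corresponding gaps in $m$, and your encoding forgets the gaps. Concretely, take $l=1$, $m = x_{11}x_{13}$ and $m' = x_{11}x_{12}$. Then $W(m) = W(m')$, so $W(m) \le W(m')$ in Higman's order, but any $\pi \in \Inc(\NN)$ with $\pi m \mid m'$ would need $\pi(1)=1$, $\pi(3)=2$ and $1 < \pi(2) < 2$, which is impossible. Since your derivation of the well-quasi-order property of $\preceq$ uses exactly the implication ``Higman $\Rightarrow$ divisibility up to $\Inc(\NN)$'' (the ``only if'' direction, which does hold, is not the one used), the key lemma is not proved as written.

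The gap is repairable by the standard device: encode $m$ by the word of \emph{all} exponent columns $1,2,\ldots,\max\supp(m)$, zero columns included (a finite word over $\NN^l$, so Dickson and Higman still apply). If the padded word of $m$ embeds into the padded word of $m'$ with coordinatewise domination, the embedding is an increasing map $\phi$ defined on the whole initial segment $[\max\supp(m)]$, and one obtains $\pi \in \Inc(\NN)$ by setting $\pi(j)=\phi(j)$ there and $\pi(j)=\max\supp(m')+j-\max\supp(m)$ beyond; then $\pi m \mid m'$, which is all that is needed to conclude that $\preceq$ is a well-quasi-order. With this one-line change to the encoding your argument goes through and coincides in substance with the proof in the cited references (Higman's lemma applied to column words for the combinatorial core, plus the leading-term and leading-coefficient machinery to pass from a field to a Noetherian coefficient ring).
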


Main Theorems I, II, and III will be derived from the following theorem,
whose proof needs the rest of this section.

\begin{thm} \label{thm:YEqNoether}
For every natural number $k$ the variety $\Yinf$ is a $\Ginf$-Noetherian
topological space.
\end{thm}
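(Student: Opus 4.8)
The plan is to exhibit $\Yinf$ as the image of an equivariantly Noetherian space under an $\Inc(\NN)$-equivariant continuous map, and then invoke Lemmas~\ref{lm:Image} and~\ref{lm:Closed} together with Theorem~\ref{thm:RlN}. The starting observation is that, by Proposition~\ref{prop:YFinite}, $\Yinf$ is cut out by $(k+1)\times(k+1)$-determinants $\det x[\bw;\bw']$; a point $v \in \Yinf$ is precisely an infinite-dimensional tensor all of whose flattenings have rank at most $k$. The idea is to ``resolve'' such a tensor by a pair of maps recording a rank-$\leq k$ factorisation. Concretely, I would consider the space $\widetilde{Y}$ of pairs $(A,B)$ where $A$ is an $\NN$-indexed family of row vectors and $B$ an $\NN$-indexed family of column vectors of length $k$, glued so that the product recovers the flattened entries $x_{w}$; more precisely, one builds a parametrising variety whose coordinate ring is a polynomial ring in $k$ copies of countably many variables, so that Theorem~\ref{thm:RlN} (with $l=2k$ or so, and $Q=K$) applies to make it $\Inc(\NN)$-Noetherian.

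The key technical point is to set this up so the parametrisation map is $\Inc(\NN)$-equivariant, not merely $\Sinf$-equivariant. This is exactly the subtlety flagged in Section~\ref{sec:Infinite}: $\Inc(\NN)$ acts on $\Tinf$ on the left by injective endomorphisms (appending zeros in the new positions), hence on $\Ainf$ on the right by surjections, and the parametrising space must carry a matching $\Inc(\NN)$-action. The natural choice is to index the auxiliary variables by finite-support words (or, after splitting $[p]$ into the ``moving'' part and the tail, by the relevant finite data) in such a way that a $\pi \in \Inc(\NN)$ relabels indices and fills the new slots with the distinguished value corresponding to $x_0$. First I would make precise the two-sided nature of the contraction $\pi_p$ and check that the factorisation data transforms correctly under appending a $0$-coordinate; this is where the asymmetry between left and right actions is essential, since one cannot invert $\pi$.

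Once the parametrising space $\widetilde{Y}$ is identified as $\Inc(\NN)$-Noetherian via Theorem~\ref{thm:RlN} (possibly after taking a finite disjoint union over choices of which $k$ flattening directions to factor, using Lemma~\ref{lm:Union}), and once the parametrisation $\psi \colon \widetilde{Y} \to \Ainf$ is checked to be continuous and $\Inc(\NN)$-equivariant with image exactly $\Yinf$, Lemma~\ref{lm:Image} gives that $\Yinf$ is $\Inc(\NN)$-Noetherian. To upgrade from $\Inc(\NN)$ to $\Ginf = \Sinf \ltimes \GL(V)^{(\infty)}$, I would use that any $\Ginf$-stable closed subset is in particular $\Inc(\NN)$-stable (since $\Inc(\NN)$-orbits lie inside $\Sinf$-orbits), so a descending chain of $\Ginf$-stable closed subsets is a descending chain of $\Inc(\NN)$-stable closed subsets and therefore stabilises; this is the standard trick for passing from monoid-Noetherianity to group-Noetherianity when the monoid orbits generate the group orbits. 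The main obstacle I anticipate is purely in the bookkeeping of the equivariance in the previous paragraph: arranging the auxiliary ``factor'' variables and their indexing so that the $\Inc(\NN)$-action on $\widetilde{Y}$ genuinely matches the append-a-zero action on $\Ainf$, while simultaneously ensuring the map $\psi$ is surjective onto all of $\Yinf$ (every flattening-rank-$\leq k$ tensor admits a globally coherent factorisation in the limit). Everything else — continuity, applying Theorem~\ref{thm:RlN}, the disjoint-union and image lemmas — should be routine.
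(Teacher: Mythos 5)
Your overall strategy---work with $\Inc(\NN)$ rather than $\Sinf$, produce an equivariantly Noetherian parametrising space, and finish with Theorem~\ref{thm:RlN} together with Lemmas~\ref{lm:Image} and~\ref{lm:Closed}---uses the right ingredients, but the core construction has a genuine gap: the factorisation space you propose is not covered by Theorem~\ref{thm:RlN}. A flattening of an infinite-dimensional tensor is a matrix whose rows and columns are indexed by \emph{finite-support words} over $\{0,\ldots,n-1\}$ (on complementary sets of positions), not by $\NN$. So the natural parameter variables for a rank-$\leq k$ factorisation are indexed by $[k]\times\{\text{words}\}$, and the resulting polynomial ring has exactly the same index complexity as $\Tinf$ itself; it is not of the form $Q[x_{ij}\mid i\in[l],\,j\in\NN]$ with $\Inc(\NN)$ acting on the single index $j$, and its equivariant Noetherianity is precisely the kind of statement this paper cannot (and does not) prove directly. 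The ``bookkeeping'' you defer---making an enumeration of words compatible with the position-wise $\Inc(\NN)$-action---is not routine: no such enumeration is equivariant, and this mismatch is the heart of the difficulty. There is the further problem that $\Yinf$ requires \emph{all} flattenings to have rank at most $k$, so no single flattening (nor any finite list) gives a parametrised set that is both manageable and close enough to $\Yinf$; containment plus Lemma~\ref{lm:Closed} would suffice in principle, but only if the ambient parametrised set were itself equivariantly Noetherian, which is the unproven point.

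The paper circumvents this by an induction on $k$ that your proposal lacks. Using Proposition~\ref{prop:YFinite}, one writes $\Yinf=\Yinf[k-1]\cup Z_1\cup\ldots\cup Z_N$, where each $Z_a$ is the open locus where some orbit of $k\times k$ minors $\det x[\bw;\bw']$ does not vanish entirely; $\Yinf[k-1]$ is handled by the induction hypothesis. On the open piece $Z'$ where one fixed minor is invertible (Lemma~\ref{lm:Zprime}), the vanishing of the bordered $(k+1)\times(k+1)$ determinants lets one solve \emph{recursively} for every coordinate $x_w$ whose support meets $\NN-[p]$ in at least two places, in terms of the coordinates whose support has at most one position beyond $[p]$. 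That smaller ring \emph{is} of the shape $Q[x_{ij}]$ required by Theorem~\ref{thm:RlN} (with $Q$ the ring in the $n^p$ variables supported in $[p]$ and $l=n^p(n-1)$), and after localising at the minor one gets an $\Inc(\NN)'$-equivariant map whose image contains $Z'$; this is where the left-$\Inc(\NN)$/right-action asymmetry you mention is actually exploited. Finally, since $Z'$ is only $\Sinf'$-stable, the paper needs Lemma~\ref{lm:Quot} to pass to the $\Ginf$-sweep $Z_a=(\Ginf\times Z')/\Sinf'$ before applying Lemma~\ref{lm:Image}. So the localisation at a $k\times k$ minor, the recursion expressing deep coordinates rationally in shallow ones, the induction on $k$ to absorb the locus where all such minors vanish, and the quotient lemma for restoring $\Ginf$-equivariance are all essential steps missing from your outline, and without them the appeal to Theorem~\ref{thm:RlN} does not go through.
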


We will proceed by induction on $k$. For $k=0$ the variety
$\Yinf$ consists of a single point, the zero tensor, and the
theorem trivially holds.  Now assume that the theorem holds for
$k-1$. By Proposition~\ref{prop:YFinite} there exist $k$-tuples
$\bw_1,\ldots,\bw_N,\bw'_1,\ldots,\bw'_N$ of words with finite support,
such that each word in $\bw_a$ and each word in $\bw'_a$ have disjoint
supports and such that $\Yinf[k-1]$ is the common zero set of the
polynomials in $\bigcup_{a=1}^N \Ginf \det(x[\bw_a;\bw'_a])$. For each
$a=1,\ldots,N$ let $Z_a$ denote the open subset of $\Yinf$ where not
all elements of $\Ginf \det(x[\bw_a;\bw'_a])$ vanish; hence we have
\[ \Yinf = \Yinf[k-1] \cup Z_1 \cup \ldots \cup Z_N. \]
We will show that each $Z_a,\ a=1,\ldots,N$ is a $\Ginf$-Noetherian
topological space, with the topology induced from the Zariski topology on
$\Ainf$. Together with the induction hypothesis and Lemmas~\ref{lm:Union}
and \ref{lm:Image}, this then proves that $\Yinf$ is $\Ginf$-Noetherian,
as claimed.

To prove that $Z:=Z_a$ is $\Ginf$-Noetherian, consider
$\bw:=\bw_a=(w_1,\ldots,w_k)$ and $\bw':=\bw'_a=(w'_1,\ldots,w'_k)$. Let
$p$ be the maximum of the union of all supports of the $w_i$ and
the $w'_j$, and let $Z'$ denote the open subset of $\Yinf$ where
$\det(x[\bw;\bw'])$ is non-zero. This subset is stable under the group
$\Sinf'$ of all permutations in $\Sinf$ that restrict to the identity
on $[p]$.

\begin{lm} \label{lm:Zprime}
The open subset $Z' \subseteq \Yinf$ is an $\Sinf'$-Noetherian topological
space.
\end{lm}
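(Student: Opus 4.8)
The plan is to trivialize the determinantal condition on $Z'$ by a local change of coordinates, exhibiting $Z'$ as (a closed subset of) a space of the type covered by Theorem~\ref{thm:RlN}, and then to invoke the permanence Lemmas~\ref{lm:Closed}, \ref{lm:Union}, \ref{lm:Image}, \ref{lm:Quot}. Concretely: on $Z'$ the $k\times k$ matrix $x[\bw;\bw']$ is invertible. A tensor $\omega$ lies in $\Yinf \cap Z'$ iff all $(k+1)\times(k+1)$ flattening-subdeterminants vanish and $\det x[\bw;\bw'] \neq 0$. The key point is that, once we fix the invertible $k\times k$ block indexed by $\bw,\bw'$ inside a suitable flattening, the vanishing of all the larger subdeterminants forces the entire flattening to have rank exactly $k$, and hence expresses \emph{every} remaining coordinate $x_u$ (for $u$ an infinite word not ``already accounted for'') as a rational function — in fact a polynomial divided by $\det x[\bw;\bw']$ — of the entries $x_{w_i + w'_j}$ and of a bounded collection of ``border'' entries $x_{w_i + u'}$, $x_{w + u''}$. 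This is the classical parametrization of rank-$\leq k$ matrices by a generic invertible $k\times k$ minor together with one extra row-block and one extra column-block.

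First I would make the bookkeeping precise. Since all the supports of the $w_i,w'_j$ lie in $[p]$, it is natural to split each infinite word $u$ of finite support into its restriction to $[p]$ and its restriction to $\NN \setminus [p]$; write $u = u^{\le p} + u^{>p}$. For a word $u$ with $u^{\le p} = w_i^{\le p}$ (resp. $u^{\le p}=(w'_j)^{\le p}$) for some $i$ (resp. $j$), the coordinate $x_u$ is a ``free'' coordinate; I would organize these free coordinates, over all ways of attaching a finite-support tail in $\NN\setminus[p]$, as a countable family indexed essentially by $\{1,\dots,k\}$ (the row/column index $i$ or $j$) times a countable set of tails, i.e. exactly the shape $x_{ij}$ with $i \in [\text{const}]$, $j \in \NN$ appearing in Theorem~\ref{thm:RlN}. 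All other coordinates of $\Tinf$ become, on the open set $Z'$, prescribed polynomials in the free ones divided by a power of $\det x[\bw;\bw']$. Thus $Z'$ is isomorphic, as a topological space with $\Sinf'$-action, to a closed $\Sinf'$-stable subset of the spectrum of a localization $Q[x_{ij}]_{(\det)}$ where $Q$ is a Noetherian ring of ``initial-segment'' coordinates (those supported in $[p]$). Then Theorem~\ref{thm:RlN} gives that $Q[x_{ij}]$ is $\Inc(\NN)$-Noetherian, localization preserves this (a localization of an equivariantly Noetherian ring at an invariant element is again equivariantly Noetherian, since $\Pi$-stable ideals correspond), and passing to $K$-points and then to a closed subset preserves $\Pi$-Noetherianity of the topological space by Lemma~\ref{lm:Closed}. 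Finally one must pass from the $\Inc(\NN)$-action to the $\Sinf'$-action: $\Sinf'$ acts on the tails in $\NN\setminus[p]$, which after a shift identifying $\NN\setminus[p]$ with $\NN$ is the full symmetric group $\Sinf$, and $\Inc(\NN) \subseteq \Sinf$-orbits refine $\Sinf$-orbits, so an $\Inc(\NN)$-Noetherian space is a fortiori $\Sinf'$-Noetherian (or one invokes Lemma~\ref{lm:Quot} with $\Pi = \Sinf'$, $\Pi' = \Inc(\NN)$-stabilizer data as needed); in any case $\Sinf'$-stable closed sets are $\Inc(\NN)$-stable.

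I expect the main obstacle to be the explicit verification that the ``dependent'' coordinates really are globally given on $Z'$ by polynomials over $Q$ in the free coordinates divided by a single power of $\det x[\bw;\bw']$, uniformly in $p$ and in the tails — i.e. turning the Cramer's-rule intuition into a clean ring-theoretic statement that the coordinate ring of $\Yinf \cap Z'$ is a localization of a polynomial ring of the form in Theorem~\ref{thm:RlN} compatibly with the symmetry. This is where one must be careful that a coordinate $x_u$ with a \emph{nontrivial} tail both in the ``row direction'' and the ``column direction'' (so $u^{\le p}$ is neither a row-type nor a column-type initial segment) is still determined: it sits at position $(i,j)$ of the rank-$k$ flattening that extends $x[\bw;\bw']$ by the row-block with tail $u^{>p}\!\restriction_{\text{rows}}$ and the column-block with tail $u^{>p}\!\restriction_{\text{cols}}$, so $\det x[\bw;\bw'] \cdot x_u$ is a $(k+1)$-subdeterminant-derived polynomial in free coordinates — this is exactly the relation defining $\Yinf$, so it holds identically on $Z'$. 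Once this is set up, the ring is manifestly of the form $Q[x_{ij}][\det^{-1}]$ and the rest is the cited lemmas; this is precisely the step the paper flags as needing the array-of-numbers picture rather than the coordinate-free one.
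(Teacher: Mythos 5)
Your overall plan---trivialise the determinantal condition over the invertible minor by Cramer's rule, localise at $\det x[\bw;\bw']$, reduce to Theorem~\ref{thm:RlN}, and route through $\Inc(\NN)$ because the parametrisation is not $\Sinf'$-equivariant---is indeed the paper's strategy, but your concrete choice of ``free'' coordinates creates a genuine gap, in two ways. First, the determination claim fails for that choice: a coordinate $x_u$ whose restriction to $[p]$ is neither a row word $w_i$ nor a column word $w'_j$ (e.g.\ equal to $w_i+w'_j$) and which has a nonempty tail is \emph{not} a function of your free coordinates on $Z'$. Structurally, the Cramer relation you would need, extending $x[\bw;\bw']$ by a row and a column whose sum is $u$, forces the new row to be $w_i$ or the new column to be $w'_j$, so the $(k+1)\times(k+1)$ determinant has a repeated row or column and is identically zero. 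Concretely, take $k=1$, $n=2$, $w_1=1$, $w'_1=01$ (so $p=2$ and $\det x[\bw;\bw']=x_{11}$), and for $c \in K$ let $\omega_c \in \Ainf$ be the infinite pure tensor $e_1 \otimes e_1 \otimes (e_0+c\,e_1) \otimes e_0 \otimes e_0 \otimes \cdots$. Each $\omega_c$ lies in $\Yinf[1]$ and satisfies $x_{11}(\omega_c)=1$, hence lies in $Z'$; every coordinate whose restriction to $[2]$ is $00$, $10$ or $01$ vanishes at $\omega_c$ (it contains a factor $x_0(e_1)=0$), so \emph{all} your free coordinates are independent of $c$, while $x_{111}(\omega_c)=c$. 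Hence the claimed closed embedding of $Z'$ into the $K$-points of $Q[x_{ij}]_{(\det)}$ cannot exist. Second, even where determination does hold, your non-$Q$ variables are indexed by $\{1,\dots,2k\}$ times \emph{all} finite-support tails on $\NN\setminus[p]$, not by a finite set times $\NN$; the induced $\Inc(\NN)'$-action on that index set is the action on words, not the action $\pi x_{ij}=x_{i\pi(j)}$ of Theorem~\ref{thm:RlN}, and the corresponding polynomial ring is essentially $\Tinf$ again, whose equivariant Noetherianity is precisely what is not available. So the appeal to Theorem~\ref{thm:RlN} is misapplied as stated.

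The repair is the paper's choice of free variables: all $x_u$ whose support has \emph{at most one} position outside $[p]$. These are the $n^p$ variables supported in $[p]$ (your $Q$) together with variables indexed by $[n]^p \times \{1,\dots,n-1\} \times (\NN\setminus[p])$---note that \emph{all} $n^p$ initial segments are needed as ``rows'', not just the $2k$ words $w_i,w'_j$---and this is exactly the format of Theorem~\ref{thm:RlN} with $l=n^p(n-1)$. The section $\Psi\colon \Tinf \to S=R[\det(x[\bw;\bw'])^{-1}]$ is then defined by recursion on the number of support positions of $u$ outside $[p]$: writing $q$ for the last such position, split $u=w_{k+1}+w'_{k+1}$ with $\supp(w'_{k+1}) \subseteq J \cup \{q\}$ and $\supp(w_{k+1}) \subseteq I\cup\{p+1,\dots,q-1\}$, and use the vanishing on $\Yinf$ of $\det x[w_1,\dots,w_{k+1};w'_1,\dots,w'_{k+1}]$, all of whose other entries have strictly fewer support positions outside $[p]$. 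With this free set your remaining steps (localisation at the invariant determinant, passing to $K$-points, $Z'=\Yinf \cap \im\psi$, Lemmas~\ref{lm:Image} and~\ref{lm:Closed}, and deducing $\Sinf'$-Noetherianity from $\Inc(\NN)'$-Noetherianity because $\Sinf'$-stable closed sets are $\Inc(\NN)'$-stable) do go through. Note also that your assertion of an identification ``as a topological space with $\Sinf'$-action'' is too strong: the section is only $\Inc(\NN)'$-equivariant, not $\Sinf'$-equivariant (the paper gives an explicit example of this failure), but only $\Inc(\NN)'$-equivariance is needed.
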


\begin{proof}
We will prove that it is $\Inc(\NN)'$-Noetherian, where $\Inc(\NN)'$
is the set of all increasing maps $\pi \in \Inc(\NN)$ that restrict
to the identity on $[p]$; consult Section~\ref{sec:Infinite} for the
action of $\Inc(\NN)$. Since the $\Inc(\NN)'$-orbit of an equation is
contained in the corresponding $\Sinf'$-orbit, this implies that $Z'$
is $\Sinf'$-Noetherian.

We start with the polynomial ring $R$ in the variables $x_w$, where
$w$ runs over all infinite words over $\{0,1,\ldots,n-1\}$ with the
property that the support of $w$ contains at most $1$ position that
is larger than $p$. Among these variables there are $n^p$ for which
the support is contained in $[p]$, and the remaining variables are
labelled by $[n]^p \times \{1,\ldots,n-1\} \times (\NN-[p])$. On these
variables acts $\Inc(\NN)'$, fixing the first $n^p$ variables and acting
only on the last (position) index of the last set of variables. By
Theorem~\ref{thm:RlN} with $Q$ the ring in the first $n^p$ variables
and $l=n^p \times (n-1)$, the ring $R$ is $\Inc(\NN)'$-Noetherian.
Let $S=R[\det(x[\bw;\bw'])^{-1}$ be the localisation of $R$ at the
determinant $\det x[\bw,\bw']$; again, $S$ is $\Inc(\NN)'$-Noetherian. We
will construct an $\Inc(\NN)'$-equivariant map $\psi$ from the set of
$K$-valued points of $S$ to $\Ainf$ whose image contains $Z'$.  We do
this, dually, by means of an $\Inc(\NN)'$-equivariant homomorphism $\Psi$
from $\Tinf$ to $S$.

To define $\Psi$ recursively, we first fix a partition $I,J$ of $[p]$ such
that each $w_i$ has support contained in $I$ and each $w'_j$ has support
contained in $J$. Now if $x_w \in \Tinf$ is one of the variables in $R$,
then we set $\Psi x_w:=x_w$. Suppose that we have already defined $\Psi$
on variables $x_w$ such that $\supp(w)-[p]$ has cardinality at most $b$,
and let $w$ be a word for which $\supp(w)-[p]$ has cardinality $b+1$. Let
$q$ be the maximum of the support of $w$, and write $w=w_{k+1}+w'_{k+1}$,
where the support of $w'_{k+1}$ is contained in $J \cup \{q\}$ and the
support of $w_{k+1}$ is contained in $I \cup \{p+1,\ldots,q-1\}$. Consider
the determinant of the matrix
\[ x[w_1,\ldots,w_k,w_{k+1};w'_1,\ldots,w'_{k+1}]. \]
This determinant equals
\[ \det(x[w_1,\ldots,w_k;w'_1,\ldots,w'_k]) \cdot x_w - P, \]
where $P \in \Tinf$ is a polynomial in variables that are of the form
$x_{w_i+w'_j}$ with $i,j \leq k+1$ but not both equal to $k+1$. All of
these $w_i+w'_j$ have supports containing at most $b$ elements outside
the interval $[p]$, so $\Psi(P)$ has already been defined. Then we set
\[ \Psi x_w := \det(x[\bw,\bw'])^{-1} \psi(P). \]
The map $\Psi$ is $\Inc(\NN)'$-equivariant by construction.

The set $Z' \subseteq \Yinf$ is contained in the image of the map
$\psi$. Indeed, this follows directly from the fact that the determinant
of the matrix
\[ x[w_1,\ldots,w_k,w_{k+1};w'_1,\ldots,w'_{k+1}] \]
vanishes on $Z'$ while $\det(x[\bw,\bw'])$ does not. More precisely,
$Z'$ equals the intersection of $\Yinf$ with $\im \psi$, and hence by
Lemmas~\ref{lm:Image} and~\ref{lm:Closed} it is $\Inc(\NN)'$-Noetherian.
We already pointed out that this implies that $Z'$ is $\Sinf'$-Noetherian.
\end{proof}

\begin{re}
While it is possible to describe the map $\Psi$ in the proof of the theorem also by its action on points, it is rather involved and apparently yields no additional insight. A point worth mentioning, though, is the the following fact: if, for $i > p$, we denote the contraction $V^{\otimes [p]\cup\{i\}} \to V^{\otimes p}$ (along $x_0$ in the last factor) by $\partial_i$,  the ring $R$ mentioned in the proof is the coordinate ring of the affine scheme whose $K$-points are the (infinite) sequences
$(v_1,v_2,\dots)$ such that $v_i \in V^{\otimes [p]\cup\{i\}}$ and $\partial_i(v_i) = \partial_j(v_j)$ for all $i,j$, and $S$ is the coordinate ring of the open subset of sequences where a certain $k \times k$-subdeterminant is nonzero.
So $\Psi$ may be thought of as a ``glueing together'' of such a sequence to form an element in $\Ainf$.
\end{re}

\begin{ex}
The following shows that the map $\Psi$ in the proof above is not
$\Sinf'$-equivariant, which justifies the detour via $\Inc(\NN)'$. Take
$\bw,\bw'$ as in Example~\ref{ex:Det}, and take $w=0012$. The matrix
in the proof above equals
\[
\begin{bmatrix}
x_{11} & x_{12} & x_{1002}\\
x_{21} & x_{22} & x_{2002}\\
x_{011} & x_{021} & x_{0012}
\end{bmatrix}\]
and gives rise to
\[ \Psi x_{0012}=(x_{11}x_{22}-x_{12}x_{21})^{-1}
	\cdot (x_{2002}(x_{11}x_{021}-x_{12}x_{011})-
		x_{1002}(x_{21}x_{021}-x_{22}x_{011})). \]
On the other hand, take $w=0021$, obtained from $w$ by permuting
positions $3$ and $4$. The matrix in the proof above equals
\[
\begin{bmatrix}
x_{11} & x_{12} & x_{1001}\\
x_{21} & x_{22} & x_{2001}\\
x_{012} & x_{022} & x_{0021}
\end{bmatrix}
\]
and gives rise to
\[ \Psi x_{0021}=(x_{11}x_{22}-x_{12}x_{21})^{-1}
	\cdot (x_{2001}(x_{11}x_{022}-x_{12}x_{012})-
		x_{100}(x_{21}x_{022}-x_{22}x_{012})), \]
which is {\em not} obtained from the expression above by permuting
positions $3$ and $4$.
\end{ex}

Now that $Z'$ is $\Sinf'$-Noetherian, Lemma~\ref{lm:Quot} implies that
the $\Ginf$-space $(\Ginf \times Z')/\Sinf$ is $\Ginf$-Noetherian. The
map from this space to $\Ainf$ sending $(g,z')$ to $gz'$ is
$\Ginf$-equivariant and continuous, and its image is the open set $Z
\subseteq \Yinf$. Lemma~\ref{lm:Image} now implies that $Z$ is
$\Sinf$-Noetherian. We conclude that, in addition to the closed subset
$\Yinf[k-1] \subseteq \Yinf$, also the open subsets $Z_1,\ldots,Z_N$
are $\Sinf$-Noetherian. As mentioned before, this implies that
$\Yinf=\Yinf[k-1] \cup Z_1 \cup \ldots \cup Z_N$ is
$\Sinf$-Noetherian, as claimed in Theorem~\ref{thm:YEqNoether}.

\section{Proofs of main theorems} \label{sec:Proofs}

We are now in a position to prove our Main Theorems I, II, and III. We
start with the latter.

\begin{proof}[Proof of Main Theorem III.]
As $\Xinf$ is a closed $\Ginf$-stable subset of $\Yinf$, and as $\Yinf$
is a $\Ginf$-Noetherian topological space (Theorem~\ref{thm:YEqNoether}),
$\Xinf$ is cut out from $\Yinf$ by finitely many $\Ginf$-orbits of
equations. Moreover, $\Yinf$ itself is cut out from $\Ainf$ by finitely
many $\Ginf$-orbits of Equations (Proposition~\ref{prop:YFinite}),
and hence the same is true for $\Xinf$.
\end{proof}

In the proofs of Main Theorems I and II we will use inclusion maps
\[ \tau_p: V^{\otimes p} \to V^{\otimes p+1},\ \omega \mapsto \omega
\otimes e_0, \]
where $e_0$ is an element of $V$ such that $x_0(e_0)=1$.  This map sends
$\Xp{p}$ into $\Xp{p+1}$ and satisfies $\pi_p \circ
\tau_p=\id_{V^{\otimes p}}$.

\begin{proof}[Proof of Main Theorem I.]
By Lemma~\ref{lm:AllSame} it suffices to prove that for fixed $k \in \NN$
there exists a $d \in \NN$ such that for all $p \in \NN$ the variety
$\Xp{p}$ of tensors of border rank at most $k$ is defined in $V^{\otimes
p}$ by polynomials of degree at most $d$. By Main Theorem III there
exists a $d$ such that $\Xinf$ is defined in $\Ainf$ by polynomials
of degree at most $d$; we prove that the same $d$ suffices in Main
Theorem I. Indeed, suppose that all polynomials of degree at most $d$
in the ideal of $\Xp{p}$ vanish on a tensor $\omega \in V^{\otimes
p}$. Let $\omega_\infty$ be the element of $\Ainf$ obtained from
$\omega$ by successively applying $\tau_p,\tau_{p+1},\ldots$. More
precisely, for any coordinate $x_w \in T_q,\ q \geq p$ we have
$x_w(\omega_\infty)=x_w(\tau_{q-1}\tau_{q-2}\cdots\tau_p \omega)$, and
this determines $\omega_\infty$. We claim that $\omega_\infty$ lies in
$\Xinf$. Indeed, otherwise some $T_q$ contains a polynomial $f$ of degree
at most $d$ that vanishes on $\Xp{q}$ but not on $\omega_\infty$. Now
$q$ cannot be smaller than $p$, because then $f$ vanishes on $\Xp{p}$
but not on $\omega$. But if $q \geq p$, then $f \circ \tau_{q-1} \circ
\cdots \circ \tau_p$ is a polynomial in $T_p$ of degree at most $d$
that vanishes on $\Xp{p}$ but not on $\omega$. This contradicts the
assumption on $\omega$.
\end{proof}

The proof of Main Theorem II is slightly more involved.

\begin{proof}[Proof of Main Theorem II]
By Lemma~\ref{lm:AllSame} it suffices to show that for fixed $k \in \NN$
there exists a $p_0$ such that a tensor in $V^{\otimes p},\ p \geq p_0$
is of border rank at most $k$ as soon as all its contractions along pure
$(p-p_0)$-tensors to $V^{\otimes p_0}$ have border rank at most $k$.
By Main Theorem III there exists a $p_0$ such that the $\Ginf$-orbits
of the equations of $\Xp{p_0}$ define $\Xinf$. We claim that this value
of $p_0$ suffices for Main Theorem II, as well. Indeed, suppose that
$\omega \in V^{\otimes p}$ has the property that all its contractions
along pure tensors to $V^{\otimes p_0}$ lie in $\Xp{p_0}$, and construct
$\omega_\infty \in \Ainf$ as in the proof of Main Theorem I. We claim
that $\omega_\infty$ lies in $\Xinf$. For this we have to show that for
each $f$ in the ideal of $\Xp{p_0}$ and each $g \in \Ginf$ the polynomial
$gf$ vanishes on $\omega_\infty$. Let $q \in \NN$ be such that $g \in
G_q$. By construction, $f \in T_{p_0}$ is identified with the function
in $T_q$ obtained by precomposing $f$ with the contraction
$V^{\otimes q} \to V^{\otimes p_0}$ along the pure tensor $x_0^{\otimes
q-p_0}$ on the last $q-p_0$ factors. Hence $gf$ is the
same as contraction $V^{\otimes q} \to V^{\otimes p_0}$ along {\em some}
pure tensor (in some of the factors), followed by $g'f$ for some $g'
\in G_{p_0}$. Evaluating $gf$ at the tensor $\omega_\infty$ is the same
as evaluating it at
\[ \omega \otimes (e_0)^{\otimes q-p}, \]
and boils down to contracting some, say $l$, of the factors $e_0$ with
a pure tensor in $(V^*)^{\otimes l}$, and $q-p_0-l$ of the remaining
factors $V$ with a pure tensor $\mu$ in $(V^*)^{\otimes q-p_0-l}$, and
evaluating $g'f$ at the result. But this is the same thing as contracting
$\omega$ with $\mu$ in $(V^*)^{\otimes q-p_0-l}$ to obtain a $\omega'
\in V^{\otimes p-q+p_0+l}$ and evaluating $g'f$ at $\omega' \otimes
e_0^{q-p-l}$. Now by assumption $\omega'$ lies in $\Xp{p-q+p_0+l}$
(since $p-q+p_0+l \leq p_0$), and hence $\omega' \otimes e_0^{q-p-l}$
lies in $\Xp{p_0}$. This proves that $g'f$ vanishes on it, so that $gf$
vanishes on $\omega_\infty$, as claimed. Hence $\omega_\infty$ lies in
$\Xinf$. But the projection $\Ainf \to V^{\otimes p}$ sends
$\omega_\infty$ to $\omega$ and $\Xinf$ to $\Xp{p}$. Hence $\omega
\in \Xp{p}$, as required.
\end{proof}

\section{All equations?} \label{sec:Ideal}

In contrast with Main Theorem III, we do not believe that the full ideal
of $\Xinf$ is generated by finitely many $\Ginf$-orbits; and in fact this
fails already if $k = 1$ where $\Xinf[1] = \Yinf[1]$.  Since it takes
some space, the proof of this negative result is deferred to the appendix.
To obtain finiteness results for the full ideal of $\Xinf$ we propose to
use further non-invertible symmetries, provided by the following monoid.

\begin{de}
The {\em substitution monoid} $\Subs(\NN)$ consists of infinite sequences
$\sigma=(\sigma(1),\sigma(2),\ldots)$ of finite, pairwise disjoint subsets
of $\NN$. The multiplication in this monoid is given by
\[ (\sigma \pi)(p)=\bigcup_{q \in \pi_p}\sigma(q). \]
\end{de} 

A straightforward computation shows that $\Subs(\NN)$ is indeed an
associative monoid with unit element $1=(\{1\},\{2\},\{3\},\ldots)$.
It has a natural left action on infinite words over
$\{0,1,\ldots,n-1\}$ with finite support: $\sigma w:=w'$ where $w'(p)=0$
if $p$ does not lie in any of the $\sigma(q)$, and $w'(p)=w(q)$ if $p$
lies in $\sigma(q)$. We let $\Subs(\NN)$
act on $\Tinf$ by algebra homomorphisms determined by $\sigma x_w :=
x_{\sigma w}$.  This gives rise to a {\em right} action of $\Subs(\NN)$
on $\Ainf$.  The following lemma explains our interest in this monoid
of non-invertible symmetries.

\begin{lem}
 The ideal of $\Xinf$ is stable under the $\Subs(\NN)$-action.
\end{lem}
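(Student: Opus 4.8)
The plan is to deduce the lemma from the dual geometric statement that the (already mentioned) right action of $\Subs(\NN)$ on $\Ainf$ stabilises $\Xinf$. Write $v\cdot\sigma$ for the right action of $\sigma\in\Subs(\NN)$ on $v\in\Ainf$ and $\Psi_\sigma\colon\Tinf\to\Tinf$ for the algebra endomorphism $x_w\mapsto x_{\sigma w}$. By the very construction of the dual action, $(\sigma f)(v)=(\Psi_\sigma f)(v)=f(v\cdot\sigma)$ for all $f\in\Tinf$, $v\in\Ainf$, $\sigma\in\Subs(\NN)$. Hence, once we know that $\Xinf\cdot\sigma\subseteq\Xinf$ for every $\sigma$, every $f$ vanishing on $\Xinf$ has $\sigma f$ vanishing on $\Xinf$ as well, which is exactly the assertion of the lemma.

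To establish $\Xinf\cdot\sigma\subseteq\Xinf$ I would argue level by level. Since $\Xinf=\lim_{\ot p}\Xp{p}$, it suffices to show that $(v\cdot\sigma)_p\in\Xp{p}$ for all $p$ and all $v\in\Xinf$. Fix $p$, choose $N$ with $\bigcup_{q=1}^{p}\sigma(q)\subseteq[N]$ (a finite union of finite sets), and note that for a word $w$ with $\supp(w)\subseteq[p]$ the coordinate $x_w$ of $(v\cdot\sigma)_p$ equals $x_{\sigma w}(v)=x_{\sigma w}(v_N)$, because $\sigma w$ is supported in $[N]$. As $w$ ranges over a basis of $(V^*)^{\otimes p}$, this identifies $(v\cdot\sigma)_p=\Phi(v_N)$, where $\Phi=\Phi_{\sigma,p}\colon V^{\otimes N}\to V^{\otimes p}$ is the linear map determined by $x_w(\Phi(u))=x_{\sigma w}(u)$ for all words $w$ with $\supp(w)\subseteq[p]$.

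The crux is to describe $\Phi$ on pure tensors. A direct computation gives
\[
\Phi(u_1\otimes\cdots\otimes u_N)=\Bigl({\textstyle\prod_{j\in[N]\setminus\bigcup_{q\le p}\sigma(q)}x_0(u_j)}\Bigr)\cdot z_1\otimes\cdots\otimes z_p,
\]
where $z_q\in V$ has $i$-th coordinate $\prod_{j\in\sigma(q)}x_i(u_j)$ (an empty product being $1$, so $z_q=e_0+\cdots+e_{n-1}$ when $\sigma(q)=\emptyset$): on a pure tensor $\Phi$ performs a coordinatewise (``Hadamard'') multiplication of the vectors grouped together by $\sigma$, contracts the unused factors against $x_0$, and inserts all-ones vectors in empty slots. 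Thus $\Phi$ sends every pure tensor to a scalar multiple of a pure tensor, hence every rank-at-most-$k$ tensor to a rank-at-most-$k$ tensor. Since $\Phi$ is a polynomial (indeed linear) map and $\Xp{p}$ is Zariski-closed, $\Phi^{-1}(\Xp{p})$ is a closed subset of $V^{\otimes N}$ containing every rank-at-most-$k$ tensor, hence containing the Zariski closure $\Xp{N}$ of that set; therefore $\Phi(\Xp{N})\subseteq\Xp{p}$. As $v\in\Xinf$ forces $v_N\in\Xp{N}$, we conclude $(v\cdot\sigma)_p=\Phi(v_N)\in\Xp{p}$, which is what we wanted.

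I do not expect a genuine obstacle; the content is the observation that $\Subs(\NN)$ acts at each finite level through the rank-non-increasing linear maps $\Phi_{\sigma,p}$ above. The two points needing care are the bookkeeping identifying $(v\cdot\sigma)_p$ with $\Phi(v_N)$, and the verification of the displayed formula---i.e.\ recognising that ``duplicating a tensor slot'' amounts on pure tensors to Hadamard multiplication and so creates no new rank. For the opening paragraph one should also record the elementary principle that a left action on a coordinate ring by algebra endomorphisms fixes an ideal as soon as the dual right action fixes its zero set, applied here to the ideal of $\Xinf$.
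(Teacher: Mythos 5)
Your proposal is correct and complete: the identification $(v\cdot\sigma)_p=\Phi_{\sigma,p}(v_N)$, the formula for $\Phi_{\sigma,p}$ on pure tensors, and the closed-preimage argument upgrading ``rank non-increasing'' to ``border-rank non-increasing'' at each finite level all check out, and the dualisation from stability of the set $\Xinf$ to stability of its ideal is the standard one. It is, however, a genuinely different route from the paper's. The paper proves the geometric statement by reducing to the case $k=1$: it verifies the covariance $\sigma \det x[\bw;\bw']=\det x[\sigma\bw;\sigma\bw']$, which (together with $\Xinf[1]=\Yinf[1]$, i.e.\ the fact that the $2\times 2$ flattening determinants cut out the pure infinite tensors) shows that $\Subs(\NN)$ stabilises $\Xinf[1]$; since the action on $\Ainf$ is linear, it then stabilises the set of sums of $k$ elements of $\Xinf[1]$, and hence its Zariski closure $\Xinf$. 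Your argument works level by level instead, and needs neither the equations of $\Xinf[1]$ nor the (true, but tacitly used) fact that $\Xinf$ is the closure of the set of sums of $k$ elements of $\Xinf[1]$: you exhibit the finite-level linear maps $\Phi_{\sigma,p}$ induced by $\sigma$ and observe that on pure tensors they Hadamard-multiply the slots grouped by $\sigma$, contract the unused slots against $x_0$, and insert all-ones vectors in empty slots, hence carry rank-$\le k$ tensors to rank-$\le k$ tensors. The paper's proof is shorter and leans on the Segre ($k=1$) equations it discusses anyway; yours is more self-contained and yields a concrete description of how $\Subs(\NN)$ acts on finite tensors, which makes the preservation of (border) rank transparent and could be reused elsewhere.
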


\begin{proof}
First, for pure tensors, a straightforward calculation shows that for
$\bw=(w_1,w_2)$ and $\bw'=(w_1',w_2')$ pairs of words as before, we have
$\sigma \det x[\bw,\bw']=\det x[\sigma \bw,\sigma \bw']$ where $\sigma$
acts component-wise on $\bw$ and $\bw'$. Hence $\Subs(\NN)$ stabilises
$\Xinf[1]$. Since $\Subs(\NN)$ acts linearly on $\Ainf$, this means
that $\Subs(\NN)$ also stabilises the set of sums of $k$ elements of
$\Xinf[1]$. But then it also stabilises the Zariski-closure $\Xinf$
of this set.
\end{proof}

We have the following conjecture. 

\begin{conj} \label{conj:Subs}
For every fixed $k$, the ideal of $\Xinf$ is generated by finitely many
$\Subs(\NN)$-orbits of equations.
\end{conj}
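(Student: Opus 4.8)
The plan is to upgrade the set-theoretic argument of Sections~\ref{sec:Yk}--\ref{sec:Proofs} to the level of ideals, with the substitution monoid $\Subs(\NN)$ taking over the role that $\Ginf$ played there. The lemma just proved tells us that $I(\Xinf)\subseteq\Tinf$ is already a $\Subs(\NN)$-stable ideal, so the conjecture would follow at once from the purely ring-theoretic statement that $\Tinf$ is $\Subs(\NN)$-Noetherian, i.e.\ that every $\Subs(\NN)$-stable ideal of it is generated by finitely many $\Subs(\NN)$-orbits. It would in fact suffice to prove this for any $\Subs(\NN)$-stable quotient of $\Tinf$ through which $I(\Xinf)$ descends, for instance the quotient by the ideal generated by all flattening determinants $\det x[\bw;\bw']$; but $\Tinf$ itself is the cleanest target. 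This is exactly the point where one \emph{must} leave $\Ginf$ behind, since the appendix shows that already $I(\Xinf[1])=I(\Yinf[1])$ is not generated by finitely many $\Ginf$-orbits.

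So the first thing I would attempt is a direct proof that $\Tinf$ is $\Subs(\NN)$-Noetherian. The variables $x_w$ are indexed by the finitely supported colourings $w\colon\NN\to\{0,1,\ldots,n-1\}$, and $\Subs(\NN)$ acts by pulling such a colouring back along the partial map a substitution encodes, an operation that may delete positions, spread a single position out into a cluster, and interleave the clusters arbitrarily. This puts $\Tinf$ among the polynomial rings over combinatorial index sets that are amenable to equivariant Gr\"obner-basis methods, of which Theorem~\ref{thm:RlN} is a prototype. Concretely, I would fix a $\Subs(\NN)$-compatible monomial order on $\Tinf$, reduce by the standard Gr\"obner argument to the case of $\Subs(\NN)$-stable \emph{monomial} ideals, and try to establish finite generation up to $\Subs(\NN)$ there by a Dickson-type well-partial-order argument on the monomials modulo the action. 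As in the proof of Lemma~\ref{lm:Zprime}, it is likely helpful to pass first to the submonoid $\Subs(\NN)'$ fixing an initial segment $[p]$ pointwise, so that Theorem~\ref{thm:RlN} can be used as a base case while the new positions are handled inductively.

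If $\Tinf$ turns out not to be $\Subs(\NN)$-Noetherian, one would fall back on imitating the induction on $k$ of Section~\ref{sec:EqNoeth}: first verify the base case $k=1$ by checking that the $2\times2$ flattening determinants, which generate $I(\Xinf[1])$, form finitely many $\Subs(\NN)$-orbits (plausible, since $\sigma\det x[\bw;\bw']=\det x[\sigma\bw;\sigma\bw']$ and spreading already produces every support pattern), and then try to pass from the ideal of the variety of pure tensors to that of its $k$-th secant using the $\Subs(\NN)$-equivariant addition map $\Xinf[1]^{\times k}\to\Ainf$ (equivariant because $\Subs(\NN)$ acts linearly on $\Ainf$) together with a $\Subs(\NN)'$-equivariant analogue of the ``gluing'' localisation $\Psi$ of Lemma~\ref{lm:Zprime}. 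The catch is that reading off the ideal of the image of a morphism from the ideal of its source is precisely the classical hard problem about equations of secant varieties, so this route, too, eventually needs a ring-level Noetherianity input.

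Accordingly, I expect the $\Subs(\NN)$-Noetherianity of $\Tinf$---a statement with no tensors left in it---to be the crux: a proof settles Conjecture~\ref{conj:Subs} outright, whereas a disproof would mean that the fine structure of border rank must be exploited far more delicately than anything done in this paper.
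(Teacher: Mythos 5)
The statement you are addressing is a \emph{conjecture}: the paper does not prove it, and neither does your proposal. What you have written is a research plan whose entire weight rests on an unproven assertion, namely that $\Tinf$ is $\Subs(\NN)$-Noetherian as a ring. You acknowledge this yourself in your last paragraph, so there is no complete argument to assess: the reduction ``stable ideal $+$ ring-level equivariant Noetherianity $\Rightarrow$ finitely many orbits of generators'' is correct but trivial, and everything hard has been pushed into the unproved Noetherianity claim. Note that the paper's own partial results point away from that claim being easy (or even true): the Proposition in Section~\ref{sec:Ideal} only establishes that the polynomials of degree at most $d$ form a Noetherian $\Subs(\NN)$-\emph{module}, one fixed degree range at a time, and the Corollary deduces that Conjecture~\ref{conj:Subs} is \emph{equivalent} to generation of the ideal in bounded degree. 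If ring-level $\Subs(\NN)$-Noetherianity of $\Tinf$ were available, that equivalence would be pointless; its presence signals that the bounded-degree question is exactly the open content, not a technicality.

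Concretely, the step in your plan that would fail is the ``Dickson-type well-partial-order argument on monomials modulo the action'' carried out for \emph{all} degrees simultaneously. The wqo argument in the paper (via Higman's lemma) works because the degree $d$ is fixed, so monomials are encoded by tables with a bounded number of rows --- this is the analogue of Theorem~\ref{thm:RlN}, where the number $l$ of rows is finite. Once the degree is unbounded, you are in the analogue of the two-sided infinite-index situation (variables $x_{ij}$ with $i,j\in\NN$ and a single symmetric-group action), where equivariant ring Noetherianity is known to fail by cycle-type counterexamples; nothing in your sketch explains why the $\Subs(\NN)$-action on words should escape this obstruction. Your fallback route (induction on $k$ with a $\Subs(\NN)$-equivariant analogue of the gluing map $\Psi$) is, as you note, blocked by the need to produce ideal-theoretic equations of secant varieties from those of $\Xinf[1]$, which is precisely the problem being conjectured about. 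So the proposal identifies the right stability lemma and the right monoid, both of which the paper already supplies, but it does not advance the conjecture beyond what Section~\ref{sec:Ideal} already contains.
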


This conjecture certainly holds for $k=1$ and $k=2$: then the ideal
is generated by subdeterminants of flattenings, which by the following
lemma form a single $\Subs(\NN)$-orbit.

\begin{lem}\label{lem:YINF_SUBS}
For fixed $k$, the $k \times k$-sub-determinants $\det x[\bw,\bw']$
in $\Tinf$ form a single $\Subs(\NN)$-orbit.
\end{lem}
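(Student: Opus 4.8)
The plan is to exhibit a single "model" determinant and show that every other $\det x[\bw,\bw']$ (with $\bw,\bw'$ lists of $k$ pairwise-distinct words of finite support, the $w_i$ supported away from the $w'_j$) is obtained from it by applying a suitable $\sigma \in \Subs(\NN)$. The key point is that $\Subs(\NN)$ acts on coordinates by $\sigma x_w = x_{\sigma w}$, where $\sigma w$ simply re-labels and "spreads out" the positions of $w$ according to the disjoint sets $\sigma(q)$; since the $\sigma(q)$ may be empty, nonempty singletons, or larger sets, this action is flexible enough to turn any finite word into any other finite word of the same or larger "content", in a controlled way. Crucially, the action is by algebra homomorphisms, so it commutes with taking determinants: $\sigma \det x[\bw,\bw'] = \det x[\sigma\bw,\sigma\bw']$ where $\sigma$ acts entrywise, exactly as recorded in the proof of the previous lemma for $k=2$ (and the same computation works for all $k$).

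First I would fix as the model the determinant $\det x[\bw_0,\bw'_0]$ where $\bw_0$ and $\bw'_0$ are chosen as simply as possible — for instance $w_i$ the word with a single $1$ in position $i$ (for $i=1,\dots,k$) and $w'_j$ the word with a single $1$ in position $k+j$ — so that the $(i,j)$-entry is $x_{e_i + e_{k+j}}$, a distinct variable for each pair. Next, given an arbitrary pair of lists $(\bw,\bw')$, I would construct $\sigma \in \Subs(\NN)$ with $\sigma w_i = w_i$ (the target $i$-th row word) and $\sigma w'_j = w'_j$ simultaneously. Here one uses the fact that the $2k$ words $w_1,\dots,w_k,w'_1,\dots,w'_k$ have pairwise disjoint supports among the two groups and are distinct within each group: we only need $\sigma$ to map the single nonzero position of each model word to the (finite) support of the corresponding target word, and to send every other position to the empty set. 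Concretely, assign $\sigma(i) := \supp(w_i)$ for $i \le k$, $\sigma(k+j) := \supp(w'_j)$ for $j \le k$, and $\sigma(q) := \emptyset$ for $q > 2k$; then $\sigma w_i$ has its $1$'s exactly on $\supp(w_i)$ — but this only reproduces $w_i$ if $w_i$ is a $0/1$ word. To handle words with entries $>1$ (when $n > 2$), one replaces the model so that $w_i$ already carries the multiset of nonzero values appearing in the target, positioned one per coordinate; since only finitely many words and finitely many values are involved for a fixed instance, a single model word of length $\le$ (sum of lengths) with the right value-multiset in distinct positions suffices, and then $\sigma$ just distributes those positions onto $\supp(w_i)$ matching values. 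Finally I would check $\sigma\bw = \bw$, $\sigma\bw' = \bw'$ entrywise, hence $\sigma \det x[\bw_0,\bw'_0] = \det x[\bw,\bw']$, which proves transitivity.

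The main obstacle, and the only genuinely delicate point, is making the single model word simultaneously rich enough to reach every target word under some $\sigma$ while keeping the $2k$ rows/columns distinct and the supports of $\bw$ disjoint from those of $\bw'$ — and verifying that the constructed $\sigma$ is a legitimate element of $\Subs(\NN)$, i.e.\ that the chosen $\sigma(q)$ really are pairwise disjoint. Disjointness of the images is exactly where the hypothesis on $\bw,\bw'$ (pairwise distinct words, and $\supp w_i \cap \supp w'_j = \emptyset$) is used: the supports $\supp(w_i)$ and $\supp(w'_j)$ are pairwise disjoint, so setting $\sigma(q)$ equal to these supports for the relevant indices $q$ and to $\emptyset$ elsewhere is consistent. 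Once that bookkeeping is in place the rest is the routine homomorphism computation $\sigma\det = \det\sigma$ already used above, so I expect the lemma to follow with a short, essentially combinatorial argument.
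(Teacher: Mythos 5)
Your construction of $\sigma$ is not, in general, an element of $\Subs(\NN)$, and this is a genuine gap rather than bookkeeping. The hypothesis on $\bw,\bw'$ only says that the support of each $w_i$ is disjoint from the support of each $w'_j$; it does \emph{not} say that the supports of $w_1,\dots,w_k$ are pairwise disjoint among themselves (nor those of the $w'_j$). The words within one tuple are merely required to be pairwise distinct, and typically they overlap heavily: in the paper's own Example~\ref{ex:Det} one has $w_1=1$, $w_2=2$, both supported on $\{1\}$, and $w'_1=01$, $w'_2=02$, both supported on $\{2\}$. So setting $\sigma(i):=\supp(w_i)$ and $\sigma(k+j):=\supp(w'_j)$ produces sets that are not pairwise disjoint, and no legitimate $\sigma$ arises. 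The deeper point is that a single position $p$ may carry \emph{different} values in different rows (e.g.\ $w_1(1)=1$, $w_2(1)=2$), so no assignment of positions to individual model words can work: the model must encode, for each position, the whole vector of $2k$ values at that position. Your patch for values $>1$ also makes the model depend on the target instance (``for a fixed instance''), which at best shows each determinant lies in the orbit of \emph{some} determinant, not that all of them lie in the orbit of one fixed determinant, which is what ``a single $\Subs(\NN)$-orbit'' requires.

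The paper's proof repairs exactly this by working column-wise in the $2k\times\NN$ table $T_{\bw,\bw'}$ whose rows are $w_1,\dots,w_k,w'_1,\dots,w'_k$: by the cross-disjointness hypothesis every nonzero column starts or ends with $k$ zeroes, and the universal model $(\bu,\bu')$ is chosen so that its table contains each of the $2(n^k-1)$ admissible nonzero column patterns exactly once. Then $\sigma(j)$ is defined as the set of positions of the target table whose column equals the $j$-th column of the model table (and $\emptyset$ where the model column is zero); these sets are automatically pairwise disjoint because each target position has a single column pattern, and one reads off $\sigma u_i=w_i$, $\sigma u'_j=w'_j$, hence $\sigma\det x[\bu,\bu']=\det x[\bw,\bw']$ by the homomorphism property you already noted. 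Your overall strategy (one model plus a tailored $\sigma$, using $\sigma\det=\det\sigma$) is the right shape, but without the column-pattern idea the argument does not go through.
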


\begin{proof}
Let us write the elements of $\bw = (w_1,w_2,\dots,w_k)$ and $\bw' =
(w_1',w_2',\dots,w_k')$ in a $2k \times \NN$-table $T_{\bw,\bw'}$ with
the entry at $(i,j)$ equal to $w_i(j)$ if $i \leq k$ and equal to $w'_{i-k}(j)$
if $k < i \leq 2k$. Note that we require that each $w_i$ has support
disjoint from that of each $w_j'$, so that each column either starts
with $k$ zeroes or ends with $k$ zeroes. Fix $\bu=(u_1,\ldots,u_k)$
and $\bu'=(u'_1,\ldots,u'_k)$ such that in the corresponding table
$T_{\bu,\bu'}$ every non-zero column starting or ending with $k$ zeroes
occurs exactly once (hence the number of non-zero columns is
$2(n^k-1)$). For general $\bw,\bw'$ as above, define $\sigma
\in \Subs(\NN)$ as follows: $\sigma(j):=\emptyset$ if the $j$-th column
of $T_{\bu,\bu'}$ is zero, and
\[ \sigma(j):=\{l \in \NN \mid \text{ the $l$-th column of $T_{\bw,\bw'}$
equals the $j$-th column of $T_{\bu,\bu'}$}\} \]
otherwise. By construction we find $\sigma u_i=w_i$ and $\sigma
u_i'=w_i'$ for all $i$, and therefore $\sigma \det x[\bu,\bu']=\det
x[\bw,\bw']$. 
\end{proof}

We point out two interesting submonoids of $\Subs(\NN)$. First,
the submonoid of $\Subs(\NN)$ consisting of $\sigma$ for which each
$\sigma(i)$ is non-empty. This submonoid acts by means of injective
endomorphisms on $\Tinf$, and satisfies a variant of the previous lemma
with ``a single orbit'' replaced by ``finitely many orbits''. The same
is true for an even smaller submonoid, denoted $\Subs_{<}(\NN)$ where we
require $\sigma(i) \neq \emptyset$ for all $i$ {\em and} $\max \sigma(1)
< \max \sigma(2) < \ldots$---again at the cost of increasing the number
of orbits.  Using this submonoid we prove the following finiteness result.

\begin{prop}
For every natural number $d$, the polynomials in $\Tinf$ of
degree at most $d$ form a Noetherian $\Subs(\NN)$-module, that is,
every $\Subs(\NN)$-submodule is generated by a finite number of
$\Subs(\NN)$-orbits.
\end{prop}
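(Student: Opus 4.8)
The plan is to use the submonoid $\Subs_{<}(\NN)$ and to reduce the statement to Theorem~\ref{thm:RlN} (the Cohen--Hillar--Sullivant Noetherianity result), in the same spirit as the proof of Lemma~\ref{lm:Zprime}. First I would fix $d$ and observe that a polynomial of degree at most $d$ in $\Tinf$ is a $K$-linear combination of monomials $x_{w_1} x_{w_2} \cdots x_{w_e}$ with $e \leq d$ and each $w_i$ an infinite word of finite support. Packaging the exponents of such a monomial as an unordered multiset of at most $d$ words, one sees that the degree-$\leq d$ part $\Tinf^{\leq d}$ is spanned by these multiset-indexed monomials. The key combinatorial point is that each such monomial is determined by a $d \times \NN$ array over the alphabet $\{0,1,\ldots,n-1\}$ (rows being the words $w_1,\ldots,w_e$, padded with zero rows if $e < d$, up to reordering of the rows), and all but finitely many columns of this array are zero. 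A column is thus an element of $\{0,1,\ldots,n-1\}^d$, and a nonzero such column is one of finitely many, say $c_1,\ldots,c_M$, "column types".

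Next I would encode the monomial by, for each column type $c_s$, the finite set of positions $j \in \NN$ at which that column type occurs; this is exactly the data of an element of $\Subs_{<}(\NN)$ (or rather finitely many such, one per "profile" of which column types are present and with what multiplicities), applied to a fixed "generic" monomial of the corresponding shape. More precisely, I would introduce an auxiliary polynomial ring $R = Q[y_{s,j} \mid s=1,\ldots,M;\ j=1,2,\ldots]$ in which the action of $\Inc(\NN)$ (via $\pi y_{s,j} = y_{s,\pi(j)}$) matches the $\Subs_{<}(\NN)$-action on monomials, and set up a $\Subs_{<}(\NN)$-equivariant (or $\Inc(\NN)$-equivariant) surjection — really just a dictionary between basis monomials — identifying $\Tinf^{\leq d}$, as a $\Subs_{<}(\NN)$-module, with a suitable submodule of (a finite sum of copies of) $R$. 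Here one must be careful: because the $w_i$ are required to have finite support but there is no bound on that support, one genuinely needs the infinitely many position-variables; and because of possible repeated column types one needs the $y$-variables indexed by finitely many column types rather than by single words, which is why $l = M$ (the number of nonzero column types) appears rather than $l = 1$.

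With this translation in hand, the proposition follows: by Theorem~\ref{thm:RlN} the ring $R$ is $\Inc(\NN)$-Noetherian, hence Noetherian as an $\Inc(\NN)$-module over $Q$ in the relevant graded piece, so any $\Inc(\NN)$-stable submodule of the finite-rank free module corresponding to $\Tinf^{\leq d}$ is finitely generated up to $\Inc(\NN)$-orbits; transporting back along the dictionary, any $\Subs_{<}(\NN)$-submodule of $\Tinf^{\leq d}$ is generated by finitely many $\Subs_{<}(\NN)$-orbits, and a fortiori any $\Subs(\NN)$-submodule is generated by finitely many $\Subs(\NN)$-orbits (the orbit of an element under the larger monoid contains its orbit under the submonoid). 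I expect the main obstacle to be bookkeeping the passage from the monoid $\Subs_{<}(\NN)$ acting on multiset-monomials to the clean $\Inc(\NN)$-action on the $y_{s,j}$: one has to check that $\Subs_{<}(\NN)$ really does act by the $\Inc(\NN)$-rule on the position-index once the column-type profile is fixed, that distinct profiles give a finite direct sum, and that "finitely many orbits" is preserved in both directions of the dictionary. The underlying Noetherianity is entirely imported from Theorem~\ref{thm:RlN}; the work is in the reduction.
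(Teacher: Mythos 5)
There is a genuine gap, and it sits exactly at the point you flag as ``the main obstacle'': the $\Subs_<(\NN)$-action on monomials does \emph{not} match the $\Inc(\NN)$-rule on the position index. If $\sigma \in \Subs_<(\NN)$ has $|\sigma(q)|\geq 2$, then $\sigma$ copies the column sitting at position $q$ to every position of $\sigma(q)$; under your dictionary this would send $y_{s,q}$ to the product $\prod_{p\in\sigma(q)} y_{s,p}$, which is not of the form $y_{s,\pi(q)}$ and does not even preserve the $R$-degree of a monomial. (Note also that this $R$-degree equals the number of nonzero columns, i.e.\ the size of the union of the supports, which is unbounded, so the image of $\Tinf^{\leq d}$ lies in no bounded-degree part of $R$.) Hence the dictionary is only $\Inc(\NN)$-equivariant, and the statement you would then need to import is that the span of the relevant monomials in $R$ is Noetherian as an $\Inc(\NN)$-\emph{module}. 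That is false: $\Inc(\NN)$ (like $\Sinf$) preserves the number of variables occurring in a monomial, so already for $d=1$ the $\Subs(\NN)$-stable subspace $\Tinf^1$ is not spanned by finitely many $\Inc(\NN)$-orbits, since an orbit fixes the number of nonzero letters of $w$. Theorem~\ref{thm:RlN} is an \emph{ideal}-Noetherianity statement; in an ideal one may multiply by new variables, which is precisely the duplication that $\Subs(\NN)$ provides and $\Inc(\NN)$ lacks, but that multiplication has no counterpart under your dictionary (inserting a column type that does not occur in a given monomial is not a $\Subs(\NN)$-translate of it), so the ideal version cannot be transported back either.

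This is why the paper does not reduce the proposition to Theorem~\ref{thm:RlN}. Your encoding of a degree-at-most-$d$ monomial as a finite-support word over the finite alphabet of column types is the same as the paper's ($e\times\NN$ tables whose columns are letters of $\{0,\ldots,n-1\}^e$), but the paper then proves a genuinely new combinatorial fact: the relation $w\leq w'$ iff $w'=\sigma w$ for some $\sigma\in\Subs_<(\NN)$ is a well-quasi-order on finite-support words, established by induction on the alphabet size, where Higman's lemma (the combinatorial core behind Theorem~\ref{thm:RlN}) only supplies an auxiliary $\Inc(\NN)$-embedding $\pi$ whose gaps are then filled by choosing sets $\sigma(j)$ with more than one element; a leading-monomial argument with a $\Subs_<(\NN)$-compatible well-order on monomials then finishes the proof. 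This duplication step is the missing idea in your proposal: the Noetherianity cannot be ``entirely imported'' from Theorem~\ref{thm:RlN}. A smaller, secondary point: even where Theorem~\ref{thm:RlN} does apply, it concerns stable ideals, and deducing finite generation of stable \emph{linear} subspaces of a bounded-degree piece requires a further (if routine) argument.
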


We prove the stronger statement, where $\Subs(\NN)$ is replaced by the
smaller monoid $\Subs_<(\NN)$. 

\begin{proof}
For any natural number $m$, define a partial order on infinite words over $\{0,1,\ldots,m\}$
of finite support by $w \leq w'$ if and only if there exists a
$\sigma \in \Subs_<(\NN)$ such that $\sigma w=w'$. By induction
on $m$, we prove that this is a well-quasi-order in the sense of
\cite{Kruskal72}. For $m=0$ there is only one word, and $\leq$ is
trivially a well-quasi-order. Suppose next that it is a well-quasi-order
for $m-1$, and consider a sequence $w_1,w_2,\ldots$ of words over
$\{0,1,\ldots,m\}$ of finite support. After passing to a subsequence we
may assume that all $w_a$ use exactly the same alphabet. If this alphabet
has less than $m+1$ elements, then by the induction hypothesis there exist
$a<b$ such that $w_a \leq w_b$, and we are done. Hence we may assume that
the common alphabet of all words is $\{0,1,\ldots,m\}$. For each $w_a$,
let $s_a$ be the symbol in $w_a$ whose {\em last} occurrence precedes
the last occurrences of all other symbols, that is, the tail of $w_a$
after the last occurrence of $s_a$ uses an alphabet of cardinality $m-1$.
By passing to a subsequence we may assume that all $s_a$ are equal,
and without loss of generality that they are all equal to $m$. 

Now there is a weaker partial order on words of finite support over the
alphabet $\{0,\ldots,m\}$, defined by $w \leq' w'$ if and only if there
exists a $\pi \in \Inc(\NN)$ such that the symbol at position $j$ of $w$
equals the symbol at position $\pi(j)$ of $w'$.  This partial order {\em
is} a well-quasi-order by Higman's lemma \cite{Higman52}. Hence after
passing to a subsequence we may assume that $w_1 \leq' w_2 \leq' \ldots$.

Returning to our main argument, for each $w_a$ let $w_a'$ denote the
word obtained from it by deleting the head up to and including the
last occurrence of $s_a=m$, so $w_a'$ is a word that uses exactly the
alphabet $\{0,\ldots,m-1\}$ ($w_a'$ is indexed by $1,2,3,\ldots$).
By the induction hypothesis, there exist $a<b$ such that $w_a' \leq
w_b'$. We claim that then also $w_a \leq w_b$. We construct $\sigma \in
\Subs_<(\NN)$ witnessing this fact as follows. Let $\pi \in \Inc(\NN)$
be a witness of $w_a \leq' w_b$, i.e., for all $j \in \NN$ we have
$(w_a)(j)=(w_b)(\pi(j))$; let $j_a,j_b$ be the positions of the last
occurrences of $m$ in $w_a,w_b$, respectively; and let $\gamma \in
\Subs_<(\NN)$ be a witness of $w_a' \leq w_b'$.  Then define $\sigma$ by
\[ 
\sigma(j)=\begin{cases}
	\{\pi(j)\} \text{ if } j<j_a,\\

	\{j_b\} \cup \{i \in [j_b-1] \mid w_b(i)=m 
	\text{ and } i \not \in \pi[j_a-1]\} \text{ if }
	j=j_a, \\

	\{j_b+i \mid i \in \gamma(j-j_a)\}
	\cup 
	\{i \in [j_b-1] \mid w_b(i)=w_a(j)
	\text{ and } i \not \in \pi[j_a-1] \} \\
	\quad \text{if $j>j_a$ and $j-j_a$ is the first position of
	$w_a(j)$ in $w'_a$, and}\\

	\{j_b+i \mid i \in \gamma(j-j_a)\}\\
	\quad \text{if $j>j_a$ and $j-j_a$ is not the first position
	of $w_a(j)$ in $w'_a$.}
\end{cases}
\]
A straightforward computation---see below for an example---shows that
$\sigma w_a = w_b$, so that $w_a \leq w_b$, as required. This shows that
$\leq$ is indeed a well-quasi-order.

We use this to prove that for every infinite sequence $u_1,u_2,\ldots$
of monomials in the $x_w$ of degree at most $d$ there exists $a<b$ and
$\sigma \in \Subs_{<}(\NN)$ such that $\sigma u_a=u_b$. Since there exists
a degree $e \leq d$ such that infinitely many of the $u_a$ have degree
equal to $e$, we may assume that all $u_a$ have the same degree $e$. To
each $u_a$ we then associate an $e \times \NN$ table $T_a$, whose rows
record the words $w$ corresponding to variables $x_w$ occurring in $u_a$,
including their multiplicities (in any order). The columns of $T_a$ form
a word of finite support over the alphabet $\{0,1,\ldots,n-1\}^e$. Hence
by the above, there exist $a<b$ and $\pi \in \Subs_<(\NN)$ such that
$\pi T_a=T_b$. But then also $\pi u_a=u_b$, as claimed. 

Now finally let $W$ be any $\Subs_<(\NN)$-submodule of the space of
polynomials in the $x_w$ of degree at most $d$. Choose any well-order
on monomials in the $x_w$ that is preserved by the $\Subs_<(\NN)$---for
instance, lexicographic with respect to the order on the variables defined
by $x_w \geq x_{w'}$ if $\sum_{j=1}^\infty w(j)n^j \geq \sum_{j=1}^\infty
w'(j) n^j$ (an inequality between the numbers represented by
$w$ and $w'$ in the $n$-ary number system). By the above, there exist $w_1,\ldots,w_l \in W$ such that
for all $w \in W \setminus \{0\}$ there exist $i\in [l]$ and $\pi \in
\Subs_<(\NN)$ for which the leading monomial of $\pi w_i$ (with respect
to the well-order) equals the leading monomial of $w$. This implies that
the $\Subs_<(\NN)$-orbits of $w_1,\ldots,w_l$ span $W$.
\end{proof}

\begin{ex}
Here is an example of the construction of $\sigma$ in the proof above.
Let $m=2$ and 
\begin{align*}
w_a&=0010212011 & j_a&=7 & w_a'&=011\\
w_b&=0020102012001011 & j_b&=10 & w_b'&=001011\\
\pi&=(1,2,5,6,7,9,10,11,13,15,17,18,19,\ldots)\\
\gamma&=(\{1,2,4\},\{5\},\{3,6\},\{7\},\{8\},\ldots)
\end{align*}
so that $\gamma w_a'=w_b'$ and $w_a(i)=w_b(\pi(i))$ for all $i$ (of
course $\gamma$ and $\pi$ with these properties are in general not
unique). Then $\sigma$ above equals
\[ \sigma=(\{1\},\{2\},\{5\},\{6\},\{7\},\{9\},\{3,10\},
\{4,8,11,12,14\},\{15\},\{13,16\},\{17\},\{18\},\ldots). \]
Note how all the gaps in $\im \pi$ are filled by the second half
of $\sigma$.
\end{ex}

We now have the following immediate corollary, which makes our conjecture
more plausible.  In fact, Andrew Snowden pointed out to us that his
beautiful theory of $\Delta$-modules \cite{Snowden10} yields a similar
equivalence.

\begin{cor}
Conjecture~\ref{conj:Subs} is equivalent to the statement that the ideal
of $\Xinf$ is generated by polynomials of bounded degree. \qed
\end{cor}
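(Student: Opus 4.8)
The plan is to prove the two implications of the equivalence separately; both are essentially formal, drawing only on three facts already in hand: that $\Subs(\NN)$ acts on $\Tinf$ by \emph{degree-preserving} algebra homomorphisms (since $\sigma x_w=x_{\sigma w}$ sends each variable to a variable, monomials go to monomials of the same degree), that the ideal of $\Xinf$ is $\Subs(\NN)$-stable, and the Noetherianity statement just proved for the space of polynomials of bounded degree.

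First I would record the elementary reformulation that ``the ideal $I$ of $\Xinf$ is generated by polynomials of bounded degree'' is literally equivalent to ``there is a $d$ with $I=(I_{\le d})$'', where $I_{\le d}$ denotes the $K$-linear span of all elements of $I$ of degree at most $d$: if $I$ is generated by polynomials $g_\alpha$ of degree at most $d$ then each $g_\alpha\in I_{\le d}$, so $I\subseteq(I_{\le d})\subseteq I$, and the reverse containment is trivial.

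For the direction Conjecture~\ref{conj:Subs} $\Rightarrow$ bounded degree, suppose $I$ is generated by the $\Subs(\NN)$-orbits of finitely many polynomials $f_1,\dots,f_N$. Because the action preserves degree, every element of $\bigcup_i\Subs(\NN)f_i$ has degree at most $d:=\max_i\deg f_i$, so $I$ is generated in degree at most $d$. For the converse, suppose $I=(I_{\le d})$. By the stability lemma $I$ is $\Subs(\NN)$-stable, and since the action preserves degree the truncation $I_{\le d}$ is a $\Subs(\NN)$-submodule of the space of polynomials in $\Tinf$ of degree at most $d$. By the preceding Proposition that space is a Noetherian $\Subs(\NN)$-module, so $I_{\le d}$ is the span of the $\Subs(\NN)$-orbits of finitely many $f_1,\dots,f_N\in I$. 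The ideal generated by $\bigcup_i\Subs(\NN)f_i$ then contains $I_{\le d}$, hence all of $I$, and is contained in $I$ because each $f_i\in I$ and $I$ is $\Subs(\NN)$-stable; so it equals $I$, which is exactly Conjecture~\ref{conj:Subs}.

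I do not expect a genuine obstacle here. The only point needing a moment's care is the reformulation in the second paragraph, together with the verification that $I_{\le d}$ is $\Subs(\NN)$-stable; this is precisely where the stability of the ideal of $\Xinf$ and the degree-preservation of the $\Subs(\NN)$-action are used, so that the Noetherianity Proposition can be brought to bear. Everything else is bookkeeping, and in particular no new geometry or combinatorics is required.
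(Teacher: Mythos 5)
Your proof is correct and is exactly the argument the paper leaves implicit by marking the corollary with \qed: degree-preservation of the $\Subs(\NN)$-action gives one direction, and the other follows by applying the Noetherianity proposition to the $\Subs(\NN)$-stable truncation $I_{\le d}$ (stable thanks to the lemma that the ideal of $\Xinf$ is $\Subs(\NN)$-stable). No discrepancies with the paper's intended route.
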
 

Finally, we point out that the action of $\Subs_<(\NN)$ on the coordinate
ring $\Tinf$ allows for {\em equivariant Gr\"obner Basis} methods as in
\cite{Brouwer09e}. We conjecture that for a fixed value of $k$ (and of
$n>k$) Conjecture can in principle be proved by a finite equivariant
Gr\"obner basis computation. It would be interesting to try and do so
for the GSS-conjecture, even if the computer proof would not be nearly
as elegant as Raicu's proof.

\appendix

\section{The ideal of $\Xinf[1]$ is not finitely generated}
As announced in Section~\ref{sec:Ideal}, we will now show that the
ideal of $\Xinf[1]$ is not $\Ginf$-finitely generated, at least in
characteristic zero.
We write $\Tinf^i$ for the space of elements of
degree $i$ in $\Tinf$, and throughout $K$ is algebraically closed of characteristic zero.
Let $H_p \subset G_p$ be the connected component of
$1$ (ie.\ $H_p = \GL(V)^p$), and let $\Hinf \subseteq \Ginf$ be their union.
Similarly $\Tinf$ denotes the union of all $D^p$ where $D$ is the group of
diagonal matrices in $H_p$, and finally, $\Binf$ is the union of all $B^p$ with
$B$, contrary to before, the group of \emph{lower} triangular matrices in $H$.
In order to use representation theory, we need to use lower triangular matrices
to make sure that the embedding $T_p \mapsto T_{p+1}$ (given by tensoring with
$x_0$) maps eigenvectors of $B_p$ to eigenvectors for $B_{p+1}$. (In fact the
union of upper triangular matrices in $\Hinf$ has no eigenvectors at all in
$\Tinf$.)  
Next, let $X = X(\Dinf)$ be the character group of $\Dinf$ which we may
identify with $X(D)^\NN$ (and recall that $X(D) = \ZZ^n$, generated by
the weights of the $e_i$, which we will denote by $\varepsilon_i$).
We call a $\Hinf$-representation $M$ a \emph{lowest weight module} of weight $\lambda \in
X$, if $M$ is generated by an eigenvector of $\Binf$ that has $\Dinf$-weight
$\lambda$.
For example, $\Tinf^1$ is itself a lowest weight module with lowest
weight $(\varepsilon_0,\varepsilon_0,\dots)$ and eigenvector $x_0$ for
$\Binf$.

If $M$ is any representation for $\Hinf$, we write $M[k]$ for the
representation obtained by shifting $k$-places. That is, $h =
(h_1,h_2,\dots)\in \Hinf$ acts on $m$ as $(h_{k+1},h_{k+2},\dots)m$.
Similarly, for a weight $\lambda \in X$, we define $\lambda[k]$ to be
the weight obtained from $X$ by inserting $k$ zeros in the beginning.
If $M$ is a lowest weight module with lowest weight $\lambda$, then so
is $M[k]$ and its weight is $\lambda[k]$.

For the purposes of this appendix, an \emph{ind-finite representation} $W$ of $\Hinf$
is a directed union $W= \bigcup_p W_p$, where $W_1 \subseteq W_2 \subseteq
\dots $ are nested finite-dimensional vector spaces and $W_p$ is a $H_p$-representation such that the inclusions $W_p \to W_{p+1}$ are $H_p$-equivariant, and moreover, 
any $B^p$-eigenvector is mapped to a $B^{p+1}$-eigenvector. 

\begin{lem}
Let  $W$ be an ind-finite $\Hinf$-representation. Then $W$ is semi-simple, i.e.\ generated by irreducible subrepresentations, and each simple submodule is a lowest weight module with a uniquely determined lowest weight.
\end{lem}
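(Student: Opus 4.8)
The plan is to reduce everything to the finite levels $W_p$, which are governed by classical reductive representation theory, and then to isolate the single point where the defining compatibility of an ind-finite representation is really used. First I would record the following facts about each $W_p$: since $\GL(V)^p$ is reductive and $K$ has characteristic zero, $W_p$ is a finite-dimensional semisimple $\GL(V)^p$-module, its simple summands are external tensor products $L(\mu_1)\boxtimes\cdots\boxtimes L(\mu_p)$ with $\mu=(\mu_1,\dots,\mu_p)\in X(D)^p$, and each such simple module is generated by its unique $B^p$-eigenline (the lowest weight line, of weight $\mu$). One then checks, via the isotypic decomposition, that \emph{any} $B^p$-eigenvector $v\in W_p$, of weight $\mu$ say, generates a simple submodule $\langle\GL(V)^pv\rangle\cong L(\mu)$: the components of $v$ in the various simple summands are themselves $B^p$-eigenvectors of weight $\mu$, so the summands involved are all isomorphic to $L(\mu)$ and $v$ lies in a single diagonal copy. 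Consequently $W_p$ is spanned by the $\GL(V)^p$-translates of its $B^p$-eigenvectors; and by the hypothesis that the inclusion $W_p\hookrightarrow W_{p+1}$ carries $B^p$-eigenvectors to $B^{p+1}$-eigenvectors (iterated up the tower), every $B^p$-eigenvector is in fact a $\Binf$-eigenvector. Hence $W$ is the sum of the $\Hinf$-submodules $\langle\Hinf v\rangle$ over all $\Binf$-eigenvectors $v$, and it will suffice to show that each such submodule is simple and is a lowest weight module with a well-defined lowest weight.

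The heart of the argument is that weights propagate uniquely up the tower. Fix a $B^p$-eigenvector $v\in W_p$ of weight $\mu\in X(D)^p$. Its image in $W_q$ ($q\ge p$) is a $B^q$-eigenvector and, being still a $D^p$-eigenvector of weight $\mu$, has $D^q$-weight of the form $(\mu,\nu_p,\dots,\nu_{q-1})$ for certain $\nu_r\in X(D)$. The claim to prove is that $\nu_r$ depends only on the weight $(\mu,\nu_p,\dots,\nu_{r-1})$ reached so far, not on $v$: if $v,v'$ had the same weight at level $r$ but produced different new components $\nu_r\ne\nu_r'$ at level $r+1$, then $v+v'$ would be a $B^r$-eigenvector whose image in $W_{r+1}$ has two distinct $D^{r+1}$-weight components, contradicting that $v+v'$ maps to a $B^{r+1}$-eigenvector. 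Therefore $v$ acquires a genuinely well-defined $\Dinf$-weight $\lambda\in X$ extending $\mu$, it generates inside each $W_q$ the simple module $L(\lambda|_{D^q})$, and passing to the union gives $\langle\Hinf v\rangle=\bigcup_{q\ge p}L(\lambda|_{D^q})$.

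It then remains to harvest the consequences. If $0\ne N'\subseteq\langle\Hinf v\rangle$ is an $\Hinf$-submodule, then for each $q$ the intersection $N'\cap W_q$ is a $\GL(V)^q$-submodule of the simple module $L(\lambda|_{D^q})$, hence is $0$ or all of it; since $N'\ne 0$ this forces $v\in N'$ once $q$ is large, so $N'=\langle\Hinf v\rangle$, proving simplicity, and $\langle\Hinf v\rangle$ is a lowest weight module of weight $\lambda$ by construction. For the uniqueness of the lowest weight I would argue: an arbitrary simple submodule $N$ contains a $\Binf$-eigenvector (take a $B^q$-eigenvector of $N\cap W_q$ for $q$ large, which is a $\Binf$-eigenvector by the hypothesis), whence $N=\langle\Hinf v\rangle$ for such a $v$; if $v'$ is any other $\Binf$-eigenvector generating $N$, then for all large $q$ both $v$ and $v'$ lie on the one-dimensional lowest weight line of the simple $\GL(V)^q$-module $N\cap W_q$, so $\lambda|_{D^q}$ is the common $D^q$-weight of $v$ and $v'$, and letting $q$ grow shows $v$ and $v'$ have the same $\Dinf$-weight. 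The only non-routine step is the uniform-propagation claim of the second paragraph, and this is exactly where the compatibility axiom — that $B^p$-eigenvectors map to $B^{p+1}$-eigenvectors — is indispensable; all remaining ingredients are standard facts about rational representations of $\GL_n$ in characteristic zero.
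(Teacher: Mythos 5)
Your overall strategy is exactly what the paper has in mind (its own proof is skipped as ``immediate''), and most of the steps are correct: semisimplicity of each $W_p$ in characteristic zero, the fact that a $B^p$-eigenvector of weight $\mu$ generates a simple $H_p$-submodule isomorphic to $L(\mu)$, the observation that the ind-finiteness axiom makes every $B^p$-eigenvector of every $W_p$ a $\Binf$-eigenvector, and hence that $W$ is the sum of the submodules $\langle\Hinf v\rangle$ over $\Binf$-eigenvectors $v$. One remark before the main point: the ``uniform propagation'' claim that you call the heart of the argument is correct but not actually needed. A $\Binf$-eigenvector is in particular a $\Dinf$-eigenvector, so its weight $\lambda\in X$ is well defined with no propagation argument; all you use later is that $v$ remains a $B^q$-eigenvector at every level $q$, which is the ind-finiteness hypothesis verbatim.

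The genuine gap is in the last paragraph: both in the simplicity step and, more seriously, in the uniqueness step you treat $N\cap W_q$ (equivalently $\langle\Hinf v\rangle\cap W_q$) as if it were the simple module $\langle H_q v\rangle\cong L(\lambda|_{D^q})$, e.g.\ when you place $v$ and $v'$ on ``the one-dimensional lowest weight line of the simple $\GL(V)^q$-module $N\cap W_q$''. That simplicity is not automatic: a priori $\langle H_r v\rangle\cap W_q$ could be a strictly larger $H_q$-submodule than $\langle H_q v\rangle$ for $r>q$, so $N\cap W_q$ could have several simple summands; and the natural proof that this does not happen passes through precisely the uniqueness statement you are trying to establish, so as written the argument is circular at that point. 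The repair is short and sidesteps the claim entirely: work inside $\langle H_r v\rangle$ for large $r$ rather than inside $W_q$. For simplicity, given $0\neq w\in N'$, choose $r$ with $w\in\langle H_r v\rangle$; then $N'\cap\langle H_r v\rangle$ is a nonzero $H_r$-submodule of a finite-dimensional simple module, hence contains $v$, so $N'=\langle\Hinf v\rangle$. For uniqueness, given a second $\Binf$-eigenvector $v'\in N=\bigcup_r\langle H_r v\rangle$, choose $r$ with $v'\in\langle H_r v\rangle$; since $v'$ is in particular a $B^r$-eigenvector and the finite-dimensional simple module $\langle H_r v\rangle$ has a unique $B^r$-eigenline, namely $Kv$, we get $v'\in Kv$, hence the same $\Dinf$-weight. (This also shows a posteriori that $N\cap W_q=\langle H_q v\rangle$ is simple, so your assertion was true, just unproved.) With this repair your proof is complete and agrees with the paper's intended argument.
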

\begin{proof}
	This is an immediate consequence of the same fact that $W$ is
	ind-finite and the lemma is true for finite dimensional
	representations of $H_p$. We skip the details.
\end{proof}

Let $\lambda_0 = (2\varepsilon_0,2\varepsilon_0,\dots)\in X$. It is the
lowest weight of the span of squares of pure tensors in $\Tinf^2$ (in
the finite dimensional setting this would be called the
Cartan-component). We denote this sub-representation by $C$.

For any lowest  weight $\lambda$ we define its \emph{complexity} as the
number of positions where it differs from $\lambda_0$. For all lowest
weights in $\Tinf^2$, this is finite.
The group 
$\Sinf$ acts on $\Hinf$ by permuting factors and on $X$ by permuting
entries. Consequently, if $M
\subseteq \Tinf^2$ is a lowest-weight module of weight
$\lambda$, then
$\pi M$ is a lowest weight module of weight $\pi \lambda$; in
particular, this $\Sinf$-action preserves the complexity of these
weights.

Now let $x_vx_w$ be any monomial of degree $2$ and let  $M$ be the
$\Hinf$-module generated by $x_vx_w$.

\begin{lem}\label{lem:BOUNDED}The complexity of every lowest weight appearing in $M$ is at most
the number of positions where the words $v$ and $w$ differ. In
particular, it is bounded. 
\end{lem}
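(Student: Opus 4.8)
The plan is to analyze which $\Dinf$-weights can occur at all in the module $M = \Hinf \cdot x_v x_w$, and bound how far each can differ from $\lambda_0 = (2\varepsilon_0, 2\varepsilon_0, \dots)$. Since every lowest weight of a simple submodule of $M$ is in particular a weight of $M$, and complexity only counts positions of disagreement with $\lambda_0$, it suffices to bound the number of such positions uniformly over all weights occurring in $M$.

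First I would observe that the weight structure decouples across the $\NN$ positions. Write $v = (v_1, v_2, \dots)$ and $w = (w_1, w_2, \dots)$ (infinite words of finite support over $\{0, \dots, n-1\}$). The monomial $x_v x_w$ is a $\Dinf$-eigenvector of weight $\mu$ with $\mu_j = \varepsilon_{v_j} + \varepsilon_{w_j}$ at position $j$. The group $\Hinf = \bigcup_p \GL(V)^p$ acts factorwise, so the $\Hinf$-submodule generated by $x_v x_w$ is contained in the tensor product over $j$ of the $\GL(V)$-submodules of $S^2(V^*)$ (resp.\ the relevant degree-two piece) generated by the local factor $x_{v_j} x_{w_j}$. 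Concretely, at position $j$ the local $\GL(V)$-module generated by $x_{v_j}x_{w_j}$ has all of its weights of the form $\varepsilon_a + \varepsilon_b$ with $\{a,b\}$ determined by $\{v_j, w_j\}$ up to the $\GL(V)$-action; the key point is only that when $v_j = w_j = 0$, the local factor is $x_0^2$, whose $\GL(V)$-submodule is the Cartan piece $\langle x_0^2 \rangle$ with unique weight $2\varepsilon_0$. Hence at every position $j$ where $v_j = w_j = 0$, the $j$-th coordinate of every weight of $M$ equals $2\varepsilon_0$.

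Therefore any weight $\mu'$ of $M$ can differ from $\lambda_0$ only at positions $j$ where $(v_j, w_j) \neq (0,0)$, i.e.\ at positions in $\supp(v) \cup \supp(w)$. This already bounds complexity by $|\supp(v) \cup \supp(w)|$, which is finite; but the sharper claim counts only positions where $v$ and $w$ \emph{differ}. To get this I would note that at a position $j$ with $v_j = w_j = c$ for some $c \neq 0$, one may conjugate by a fixed $\GL(V)$-element (a permutation of the basis $e_0, \dots, e_{n-1}$ swapping $e_0$ and $e_c$) simultaneously in that factor, which is an isomorphism of $\Hinf$-modules carrying $x_v x_w$ to a monomial with $0$ in position $j$ and leaving $\lambda_0$ fixed in that coordinate; alternatively, and more directly, observe that the lowest weight of $M$ is obtained by letting $\Binf$ (lower triangular) act, and a factor $x_c^2$ with $c \neq 0$ is pushed down by $B$ to $x_0^2$, so the lowest weight has $2\varepsilon_0$ in that coordinate, and by the $\Sinf$-equivariance remark and semisimplicity every lowest weight appearing in $M$ agrees with $\lambda_0$ there as well. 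Either way, the positions of disagreement are confined to $\{j : v_j \neq w_j\}$.

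The main obstacle I anticipate is making the "local decoupling" of weights fully rigorous: one must be careful that the $\Hinf$-submodule generated by a monomial does sit inside the (restricted) tensor product of the local $\GL(V)$-submodules, and that the degree-two piece of $\Tinf$ is exactly $S^2$ of $\Tinf^1$ so that this tensor-product picture is literally correct. Once that bookkeeping is in place, the bound is immediate: the complexity of any lowest weight of $M$, being a weight of $M$, is at most the number of positions where $v$ and $w$ differ, which is finite since both words have finite support.
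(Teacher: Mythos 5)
There is a genuine gap, and it sits exactly at your ``key point''. The $\GL(V)$-submodule of $S^2(V^*)$ generated by $x_0^2$ is \emph{not} the line $\la x_0^2\ra$: since $\GL(V)$ carries $x_0$ to every nonzero linear functional and squares span the symmetric square in characteristic zero, that submodule is all of $S^2(V^*)$ (the Cartan component of $S^2(V^*)$ \emph{is} $S^2(V^*)$, which is irreducible; the line $\la x_0^2\ra$ is only $B$-stable). Consequently your factorwise containment, while true, only places $M$ inside a restricted tensor product of full copies of $S^2(V^*)$, whose weights deviate from $\lambda_0$ in arbitrarily many (finitely many, but unboundedly many) positions. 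This also shows that your opening reduction---``it suffices to bound all weights occurring in $M$''---cannot work. Concretely, take $v=w$ (zero positions of difference): then $M$ is the Cartan module $C$ spanned by all squares of pure tensors, and it contains $(x_1\otimes\cdots\otimes x_1\otimes x_0\otimes x_0\otimes\cdots)^2$ with $m$ factors $x_1$, a weight vector whose weight differs from $\lambda_0$ in $m$ positions, for every $m$. The lemma is nevertheless true for this $M$ because $C$ is irreducible with the single lowest weight $\lambda_0$; so the statement is genuinely about lowest weights ($\Binf$-eigenvectors generating simple submodules), and no bound on the full weight support of $M$ can prove it. Your fallback sentence (``a factor $x_c^2$ is pushed down by $B$ to $x_0^2$, and by $\Sinf$-equivariance and semisimplicity every lowest weight agrees with $\lambda_0$ there'') is a restatement of the claim, not an argument; the permutation trick for positions with $v_j=w_j=c\neq 0$ is a harmless normalization but does not repair the main step.

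What is actually needed---and what the paper does---is to exploit the lowest-weight condition on the tail. After an $\Sinf$-shift one may assume $v,w$ differ exactly on the first $k$ positions and agree afterwards; lifting to $\Tinf^1\otimes\Tinf^1\cong (V^{*\otimes k}\otimes V^{*\otimes k})\otimes(\Tinf^1[k]\otimes\Tinf^1[k])$, the generator becomes $(u_1\otimes u_2)\otimes(r\otimes r)$ with $r$ a pure tensor, and $r\otimes r$ generates the \emph{irreducible} shifted Cartan module $C[k]$ with lowest weight $\lambda_0[k]$. Hence $M$ embeds into $(V^{*\otimes k}\otimes V^{*\otimes k})\otimes C[k]$, an external tensor product over the product group (first $k$ factors of $\Hinf$ times the rest), so every lowest weight has the form $\omega+\lambda_0[k]$ with $\omega$ supported on the first $k$ positions; the irreducibility of the tail factor is precisely what pins the coordinates beyond position $k$ to $2\varepsilon_0$ for lowest weights even though other weights of $M$ stray there. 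Your proposal is missing this mechanism, and without it the claimed bound does not follow.
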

\begin{proof}
Any lowest weight vector in $M$ is in the image of a lowest weight
vector under the multiplication map $m\colon \Tinf^1 \otimes \Tinf^1 \to
\Tinf^2$ and in fact this vector may be assumed to be an element of the
module generated by $x_v \otimes x_w$. Next, the claim
is invariant under the $\Sinf$-action, so we may assume that $v,w$
differ precisely at the first $k$ positions and then coincide. 

The space 
$\Tinf^1$ is a direct limit of the tensor products $(V^*)^{\otimes p}$; 
after reordering we get that $\Tinf^1\otimes \Tinf^1$ is isomorphic (as an
$\Hinf$-representation) to \[(V^{*\otimes k}\otimes V^{*\otimes k}) \otimes
(\Tinf^1[k]\otimes \Tinf^1[k])\]
where $\Hinf$ acts by its first $k$ entries (diagonally) on $V^{*\otimes k}\otimes
V^{*\otimes k}$.
With this notation in place, $x_v \otimes x_w$ has the form $(u_1\otimes
u_2) \otimes (
r \otimes r)$ where $u_1,u_2 \in V^{*\otimes k}$ and $r$ is a pure tensor
in $\Tinf^1[k]$; hence this is also true for any $\Hinf$-translate of
$x_v \otimes x_w$. Here we call ``pure tensor'' an element that is the image
of a pure tensor in one of the $V^{*\otimes p}$.

Note that $r \otimes r$ in $\Tinf^1[k] \otimes \Tinf^1[k]$ generates an
irreducible $\Hinf$-representation of weight $\lambda_0[k]$, since $r\otimes r$ is in the
orbit of $x_0\otimes x_0$, and thus, a representation isomorphic to
$C[k]$ (under the map $m$). It follows that $M$ is isomorphic to a
sub-representation of $(V^{*\otimes k}\otimes V^{*\otimes k}) \otimes
C[k]$. Any lowest weight vector here is the tensor product of one in
$(V^{*\otimes k})\otimes V^{*\otimes k})$ and the one in $C[k]$. From
this it follows easily that any lowest weight in the module generated by
$x_v\otimes x_w$ (and hence $M$) has the form $\omega + \lambda_0[k]$
where $\omega$ is a lowest weight of
$V^{*\otimes k}\otimes V^{*\otimes k}$ (and hence corresponds to just $k$
weights of $D$ and all zeros afterwards).
It follows that the complexity is at most $k$ as claimed.
\end{proof}
The crucial observation is now the following lemma.
\begin{lem}
The weights of the lowest weight modules in  $\Tinf^2$ have 
arbitrarily high complexity.
In other words, for each integer $N$, there exits a lowest weight of complexity at least $N$.
\end{lem}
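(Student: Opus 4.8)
The plan is to exhibit, for every integer $N$, an explicit $\Binf$-eigenvector $v_p \in \Tinf^2$ whose $\Dinf$-weight has complexity $p$, with $p$ an even number $\geq N$; the $\Hinf$-submodule it generates is then a lowest weight module in $\Tinf^2$ of a weight of complexity $\geq N$. Concretely, I would start from the decomposition, valid in characteristic zero,
\[ S^2(A_1\otimes\cdots\otimes A_p)\;=\;\bigoplus_{\substack{J\subseteq[p]\\|J|\text{ even}}}\Bigl(\bigotimes_{i\in J}\Lambda^2A_i\Bigr)\otimes\Bigl(\bigotimes_{i\notin J}S^2A_i\Bigr), \]
applied with all $A_i=V^*$ to $\Tinf^2=S^2(\Tinf^1)\supseteq S^2((V^*)^{\otimes p})$ and with $J=[p]$ (here $p$ is even). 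The summand $(\Lambda^2V^*)^{\otimes p}$ is an irreducible $H_p$-module; since the lower-triangular $B$ has $x_0\wedge x_1$ (of $D$-weight $\varepsilon_0+\varepsilon_1$) as its eigenvector on $\Lambda^2V^*$, the lowest weight vector of this summand is $(x_0\wedge x_1)^{\otimes p}$. Translated into $S^2((V^*)^{\otimes p})$ this is the explicit element $v_p:=\sum_{T\subseteq[p]}(-1)^{|T|}x_{w_T}x_{w_{T^c}}$, where $w_T$ is the $\{0,1\}$-word equal to $1$ exactly on $T$; one may equally well take this sum as the starting point, which has the advantage that all remaining verifications become elementary.

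I would then check three things about $v_p$. \emph{Nonvanishing}: since $p$ is even, the terms for $T$ and $T^c$ are equal as monomials and have equal signs, so they add rather than cancel (the pair $\{\emptyset,[p]\}$ already contributes $2x_{0\cdots 0}x_{1\cdots 1}$), and $v_p\neq 0$ in characteristic $\neq 2$. \emph{It is a $\Binf$-eigenvector}: because $x_0$ is a $B$-eigenvector annihilated by the strictly-lower-triangular Lie algebra, appending $0$'s sends $B^p$-eigenvectors to $B^q$-eigenvectors for all $q\geq p$ (this is exactly why the appendix works with lower triangular matrices), so it suffices to see that $v_p$ is a $B^p$-eigenvector; the only lowering generator that acts nontrivially on words over $\{0,1\}$ is $E_{10}$ (sending $x_1\mapsto x_0$) in some coordinate $c\in[p]$, and substituting $S=T\setminus\{c\}$ in the half of the sum indexed by $T\ni c$ shows that $E_{10}^{(c)}v_p=0$. \emph{Its weight}: a direct computation of the $\Dinf$-action shows coordinate $c$ contributes the weight $\varepsilon_0+\varepsilon_1$ for $c\leq p$ and $2\varepsilon_0$ for $c>p$, so $v_p$ has weight $\lambda_p=(\varepsilon_0+\varepsilon_1,\ldots,\varepsilon_0+\varepsilon_1,2\varepsilon_0,2\varepsilon_0,\ldots)$ with $p$ copies of $\varepsilon_0+\varepsilon_1$; as $n>k=1$ we have $\varepsilon_1\neq\varepsilon_0$, so $\lambda_p$ differs from $\lambda_0$ in exactly the positions $1,\ldots,p$ and has complexity $p$.

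To finish, $\Hinf v_p\subseteq\Tinf^2$ is, by definition, a lowest weight module of weight $\lambda_p$ (and if one insists on a simple module, the preceding ind-finite semisimplicity lemma shows that any simple summand onto which $v_p$ has nonzero projection has lowest weight $\lambda_p$ as well, since projections of $\Binf$-eigenvectors are $\Binf$-eigenvectors of the same weight). Letting $p$ run over all even integers produces lowest weights of arbitrarily high complexity, which is the assertion. I do not expect a genuine obstacle here: the only point requiring a little care is the compatibility of the $\Binf$-eigenvector condition with the direct-limit inclusions $T_p^2\hookrightarrow T_q^2$, which is precisely what the lower-triangular convention in the appendix is arranged to guarantee; everything else is a finite, routine computation.
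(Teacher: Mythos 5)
Your proposal is correct and follows essentially the same route as the paper: both exhibit, for even $p$, the copy of $(\Lambda^2 V^*)^{\otimes p}$ inside $S^2((V^*)^{\otimes p})\subseteq \Tinf^2$, whose lowest weight $(\varepsilon_0+\varepsilon_1,\dots,\varepsilon_0+\varepsilon_1,2\varepsilon_0,2\varepsilon_0,\dots)$ has complexity $p$. The only difference is presentational: where the paper invokes standard representation theory (symmetric tensors fixed by the product of transpositions), you write down the explicit $\Binf$-eigenvector $v_p=\sum_{T\subseteq[p]}(-1)^{|T|}x_{w_T}x_{w_{T^c}}$ and check the eigenvector property and weight directly, which is a valid, if more hands-on, verification of the same fact.
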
 
\begin{proof} Note that $\Tinf^2$ is equivariantly isomorphic to the second
	symmetric power $S^2(\Tinf^1)$.  For any $k > 0$, the canonical map
	$S^2(V^{*\otimes k}) \otimes C[k] \to \Tinf^2$ is an equivariant
	embedding where $\Hinf$ acts on $S^2(V^{*\otimes k})$ by means of the
	projection $\Hinf\to H_{k}$.

Standard representation theory for $H_k$ shows that $S^2(V^{*\otimes k})$
contains an  $H_{k}$-sub-representation isomorphic to $(\bigwedge^2
V^*)^{\otimes k}$ whenever $k$ is even. To see this, we observe that on on $W =
V^{*\otimes k}\otimes V^{*\otimes k}$ the group $S_{2k}$ acts by permuting the
tensor factors.  This allows us to identify $S^2(V^{*\otimes k})$ with the
symmetric tensors in $W$ (those fixed by the product of $k$-transpositions
$(1\, k+1)(2\, k+2)\cdots (k-1\, 2k)$).  Then $(\bigwedge^2V^*)^{\otimes k}$ is
isomorphic to the subrepresentation given by those tensors $w$ in $W$ that
satisfy $(i \, k+i)w=-w$ for all $i$.  If $k$ is even, any such tensor is
symmetric.  The lowest weight for $B^k$ of this representation is
$(\varepsilon_0+\varepsilon_1,\dots,\varepsilon_0+\varepsilon_1)$ ($k$
entries). So $(\bigwedge^2 V^*)^{\otimes k} \otimes C[k]$ is an irreducible
representation of $\Tinf^2$ and its lowest weight is \[
(\underbrace{\varepsilon_0+\varepsilon_1,\varepsilon_0+\varepsilon_1,\dots,\varepsilon_0
+ \varepsilon_1}_k,2\varepsilon_0,2\varepsilon_0,\dots).  \] The complexity of
this weight is $k$. So for any $N$, there is a lowest weight of complexity $N$
or $N + 1$.  \end{proof} As a corollary we obtain the following result:
\begin{prop} $\Tinf^2$, as a $\Ginf$-module, is not finitely generated.
	Moreover, the ideal of $\Xinf[1] = \Yinf[1]$ is not $\Ginf$-finitely
	generated.  
\end{prop}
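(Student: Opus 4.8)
The plan is to derive both assertions from the three lemmas above, together with the fact that ind-finite $\Hinf$-representations are semisimple. I first show that $\Tinf^2$ is not a finitely generated $\Ginf$-module. Suppose it were, say $\Tinf^2 = \sum_{i=1}^{r} \Ginf f_i$. Each $f_i$ is a finite $K$-linear combination of degree-$2$ monomials $x_v x_w$. Writing $\Ginf = \Sinf\Hinf$ and using that $\Hinf$ is normal, the $\Ginf$-submodule generated by $f_i$ equals $\sum_{\sigma \in \Sinf}\Hinf(\sigma f_i)$, a sum of $\Hinf$-submodules each contained in a sum of $\Hinf$-modules generated by single monomials. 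By Lemma~\ref{lem:BOUNDED} those have lowest weights of bounded complexity, and $\Sinf$ preserves complexity, so every lowest weight occurring in $\Ginf f_i$ has complexity at most some $c_i$. Since $\Tinf^2$ is an ind-finite, hence semisimple, $\Hinf$-module, the equality $\Tinf^2 = \sum_i \Ginf f_i$ forces every simple submodule of $\Tinf^2$ to occur in some $\Ginf f_i$, hence to have lowest weight of complexity at most $\max_i c_i$. This contradicts the last lemma, which exhibits simple submodules of $\Tinf^2$ of arbitrarily high complexity.

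For the second assertion, let $I$ be the ideal of $\Xinf[1]=\Yinf[1]$ and $I^2 = I \cap \Tinf^2$ its degree-$2$ component. The key point is that the Cartan component $C \subseteq \Tinf^2$ introduced above is a finitely generated $\Ginf$-stable complement of $I^2$: it is generated, already as an $\Hinf$-module, by the single lowest weight vector $x_0^2$, and since at each finite level the degree-$2$ part of the homogeneous coordinate ring of the Segre variety of pure tensors is $(S^2 V^*)^{\otimes p}$, which in characteristic zero is the irreducible $\GL(V)^p$-representation with the same lowest weight $\lambda_0$ as $C$, the projection $\Tinf^2 \to \Tinf^2/I^2$ restricts to an isomorphism on $C$. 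Thus $\Tinf^2 = C \oplus I^2$, with $C$ a single $\Ginf$-orbit.

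Finally I conclude as follows. If $I$ were generated as a $\Ginf$-stable ideal by finitely many orbits, then — using that $\Xinf[1]$ is a nonempty cone that linearly spans $\Ainf$, so that $I$ is homogeneous with $I^0 = I^1 = 0$ — one may take finitely many homogeneous generators, each of degree $\geq 2$; the degree-$2$ elements of the ideal they generate are then exactly the $K$-span of the $\Ginf$-translates of the generators of degree exactly $2$, so $I^2$ would be a finitely generated $\Ginf$-module. Together with $\Tinf^2 = C \oplus I^2$ and the finite generation of $C$, this would make $\Tinf^2$ finitely generated, contradicting the first part. I expect the main obstacle to be the identification of $C$ with a single-orbit complement of $I^2$: this rests on describing the degree-$2$ part of the coordinate ring of the Segre variety and on characteristic zero, in order to realise $(S^2V^*)^{\otimes p}$ via squares of pure tensors and recognise it as the irreducible with lowest weight $\lambda_0$. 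The semisimplicity bookkeeping in the first part and the passage from ``not finitely generated as an ideal'' to ``not finitely generated as a module in degree $2$'' are comparatively routine.
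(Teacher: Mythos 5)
Your proof is correct, and its first half is essentially the paper's argument: bound the complexity of every lowest weight occurring in a finitely generated $\Ginf$-submodule of $\Tinf^2$ via Lemma~\ref{lem:BOUNDED} together with $\Sinf$-invariance of complexity and semisimplicity of ind-finite representations, then contradict the lemma producing lowest weights of arbitrarily high complexity. For the second assertion you take a genuinely different route. The paper reduces to finitely many homogeneous generators of degree exactly $2$ (using that the ideal is generated by $2\times 2$ flattening minors) and then shows directly that $I\cap\Tinf^2$ contains lowest weight modules of unbounded complexity, by proving that \emph{every} lowest weight module other than the Cartan component $C$ lies in $I$: a lowest weight vector $f$ of weight $\lambda\neq\lambda_0$ satisfies $f(e_0^{\otimes p})=0$, hence $f$ vanishes on $B^pe_0^{\otimes p}$, which is dense in $\Xp[1]{p}$. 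You instead import the classical characteristic-zero fact at each finite level that the degree-$2$ part of $T_p$ splits as $C_p\oplus I_p^2$, with quotient $(S^2V^*)^{\otimes p}$ the degree-$2$ part of the Segre coordinate ring, pass to the limit to get $\Tinf^2=C\oplus I^2$, and then reduce the second claim to the first: if $I$ were $\Ginf$-finitely generated then $I^2$ would be a finitely generated $\Ginf$-module (your homogeneity bookkeeping here is fine, and it has the small advantage of not needing degree-$2$ generation of $I$, only $I^0=I^1=0$), hence so would $\Tinf^2=C\oplus I^2$. Both routes establish the same structural statement---the paper remarks at the end that the ideal spans a complement to $C$---but the paper's Borel-orbit argument proves that complement statement from scratch and yields the extra information that all non-Cartan constituents lie in $I$, whereas yours leans on the known finite-level decomposition and on its compatibility with the embeddings $T_p\hookrightarrow T_{p+1}$ (which does hold, since squares of pure tensors map to squares of pure tensors and $I\cap T_p$ is the ideal of $\Xp[1]{p}$).

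One point you should make explicit: why the projection $\Tinf^2\to\Tinf^2/I^2$ is injective on $C$, i.e.\ why $C\cap I^2=0$. ``Same lowest weight'' alone does not exclude $C_p\subseteq I_p^2$ unless you also invoke multiplicity one of the Cartan isotype in $S^2((V^*)^{\otimes p})$; the quickest fix is to note that $x_0^2$ does not vanish on $\Xinf[1]$ (evaluate at $e_0^{\otimes p}$), so the irreducible $C_p$ cannot lie in $I_p^2$, and then irreducibility of the quotient forces $T_p^2=C_p\oplus I_p^2$. This is a minor, standard point and does not affect the validity of your argument.
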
 
\begin{proof} The $\Ginf$-module generated by finitely many elements of
	$\Tinf^2$ will be contained in the module generated by all
	monomials $x_vx_w$ appearing in any of the generators.  Since
	$\Sinf$ preserves the complexity of lowest weights, any
	appearing irreducible $\Hinf$-module will have a lowest weight
	of complexity at most equal to the highest number of places
	where the two words $v,w$ differ in any of the generating
	monomials by Lemma~\ref{lem:BOUNDED}. Since $\Tinf^2$ contains
	lowest weight module of arbitrary complexity, this proves the
	first claim. 
	
	As for the second, if the ideal $I$ of $\Xinf[1]$ were
	$\Ginf$-finitely generated it could  be generated  by finitely
	many  homogeneous
	elements of degree $2$. It therefore suffices to show that $I \cap
	\Tinf^2$
	contains lowest weight modules of arbitrarily high complexity.
	In fact, as in the finite dimensional case, every lowest weight
	module other than $C$ will be contained in the ideal: if $f$ is
	a lowest weight vector of weight $\lambda$, say. Then there is
	$p$ such that $f \in T_p$ and hence $f \in I$ if and only if $f$
	vanishes on $\Xp[1]{p}$. Set $v:= e_0^{\otimes p}$, the highest weight
	vector in $V^{\otimes p}$.
	Since $\lambda \neq 2\varepsilon_0$, $f \neq x_0^2$ and so $f(v)
	= 0$. But then also $f(bv) = \lambda(b)^{-1}f(v) = 0$ for all $b
	\in B^p$. Since $B^pv$
	is open and dense in $\Xp[1]{p}$, it follows that $f \in I$ and
	we are done. 

	In fact, this shows that as in the case of $p$ finite, the ideal spans a
	complement to $C$.
\end{proof}

%\bibliographystyle{plain}
%\bibliography{diffeq,draismajournal}

\end{document}